\newtheorem{theorem}{Theorem}[section]
\newtheorem{prop}[theorem]{Proposition}
\newtheorem{lem}[theorem]{Lemma}
\newtheorem{defi}[theorem]{Definition}
\theoremstyle{remark}
\newtheorem{rem}[theorem]{Remark}
\numberwithin{equation}{section}
\renewcommand{\Re}{\operatorname{Re}}
\newcommand*\diff{\mathop{}\!\mathrm{d}}
\DeclarePairedDelimiter\abs{\lvert}{\rvert}%
\DeclarePairedDelimiter\norm{\lVert}{\rVert}%
\newcommand{\tnorm}[1]{{\left\vert\kern-0.25ex\left\vert\kern-0.25ex\left\vert #1 
    \right\vert\kern-0.25ex\right\vert\kern-0.25ex\right\vert}}
\let\oldabs\abs
\def\abs{\@ifstar{\oldabs}{\oldabs*}}
\let\oldnorm\norm
\def\norm{\@ifstar{\oldnorm}{\oldnorm*}}
\newcommand*{\myemail}[1]{%
    \normalsize\href{mailto:#1}{#1}\par
    }
\titleformat{\section}[block]{\centering \scshape \large}{\thesection.}{0.3\baselineskip}{}
\titlespacing{\section}{0pt}{*5}{*2}
\titleformat{\subsection}[block]{\bfseries}{\thesubsection.}{.5em}{}
\titlespacing{\subsection}{0pt}{*2.5}{*1}
\titleformat{\subsubsection}[runin]{\itshape}{\normalfont \thesubsubsection.}{.5em}{}[.]
\titlespacing{\subsubsection}{0pt}{*2.5}{0.5em}
\title{On the stationary solution of the Landau-Lifshitz-Gilbert equation on a nanowire with constant external magnetic field}
\date{\vspace{-1cm}}
\author[]{Guillaume Ferriere}
\affil[]{Univ. Lille, Inria, CNRS, UMR 8524 - Laboratoire Paul Painlevé, F-59000 Lille, France \\ \myemail{guillaume.ferriere@inria.fr}}
\begin{document}

\maketitle

\begin{abstract}
    We consider an infinite straight ferromagnetic nanowire, with an energy functional $E$ with easy-axis in the direction {$e_1$} of the nanowire and a constant external magnetic field {$h_0 e_1$} along the same direction.
    The evolution of its magnetization is governed by the Landau-Lifshitz-Gilbert equation associated to $E$.
    {If $h_0 \neq 0$}, we prove the existence of a stationary solution, {its} uniqueness up to the invariances of the equation and the instability of {its orbit} with respect to the flow.
    {Such solution has the same limits at infinity and looks like a $2$-domain wall when $h_0$ is small.}
    This analytic work is completed by some numerical simulations which are discussed afterwards and gives interesting new insights of the behavior of the solutions, in particular regarding the stability of 2-domain wall structures and more generally the interactions between domain walls.
\end{abstract}

\noindent\textbf{Keywords:} Landau-Lifshitz-Gilbert equation, ferromagnetism, stationary solution, instability, numerical simulation.

\medskip

\noindent\textbf{AMS classification:} Primary : 35B35, 35B38; Secondary : 35B40, 35B20, 35B30, 35Q60, 78M35, 65N06, 37M05.

\section{Introduction}

\subsection{A model for a ferromagnetic nanowire} \label{sec:ferro_model}

We are interested in a model of a ferromagnetic nanowire (of infinite length) as a straight line $\mathbb{R} e_1 \subset \mathbb{R}^3$ where 
\begin{equation*}
    e_1= \begin{pmatrix} 
    1 \\ 0 \\ 0
    \end{pmatrix}, \quad
    e_2= \begin{pmatrix} 
    0 \\ 1 \\ 0
    \end{pmatrix}, \quad
    e_3 = \begin{pmatrix} 
    0 \\ 0 \\ 1
    \end{pmatrix}
\end{equation*}
is the canonical basis of $\mathbb{R}^3$. The magnetization $m = (m_1, m_2, m_3): \mathbb{R} \to \mathbb{S}^2$ of this nanowire takes its values into the unit sphere $\mathbb{S}^2\subset \mathbb{R}^3$,  and we introduce the energy functional
\begin{equation*}
E(m) = \frac{1}{2} \int_{\mathbb{R}} |\partial_x m|^2 + (1-m_1^2) \diff x,
\end{equation*}
where $x$ is the variable in direction $e_1$ of the nanowire.
We refer to \cite{Cote_Ignat__stab_DW_LLG_DM} where this model was derived from the full 3D system by $\Gamma$-convergence in a special regime.
We want to study the evolution of the magnetization under the Landau-Lifshitz-Gilbert flow associated to $E$, that is the equation:
\begin{equation} \tag{LLG} \label{eq:llg}
    \partial_t m = m \wedge H(m) - \alpha m \wedge ( m \wedge H(m) ),
\end{equation}
where now $m: I \times \mathbb{R} \to \mathbb{S}^2$ is the time dependent magnetization ($I$ is an interval of time of $\mathbb{R}$). In this equation, $\wedge$ designates the cross product in $\mathbb{R}^3$, $\alpha >0$ is the damping coefficient, and the magnetic field $H$ is given by
\begin{equation} \label{eq:def_H}
    H(m) = - \delta E (m) + h(t) e_1.
\end{equation}
The last term $H_{ext} (t) \coloneqq h(t) e_1$ is the applied magnetic field, whereas $\delta E (m)$ in the first term is the variation of the energy, which writes (recall that $m_1^2+m_2^2+m_3^2=1$)
\begin{equation} \label{eq:def_delta_E}
    \delta E (m) = - \partial_{xx}^2 m + m_2 e_2 + m_3 e_3.
\end{equation}
The function $h: I \to \mathbb{R}$ is the (given) intensity of an applied external field, oriented on the axis $e_1$. Classically, it solely depends on the time variable $t$.
In this article, we will assume that it is constant: $h (t) = h_0$.

The \eqref{eq:llg} flow is equivariant under the following set of transformations:
\begin{itemize}[label=$\bullet$]
\item translations in space $\tau_y m(x) \coloneqq m(x-y)$ for $y\in \mathbb R$,
\item rotations $R_\phi \coloneqq \begin{pmatrix} 
1 & 0 & 0 \\
0 & \cos \phi & -\sin \phi \\
0 & \sin \phi & \cos \phi
\end{pmatrix}$ around the axis $e_1$ with angle $\phi\in \mathbb R$.
\end{itemize}
We define the group $G \coloneqq \mathbb R \times \mathbb R /2\pi \mathbb Z$ which naturally acts on functions $w: \mathbb R \to \mathbb R^3$ as follows: if $g= (y, \phi) \in G$, $g.w \coloneqq R_\phi \tau_y w = \tau_y R_\phi w$.
The action of $G$ preserves $\mathbb S^2$ valued functions, and so acts on magnetizations; it also extends naturally to functions of space and time, for which it preserves solutions to \eqref{eq:llg}. Also, we endow $G$ with the natural quotient distance over $\mathbb R^2$:
\begin{equation*}
    \forall g = (y,\phi) \in G,\quad \abs{g} \coloneqq \abs{y} + \inf \{ \abs{\phi+2k \pi}, k \in \mathbb Z \}.
\end{equation*}

{We also point out that \eqref{eq:llg} is invariant by reversal in space $m(x) \leftrightarrow m(-x)$. Moreover, for any solution $m = (m_1, m_2, m_3)$ to \eqref{eq:llg} with external magnetic field $h (t) \, e_1$, $(- m_1, m_2, -m_3)$ is solution to \eqref{eq:llg} with external magnetic field $- h(t) \, e_1$.}

\subsection{Functional spaces and Cauchy theory}

We define for $s \ge 1$ the spaces
\begin{align*}
\mathcal H^s &:= \{ m=(m_1, m_2, m_3) \in \mathscr C(\mathbb R, \mathbb S^2) \, | \,  \norm{m}_{\mathcal H^s} < +\infty \}\\ 
\nonumber &\textrm{with}\quad \norm{m}_{\mathcal H^s}:= \norm{m_2}_{L^2} + \norm{m_3}_{L^2} + \norm{m}_{\dot H^s}.
\end{align*}
The $\mathcal H^s$ spaces are modelled on the usual Sobolev spaces $H^s$ (see Section \ref{subsec:gen_notations} for general notations), but adapted to the geometry of the target manifold $\mathbb S^2$ and to the energy functional $E$: the main point is that $\abs{m_1} \to 1$ at $\pm \infty$ so that $m_1 \notin L^2$. The space $\mathcal H^1$ corresponds to the set of finite energy configurations $E(m) < +\infty$, in which case the energy gradient $\delta E(m)$ {belongs to} $H^{-1}$.

The Cauchy theory of \eqref{eq:llg} has been a challenging question, prompting numerous studies. Notably, Alouges and Soyeur \cite{Alouges_Soyeur__weak_LLG} investigated weak solutions, Lakshmanan and Nakamura \cite{Lakshmanan_Nakamura__LLG} employed the stereographic projection to transform \eqref{eq:llg} into a quasilinear dissipative Schrödinger equation, and Gutiérrez and de Laire \cite{Gutierrez_deLaire__Cauchy_LLG} delved into the Cauchy problem in the BMO space.
The following well-posedness result, quoted from \cite{Cote_Ignat__stab_DW_LLG_DM}, may not be the most optimal solution for initial data in $\mathcal{H}^s$ with $s \geq 1$, as discussed in \cite[Section~4]{Cote_Ignat__stab_DW_LLG_DM} {(see the footnote at the end of the proof of Theorem 4.1 in this article)}. Despite this, it suffices for our purpose and, notably, yields an energy dissipation identity of significant interest.
{In this result and in the rest of the paper, we denote by $\cdot$ the scalar product in $\mathbb{R}^3$.}

\begin{theorem}[Local well-posedness in $\mathcal H^s$] \label{th:lwp} Let $\alpha >0$ and $h \in L^\infty((0,+\infty), \mathbb R)$. Assume $s \ge 1$ and $m_0 \in \mathcal H^s$. Then there exist a maximal time $T_+= T_+(m_0) \in (0, +\infty]$ and a unique solution $m \in\mathscr C([0, T_+), \mathcal H^s)$ to \eqref{eq:llg} with initial data $m_0$. 
Moreover,
\begin{enumerate}
\item if $T_+ <+\infty$, then $\| m(t) \|_{\mathcal H^1} \to +\infty$ as $t \uparrow T_+$;
\item \label{item:continuity_flow} for $T<T_+$ (with $T_+$ finite or infinite), the map $\tilde m_0\in \mathcal H^s \to \tilde m\in \mathscr C([0,T], \mathcal H^s)$ is continuous in a small $\mathcal H^s$ neighbourhood of $m_0$ (for every initial data $\tilde m_0$ in that neighborhood, the maximal time of the corresponding solution $\tilde m$ satisfies $T_+(\tilde m_0) > T$);
\item if $s \ge 2$, one has the energy dissipation identity
: $t \mapsto E(m(t))$ is a locally Lipschitz function in $[0,T_+)$ (even $\mathscr C^1$ provided $h$ is continuous) and for all $t \in [0,T_+)$,
\begin{equation*}
\frac{d}{dt} E(m) = - \alpha \int ( |\delta E(m)|^2 - |m \cdot \delta E(m)|^2 )\, \diff x + \alpha h(t) \int (m \wedge e_1) \cdot (m \wedge \delta E(m)) \, \diff x.
\end{equation*}
\end{enumerate}
\end{theorem}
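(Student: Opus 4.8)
The starting point is to recast \eqref{eq:llg} as a quasilinear parabolic system. Using $\abs{m}=1$ and the identity $m\wedge(m\wedge H)=(m\cdot H)\,m-H$ together with \eqref{eq:def_H}--\eqref{eq:def_delta_E}, I would rewrite the equation in the form
\[
\partial_t m-\alpha\,\partial_{xx}^2 m-m\wedge\partial_{xx}^2 m=\alpha\,\abs{\partial_x m}^2 m+N(m),
\]
where $N(m)$ collects only zeroth-order terms (built from $m_1,m_2,m_3$ and $h$) and is therefore harmless. The crucial observation is that the principal operator $v\mapsto\alpha v+m\wedge v$ has, on the complexified tangent plane at $m$, eigenvalues $\alpha\pm i$ with real part $\alpha>0$; consequently the Fourier symbol of the second-order part behaves like $-(\alpha\pm i)\xi^2$, with real part $-\alpha\xi^2<0$, and the system is (quasilinear) parabolic, of complex Ginzburg--Landau type. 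This dissipative structure, rather than the stereographic projection (which is unavailable here since $\mathcal H^s$ contains domain-wall profiles where $m_1$ runs from $+1$ to $-1$ and thus meets a pole), is what I would build the whole argument on.

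For local existence and uniqueness I would run a fixed-point scheme on the frozen-coefficient linearisation: given a reference field $\bar m$, the operator $\alpha\partial_{xx}^2+\bar m\wedge\partial_{xx}^2$ generates an analytic evolution family (by the parabolicity above), and the associated Duhamel formula defines a map $\bar m\mapsto m$; the half-derivative gain of the smoothing semigroup absorbs the extra derivative carried by $\bar m\wedge\partial_{xx}^2 m$ and by $\alpha\abs{\partial_x m}^2 m$, so that for small $T$ this map is a contraction on a ball of the complete metric space of $\mathcal H^s$-configurations sharing the asymptotics of $m_0$. Uniqueness follows from the same contraction estimate. It then remains to check that the $\mathbb S^2$ constraint is propagated: writing $\rho:=\abs{m}^2$ and inserting the equation, $\rho-1$ solves a linear scalar parabolic equation with vanishing initial datum, whence $\rho\equiv1$ by uniqueness and $m$ is genuinely $\mathbb S^2$-valued.

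The remaining three assertions rest on $\mathcal H^s$ energy estimates. Differentiating the equation, pairing with $\partial_x^{2s}m$ and integrating by parts, the top-order term yields a coercive contribution $-\alpha\norm{m}_{\dot H^{s+1}}^2$ (again by parabolicity), while all other terms are controlled, after interpolation, by $P(\norm{m}_{\mathcal H^1})\,\norm{m}_{\dot H^s}^2$ for some polynomial $P$; Gronwall then bounds $\norm{m}_{\mathcal H^s}$ by a quantity depending only on $\norm{m}_{\mathcal H^1}$ over the time interval. This is exactly the bootstrapping that proves the blow-up criterion~(1): if $\norm{m(t)}_{\mathcal H^1}$ stayed bounded as $t\uparrow T_+<+\infty$, the $\mathcal H^s$ norm would remain bounded as well, and the short-time theory would extend the solution beyond $T_+$, a contradiction. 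Item~(2) is obtained by the same energy method applied to the difference of two solutions with nearby data, giving Lipschitz dependence and hence the continuity of the flow on the stated interval.

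For the dissipation identity~(3), assume $s\ge2$, so that $\delta E(m)=-\partial_{xx}^2 m+m_2 e_2+m_3 e_3\in L^2$ and, by the equation, $\partial_t m\in L^2$; then $t\mapsto E(m(t))$ is differentiable with $\frac{d}{dt}E(m)=\int\delta E(m)\cdot\partial_t m\,\diff x$ (adding a multiple of $m$ to $\delta E(m)$ is harmless since $\partial_t m\perp m$). Substituting \eqref{eq:llg} and $H(m)=h(t)e_1-\delta E(m)$, I would treat the two contributions separately. The precession term $\int\delta E(m)\cdot(m\wedge H)\,\diff x$ vanishes: the $H$-part by orthogonality $H\cdot(m\wedge H)=0$, and the $h e_1$-part because it equals $h\int\delta E(m)\cdot(m\wedge e_1)\,\diff x=h\int(m_2\,\partial_{xx}^2 m_3-m_3\,\partial_{xx}^2 m_2)\,\diff x$, which is zero after integration by parts. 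The damping term $-\alpha\int\delta E(m)\cdot\big((m\cdot H)m-H\big)\,\diff x$, expanded with $H=h e_1-\delta E(m)$ and the identity $(m\wedge e_1)\cdot(m\wedge\delta E(m))=\delta E(m)\cdot(e_1-m_1 m)$, reorganises into exactly the two announced terms; the regularity $s\ge2$ is what legitimises these integrations by parts on $\mathbb R$ (no boundary terms, integrands in $L^1$) and yields the Lipschitz, resp. $\mathscr C^1$, time-dependence. The genuine difficulty throughout is the quasilinear top-order term $m\wedge\partial_{xx}^2 m$: it forbids a naive semilinear Duhamel argument and must be tamed either by the frozen-coefficient/analytic-semigroup device above or by a parabolic regularisation with uniform estimates, and it is also what demands care with decay at infinity when justifying the manipulations in the energy identity.
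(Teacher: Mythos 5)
You should first be aware that this paper contains no proof of Theorem \ref{th:lwp} to compare against: the result is quoted verbatim from \cite{Cote_Ignat__stab_DW_LLG_DM} (see the sentence introducing it), and the proof lives in Section~4 of that reference. So your attempt can only be judged on its own merits. On those merits, your strategy is the standard and correct one for this equation, and the parts that can be checked line by line do check out: the reformulation $\partial_t m-\alpha\partial_{xx}^2 m-m\wedge\partial_{xx}^2 m=\alpha\abs{\partial_x m}^2m+N(m)$ is the correct quasilinear parabolic form of \eqref{eq:llg} (the spectrum of $\alpha I+m\wedge\cdot$ is $\{\alpha,\alpha\pm i\}$, so the principal symbol has real part $-\alpha\xi^2$); the propagation of the sphere constraint is correct, since $u=\abs{m}^2-1$ satisfies $\partial_t u=\alpha\partial_{xx}u+c(t,x)u$ with bounded $c$ (one checks that $m\cdot N(m)$ is itself proportional to $u$), whence $u\equiv 0$; and the computation of the dissipation identity in item~(3) is exactly right — the precession contribution reduces to $h\int(m_2\partial_{xx}m_3-m_3\partial_{xx}m_2)\diff x=0$, the Lagrange identity $(m\wedge e_1)\cdot(m\wedge\delta E(m))=\delta E(m)\cdot(e_1-m_1m)$ reorganises the damping term into the two announced integrals, and the orthogonality $\partial_t m\perp m$ is indeed what legitimises $\frac{d}{dt}E(m)=\int\delta E(m)\cdot\partial_t m\diff x$. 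Your remark that the stereographic projection is ill-suited to data whose range approaches both poles is also pertinent.

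There are, however, two genuine soft spots in the existence/continuity half of your sketch. First, the frozen-coefficient fixed point is not a contraction in the $\mathcal H^s$ topology as claimed: when you estimate the difference of the outputs for two reference fields $\bar m_1,\bar m_2$, the difference of coefficients multiplies $\partial_{xx}^2 m$, so one full derivative is lost, and the $t^{-1/2}$ smoothing of the semigroup (your ``half-derivative gain'') cannot absorb it. The standard cure is Kato's two-norm scheme — contraction in a weaker metric (say $L^\infty_T H^{s-1}$) on a closed ball of $L^\infty_T\mathcal H^s$, recovering the top regularity a posteriori — and this needs to be said, since it is precisely the quasilinear difficulty you yourself flag in your last sentence. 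Second, and for the same loss-of-derivative reason, item~(2) does not follow from ``the same energy method applied to the difference of two solutions'': that argument yields Lipschitz dependence only in $H^{s-1}$, and upgrading to continuity of the flow map in $\mathcal H^s$ up to a fixed $T<T_+$ requires an extra ingredient (parabolic smoothing combined with continuity at $t=0$, or a Bona–Smith-type regularisation of the data). Both points are fixable by standard parabolic technology, so your proposal is a sound blueprint rather than a complete proof; the blow-up criterion and the energy identity, by contrast, are argued correctly.
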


\subsection{Main result: existence, uniqueness and instability of stationary solutions}

Following the establishment of well-posedness, a natural question arises regarding the dynamics of \eqref{eq:llg}. In particular, the large-time behavior of solutions becomes an important subject, extensively explored in the context of many partial differential equations, as well as the study of stationary solutions, solitons and progressive waves as a first step.
It is indeed expected that generic solutions of this equation decompose into a superposition of independent structures called \textit{domain walls}. These domain walls are special structures, solution to \eqref{eq:llg} with limit $\pm e_1$ at $- \infty$ and the opposite at $+ \infty$, defined (up to the aforementioned invariances) by
\begin{equation*}
    w_*(x) \coloneqq \begin{pmatrix}  \cos ( \theta_*(x))  \\ \sin (\theta_*(x)) \\ 0 \end{pmatrix}  \quad
\textrm{with} \quad  \theta_*(x) \coloneqq 2\arctan (e^{x}).
\end{equation*}
This behavior aligns with the \textit{Soliton Resolution Conjecture}, commonly invoked in the context of dispersive PDEs. This conjecture (vaguely formulated) suggests that any global solution of a nonlinear dispersive PDE will eventually decompose at large time into a combination of non-scattering structures and a radiative term.

{The study of domain walls in this context has been started by Carbou and Labbé \cite{Carbou_Labbe__StaticWalls} without any external magnetic field, and improved with external magnetic field by Côte and Ignat \cite{Cote_Ignat__stab_DW_LLG_DM}.}
These authors proved that domain walls are stationary solutions of \eqref{eq:llg} when $h \equiv 0$, they are precessing explicitly with respect to the external magnetic field $h$, and they exhibit asymptotic stability under a smallness assumption on $h$. Moreover, they established an exponential convergence rate for both the solutions and the associated parameters (translations and rotations).
Building on this groundwork, Côte and the author of this paper \cite{Cote_Ferriere__2DW} delved into the study of \textit{2-domain walls}. These structures consist of two independent domain walls that move away from each other. While not being exact solutions to \eqref{eq:llg}, these authors proved that these structures are still asymptotically stable under a suitable external magnetic field.
These findings contribute to our understanding of the intricate behavior of solutions to \eqref{eq:llg}.

This paper is focused on the study of the stationary solutions of \eqref{eq:llg} in the presence of a constant external magnetic field $h (t) = h_0 \neq 0$.
As previously explained, the inclusion of stationary solutions is imperative for any comprehensive understanding of the (generic) large-time behavior.
However, to the best of the author's knowledge, such stationary solutions have not yet undergone dedicated numerical or analytic study, as they present different properties compared to domain walls.
The first result of this paper asserts the existence of these stationary solutions and, notably, establishes their uniqueness up to the invariances of the equation.

\begin{theorem} \label{th:stat_sol_unique}
    Let $h_0 \in \mathbb{R}$.
    \begin{itemize}
        \item If $h_0 \leq -1$, then the only stationary solution $w$ in $\mathcal{H}^1$ to \eqref{eq:llg} with $h (t) \equiv h_0$ satisfying $\lim_{- \infty} w = e_1$ is the constant solution $w \equiv e_1$.
        \item If $h_0 = 0$, then the only non-constant stationary solution $w$ in $\mathcal{H}^1$ to \eqref{eq:llg} with $h (t) \equiv 0$ satisfying $\lim_{- \infty} w = e_1$ is the domain wall $w \equiv w_*$, up to the invariances.
        \item If $h_0 \in (-1, 0)$ or $h_0 > 0$, then there exists a unique non-constant stationary solution $w_{h_0}$ in $\mathcal{H}^1$ to \eqref{eq:llg} with $h \equiv h_0$ such that $\lim_{{-} \infty} w_{h_0} = e_1$, up to the invariances.
    \end{itemize}
\end{theorem}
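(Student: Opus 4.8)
The plan is to reduce the stationary problem to a scalar ODE of pendulum type and analyse it by a first-integral (phase-plane) argument. A time-independent $w$ solves $w \wedge H(w) - \alpha\, w \wedge (w \wedge H(w)) = 0$. Writing $H = H(w)$, the two vectors $A = w \wedge H$ and $B = w \wedge (w \wedge H) = (w\cdot H)w - H$ are orthogonal (indeed $A\cdot B = 0$ since $A \perp H$ and $A \perp w$), so $A - \alpha B = 0$ forces $|A|^2 + \alpha^2|B|^2 = 0$, i.e. $A = B = 0$. Hence the stationary equation is equivalent to $H(w)\parallel w$, that is $w\wedge H(w) = 0$. Using $\delta E(w) = -\partial_{xx}^2 w + w_2 e_2 + w_3 e_3$ and $w_2 e_2 + w_3 e_3 = w - w_1 e_1$, this reads $\partial_{xx}^2 w + (w_1 + h_0)e_1 = \mu\, w$ with the Lagrange multiplier $\mu = w\cdot\partial_{xx}^2 w + (w_1+h_0)w_1 = -|\partial_x w|^2 + w_1^2 + h_0 w_1$, obtained by taking the scalar product with $w$ and using $|w| = 1$.

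Next I would show every such $w$ is planar up to the rotation invariance. The $e_2$- and $e_3$-components give $w_2'' = \mu w_2$ and $w_3'' = \mu w_3$, so $w_2$ and $w_3$ solve the same second-order linear ODE and their Wronskian $W = w_2 w_3' - w_3 w_2'$ is constant. Since $w \to e_1$ and (for finite energy) $\partial_x w \to 0$ at $-\infty$, we get $W \equiv 0$, so $w_2$ and $w_3$ are proportional; applying a rotation $R_\phi$ we may assume $w_3\equiv 0$. Writing the planar solution as $w = (\cos\theta, \sin\theta, 0)$, the system collapses to
\begin{equation*}
\theta'' = \sin\theta\,(\cos\theta + h_0), \qquad \theta(x)\to 0 \ \text{and} \ \theta'(x)\to 0 \ \text{as} \ x\to -\infty .
\end{equation*}
Multiplying by $\theta'$, integrating and using the limits at $-\infty$ to fix the constant yields the first integral
\begin{equation*}
(\theta')^2 = \sin^2\theta + 2h_0(1-\cos\theta) = 4\sin^2\!\big(\tfrac{\theta}{2}\big)\big(\cos^2\!\big(\tfrac{\theta}{2}\big) + h_0\big).
\end{equation*}

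The whole trichotomy then comes from reading off this zero-energy level set. If $h_0 \le -1$ the right-hand side is $\le 0$ and vanishes only at $\theta\in 2\pi\mathbb Z$, forcing $\theta\equiv 0$ and $w\equiv e_1$. If $h_0 = 0$ one recovers $\theta' = \pm\sin\theta$, whose orbit leaving $0$ is $\theta_*(x) = 2\arctan(e^{x})$, i.e. the domain wall $w_*$. If $h_0 > 0$ the factor $\cos^2(\theta/2)+h_0$ is strictly positive, so $\theta$ is monotone and the orbit is the heteroclinic connection from the hyperbolic equilibrium $\theta = 0$ to $\theta = 2\pi$; both limits of $w$ equal $e_1$ and the loop passes through $-e_1$. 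If $h_0\in(-1,0)$ the factor vanishes at $\theta_{\max} = 2\arccos(\sqrt{-h_0})\in(0,\pi)$, a genuine turning point (there $\theta'' = \sin\theta_{\max}(|h_0|-1)<0$), so the orbit is homoclinic to $\theta = 0$, rising to $\theta_{\max}$ and returning; again $w\to e_1$ at both ends, and $\theta_{\max}\to\pi$ as $h_0\to 0^-$, which is the $2$-domain-wall picture announced in the introduction. In each nontrivial case the linearisation at $\theta=0$ (and at $2\pi$) is $\theta''\approx(1+h_0)\theta$ with $1+h_0>0$, so $\theta$ decays exponentially, whence $w_2,w_3$ and $\partial_x w$ are exponentially small and $w\in\mathcal H^1$ (in fact $\mathcal H^s$ for all $s$).

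Finally, uniqueness up to the invariances reduces to uniqueness of the relevant orbit in the $(\theta,\theta')$-plane. The energy level is fixed by the condition at $-\infty$, and on that level the connected orbit through $(0,0)$ is unique; the residual freedom is the choice of origin along the orbit (translation $\tau_y$), the reflection $x\mapsto -x$ (space-reversal invariance) and the sign change $\theta\mapsto-\theta$ (which is the rotation $R_\pi$). I expect the delicate points to be this matching with the group $G$ and the exclusion of spurious boundary behaviour: for $h_0\le -1$ no nonconstant orbit exists because the first integral is nonpositive, while for $h_0>0$ one must exclude multi-loop orbits, which is immediate since $\theta=0$ and $\theta=2\pi$ are hyperbolic equilibria reached only as $x\to-\infty$ and $x\to+\infty$ respectively, whereas for $h_0\in(-1,0)$ the turning point $\theta_{\max}$ is attained at finite $x$. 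The reduction to a planar problem and the first integral are the conceptual core; the rest is standard phase-plane analysis combined with careful bookkeeping of the invariances.
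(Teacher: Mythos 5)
Your proposal is correct and follows essentially the same route as the paper's proof: reduction of the stationary equation to $w \wedge H(w) = 0$, planarity via a Wronskian argument, lifting to the pendulum ODE with first integral $(\theta')^2 = \sin^2\theta + 2h_0(1-\cos\theta)$, and a phase-plane classification of the zero level set combined with Cauchy--Lipschitz uniqueness and bookkeeping of the invariances. The few steps you assert rather than prove (pointwise decay of $\partial_x w$ and $\theta'$ at $-\infty$, attainment of the turning point at finite $x$ when $h_0 \in (-1,0)$, and that the space reflection stays within the $G$-orbit of the solution) are exactly the short, standard arguments the paper supplies.
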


\begin{rem}
    The second point of this theorem is already known thanks to \cite{Cote_Ignat__stab_DW_LLG_DM}, so we will focus on $h_0 \neq 0$.
\end{rem}

In the last case of this theorem, the unique (up to the invariances) non-constant stationary solution $w_{h_0}$ is not explicit unlike the domain wall $w_*$. However, it satisfies several interesting properties, explicited hereafter. In particular, these solutions are planar and can be lifted with the help of an angular function $\theta_{h_0}$ solution to an explicit ODE.

\begin{theorem} \label{th:stat_sol}
    \begin{itemize}
        \item Let $h_0 > 0$. {The function $w_{h_0}$ given by Theorem \ref{th:stat_sol_unique} satisfies $\lim_{+ \infty} w_{h_0} = e_1$ and} can be taken with values in $\mathbb S^1 \times \{ 0 \}$ and symmetric: $w_{h_0} = (\cos \theta_{h_0}, \sin \theta_{h_0}, 0)$ with $\theta_{h_0}$ smooth and satisfying
        \begin{itemize}
            \item $\theta_{h_0} (0) = \pi$ and $\partial_x \theta_{h_0} (0) = \sqrt{4 h_0}$,
            \item $\lim_{- \infty} \theta_{h_0} = 0$ and $\lim_{+ \infty} \theta_{h_0} = 2 \pi$,
            \item $\theta_{h_0} - \pi$ is odd,
            \item $\theta_{h_0}$ is a solution of the following ODEs:
                \begin{equation} \label{eq:ODE_theta_v1}
                    - \partial_{xx} \theta + \sin \theta \cos \theta + h_0 \sin \theta = 0,
                \end{equation}
                \begin{equation} \label{eq:ODE_theta_2_v1}
                    \partial_x \theta = \sqrt{\sin^2 \theta + 2 h_0 (1 - \cos \theta)},
                \end{equation}
            \item $\sin \theta_{h_0} \in L^2$, $1 - \cos \theta_{h_0} \in L^1$, $\partial_x \theta_{h_0} \in H^k$ for any $k \geq 1$,
            \item {Lastly, with $H$ given by \eqref{eq:def_H}}, there holds
            \begin{equation} \label{eq:expr_H_w_h_0_v1}
                H (w_{h_0}) {(x)} = \Lambda (x) \, w_{h_0} {(x)}, \qquad {\forall x \in \mathbb{R},}
            \end{equation}
            where
            \begin{equation} \label{eq:def_Lambda_v1}
                \Lambda \coloneqq - 2 \sin^2 \theta_{h_0} + 3 h_0 \cos \theta_{h_0} - 2 h_0.
            \end{equation}
        \end{itemize}
        \item Let $h_0 \in (- 1, 0)$. {The function $w_{h_0}$ given by Theorem \ref{th:stat_sol_unique} satisfies $\lim_{+ \infty} w_{h_0} = e_1$ and} can be taken with values in $\mathbb S^1 \times \{ 0 \}$ and symmetric: $w_{h_0} = (\cos \theta_{h_0}, \sin \theta_{h_0}, 0)$ with $\theta_{h_0}$ smooth and satisfying
        \begin{itemize}
            \item $\theta_{h_0} (0) = \arccos{(- 1 - 2 h_0)}$ and $\partial_x \theta_{h_0} (0) = 0$,
            \item $\lim_{\pm \infty} \theta_{h_0} = 0$,
            \item $\theta_{h_0}$ is even,
            \item $\theta_{h_0}$ satisfies \eqref{eq:ODE_theta_v1} and:
                \begin{equation} \label{eq:ODE_theta_2_v2}
                    \partial_x \theta = \operatorname{sgn} (x) \sqrt{\sin^2 \theta + 2 h_0 (1 - \cos \theta)},
                \end{equation}
            \item $\sin \theta_{h_0} \in L^2$, $1 - \cos \theta_{h_0} \in L^1$, $\partial_x \theta_{h_0} \in H^k$ for any $k \geq 1$,
            \item {Lastly}, there holds \eqref{eq:expr_H_w_h_0_v1} with \eqref{eq:def_Lambda_v1}.
        \end{itemize}
    \end{itemize}
\end{theorem}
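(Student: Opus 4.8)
The plan is to reduce the stationarity condition to a purely algebraic identity, then to a scalar ODE which I analyze by a phase‑plane argument. First I note that a stationary solution $w$ satisfies $w \wedge H(w) - \alpha\, w \wedge (w \wedge H(w)) = 0$. The two terms are orthogonal: $w\wedge H(w)$ is perpendicular to $\operatorname{span}(w,H(w))$, while $w\wedge(w\wedge H(w)) = (w\cdot H(w))\,w - H(w)$ lies in it. Taking the scalar product of the equation with $w\wedge H(w)$ thus gives $\abs{w\wedge H(w)}^2 = 0$, so $H(w)$ is everywhere parallel to $w$, i.e. $H(w) = \Lambda w$ with $\Lambda = w\cdot H(w)$. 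This is already \eqref{eq:expr_H_w_h_0_v1}; the explicit form \eqref{eq:def_Lambda_v1} drops out only at the very end, once the first‑order ODE is available.

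Next I establish planarity. Writing $H(w) = \partial_{xx} w - w_2 e_2 - w_3 e_3 + h_0 e_1$ and projecting $H(w)=\Lambda w$ onto $e_2$ and $e_3$ shows that both $w_2$ and $w_3$ solve the \emph{same} linear equation $u'' = (\Lambda+1)u$. Hence the Wronskian $W := w_2\,\partial_x w_3 - w_3\,\partial_x w_2$ is constant; the condition $\lim_{-\infty} w = e_1$ together with finiteness of the energy (which forces $\partial_x w\to 0$ at $-\infty$, since $\partial_x w\in H^1$ by ODE regularity) gives $W\equiv 0$. Therefore $(w_2,w_3)$ stays on a fixed line through the origin, and after applying a rotation $R_\phi$ — an invariance of the equation — I may assume $w_3\equiv 0$, so $w$ takes values in $\mathbb S^1\times\{0\}$ and lifts to a smooth angle $\theta$ with $w=(\cos\theta,\sin\theta,0)$.

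Substituting this ansatz into $w\wedge H(w)=0$ collapses to \eqref{eq:ODE_theta_v1}. Multiplying by $\partial_x\theta$ and integrating produces the conserved quantity $(\partial_x\theta)^2 - \sin^2\theta + 2h_0\cos\theta$; evaluating it at $-\infty$ (where $\theta\to 0$, $\partial_x\theta\to 0$) fixes its value and yields $(\partial_x\theta)^2 = \sin^2\theta + 2h_0(1-\cos\theta) = (1-\cos\theta)(1+\cos\theta+2h_0)$, which is \eqref{eq:ODE_theta_2_v1}. The two regimes are then separated by the sign of the factor $1+\cos\theta+2h_0$. For $h_0>0$ this factor is strictly positive, so $\partial_x\theta$ vanishes only at the zeros of $1-\cos\theta$: the orbit is a monotone heteroclinic from $0$ to $2\pi$, giving $\lim_{+\infty}\theta = 2\pi$ and hence $\lim_{+\infty} w_{h_0} = e_1$. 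For $h_0\in(-1,0)$ the factor vanishes at $\theta_0=\arccos(-1-2h_0)\in(0,\pi)$, a turning point, so the orbit is a homoclinic bump rising from $0$ to $\theta_0$ and back to $0$, whence $\lim_{\pm\infty}\theta = 0$ and the signed relation \eqref{eq:ODE_theta_2_v2}.

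The symmetries follow from the reflection invariance of \eqref{eq:ODE_theta_v1} combined with the uniqueness of Theorem \ref{th:stat_sol_unique}: for $h_0>0$ the map $\theta(x)\mapsto 2\pi-\theta(-x)$ preserves the equation and the boundary data, forcing $\theta-\pi$ odd, $\theta(0)=\pi$, and $\partial_x\theta(0)=\sqrt{4h_0}$; for $h_0\in(-1,0)$ the map $\theta(x)\mapsto\theta(-x)$ plays the same role, yielding evenness, $\theta(0)=\theta_0$ and $\partial_x\theta(0)=0$. Linearizing the first‑order ODE near $\theta=0$ gives $(\partial_x\theta)^2\sim(1+h_0)\theta^2$, hence exponential decay at rate $\sqrt{1+h_0}>0$; differentiating \eqref{eq:ODE_theta_v1} propagates this to all derivatives, which yields at once $\sin\theta_{h_0}\in L^2$, $1-\cos\theta_{h_0}\in L^1$, and $\partial_x\theta_{h_0}\in H^k$ for every $k$. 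Finally $\Lambda = w\cdot H(w) = w\cdot\partial_{xx}w - w_2^2 + h_0 w_1 = -(\partial_x\theta)^2 - \sin^2\theta + h_0\cos\theta$ (using $w\cdot\partial_{xx}w=-\abs{\partial_x w}^2$), and inserting \eqref{eq:ODE_theta_2_v1} reproduces exactly \eqref{eq:def_Lambda_v1}. I expect the main obstacle to be the phase‑plane bookkeeping of the third paragraph: checking that the $\mathcal H^1$/finite‑energy constraint selects precisely the energy level through $\theta=0$, and that on this level the orbit is genuinely monotone (for $h_0>0$) or has a single turning point (for $h_0\in(-1,0)$), so that the claimed limits at $+\infty$ — in particular $\lim_{+\infty}w_{h_0}=e_1$ — are truly attained.
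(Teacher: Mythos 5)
Your proposal is correct and follows essentially the same route as the paper: reduce stationarity to $H(w)=\Lambda w$, prove planarity via the vanishing Wronskian of $w_2,w_3$, lift to an angle $\theta$, derive \eqref{eq:ODE_theta_v1} and its first integral \eqref{eq:theta_en}, and run the phase-plane analysis separating the heteroclinic case $h_0>0$ from the homoclinic (turning-point) case $h_0\in(-1,0)$, with exponential decay and the $\Lambda$ formula obtained exactly as in the paper. The only cosmetic differences are that you fix the Wronskian constant by evaluating limits at $-\infty$ rather than by integrability, and you make explicit the reflection-plus-ODE-uniqueness argument for the oddness/evenness of $\theta_{h_0}$, which the paper leaves implicit in its Cauchy--Lipschitz normalization.
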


The proof of {the existence, the uniqueness and the properties} of such solutions is similar to the proof of \cite[Theorem~1.1]{Cote_Ignat__stab_DW_LLG_DM}. We adapted it to the case with an external magnetic field, which in turn implies different limits at infinity for {$w_{h_0}$} and $\theta_{h_0}$ {compared to the domain wall $w_*$}.

\begin{rem}
    Similar statements hold for
    {
    \begin{itemize}
        \item $\lim_{+ \infty} w_{h_0} = e_1$ (and the same assumptions on $h_0$),
        \item $\lim_{- \infty} w_{h_0} = - e_1$ (or $\lim_{+ \infty} w_{h_0} = - e_1$) and $h_0 < 0$ or $h_0 \in (0, 1)$.
    \end{itemize}}
    This can be easily seen by {the two symmetries of \eqref{eq:llg} mentioned at the end of Section \ref{sec:ferro_model}.}
    Therefore, we will focus on the case $\lim_{{-} \infty} m = e_1$ in this paper.
\end{rem}

\begin{rem}
    In Section \ref{sec:numerical}, we will see that these stationary solutions look like some kind of stationary $2$-domain walls when $h_0$ is small, roughly meaning that they perform 2 transitions between the constants $e_1$ and $- e_1$ (or almost). The uniqueness proved in Theorem \ref{th:stat_sol_unique} shows that there is no "stationary $k$-domain wall" (roughly defined as stationary solutions performing $k$ transitions between the constants $e_1$ and $- e_1$, or almost) for $k > 2$ in this context.
\end{rem}

With the existence and uniqueness of stationary solutions of \eqref{eq:llg} established, the natural progression of inquiry leads to the question of their stability. In other words, we now turn our attention to the discerning whether the stationary structures identified in the previous result are stable configurations, maintaining their form over time, or if they are sensitive to perturbations and evolve into different states. This quest for stability, whose definition is stated hereafter 
in the nonlinear setting, is crucial for characterizing the large-time behavior and practical implications of the identified stationary solutions.

\begin{defi} \label{def:nonlin_(un)stable}
    The stationary solution $w_{h_0}$ is \textit{(nonlinearly) orbitally stable} if for all $\varepsilon > 0$, there exists $\delta > 0$ such that, for any initial data $m_0 \in \mathcal{H}^{{1}}$ satisfying
    \begin{equation*}
        \norm{m_0 - w_{h_0}}_{H^1} < \delta,
    \end{equation*}
    the solution $m$ to \eqref{eq:llg} satisfies, for any $t > 0$,
    \begin{equation*}
        \inf_{g \in G} \norm{m(t) - g.w_{h_0}}_{H^1} < \varepsilon.
    \end{equation*}
    {It is said to be \textit{(nonlinearly) orbitally unstable} if it is not (nonlinearly) orbitally stable.}
\end{defi}

The second main result of this paper is the nonlinear instability of the stationary solution $w_{h_0}$.

\begin{theorem} \label{th:instability_simple}
    For any $h_0 > 0$ or $h_0 \in (-1, 0)$, $w_{h_0}$ is 
    (nonlinearly) orbitally unstable.
\end{theorem}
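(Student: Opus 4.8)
The plan is to exploit the dissipative structure of \eqref{eq:llg}. First I would introduce the \emph{total} energy incorporating the Zeeman contribution of the constant field, namely $\mathcal E(m) := E(m) - h_0\int_{\mathbb R}(m_1-1)\,\diff x$, which is well defined and continuous on a neighbourhood of $w_{h_0}$ in $\mathcal H^1$ (since $m_2,m_3\in L^2$ forces $1-m_1\in L^1$) and is invariant under $G$. With $H$ as in \eqref{eq:def_H} one has $\delta\mathcal E(m) = -H(m)$, so that along any solution $\frac{\diff}{\diff t}\mathcal E(m) = -\alpha\int_{\mathbb R}\abs{m\wedge H(m)}^2\,\diff x\le 0$, with equality if and only if $m$ is stationary; this is the clean Lyapunov identity behind Theorem~\ref{th:lwp}(3). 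By \eqref{eq:expr_H_w_h_0_v1}, $w_{h_0}$ is a critical point of $\mathcal E$ (indeed $m\wedge H=0$). The whole strategy is then: (i) show $w_{h_0}$ is \emph{not} a local minimiser of $\mathcal E$ modulo $G$, i.e. a saddle; (ii) start just below the critical energy level and use the strict decay of $\mathcal E$ to prevent any return to the orbit.

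The heart of (i) is the second variation. Restricting to the tangent space $\{u : u\cdot w_{h_0}=0\}$ and using the moving frame $e_\theta=(-\sin\theta_{h_0},\cos\theta_{h_0},0)$ and $e_3$, the quadratic form $Q(u)=\frac{\diff^2}{\diff\epsilon^2}\mathcal E\big|_{\epsilon=0}$ diagonalises, thanks to the planar symmetry of $w_{h_0}$, into an in-plane and an out-of-plane part governed respectively by the Schrödinger operators $L=-\partial_{xx}+\cos 2\theta_{h_0}+h_0\cos\theta_{h_0}$ and $M=-\partial_{xx}+\cos 2\theta_{h_0}+3h_0\cos\theta_{h_0}-2h_0$, where I have used \eqref{eq:ODE_theta_v1} and \eqref{eq:ODE_theta_2_v1} to simplify the potentials. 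Differentiating the profile ODE \eqref{eq:ODE_theta_v1} shows that $\partial_x\theta_{h_0}\in\ker L$ (the translation mode) and a direct check gives $\sin\theta_{h_0}\in\ker M$ (the rotation mode $e_1\wedge w_{h_0}=\sin\theta_{h_0}\,e_3$); both operators have essential spectrum $[1+h_0,+\infty)\subset(0,+\infty)$. The decisive point is a Sturm nodal argument: for $h_0>0$ the function $\theta_{h_0}$ increases from $0$ to $2\pi$, so $\partial_x\theta_{h_0}>0$ is nodeless (hence $L\ge 0$) whereas $\sin\theta_{h_0}$ vanishes and changes sign at $x=0$ (where $\theta_{h_0}=\pi$), so it is the \emph{second} eigenfunction of $M$, forcing a negative ground-state eigenvalue $\mu_-<0$; for $h_0\in(-1,0)$ the roles are exchanged, since $\theta_{h_0}$ is even ($\partial_x\theta_{h_0}$ odd, one node $\Rightarrow L$ has a negative eigenvalue) while $\theta_{h_0}\in(0,\pi)$ keeps $\sin\theta_{h_0}>0$ (so $M\ge 0$). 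In either case $Q$ has exactly one negative direction $\phi_-$ (the relevant nodeless ground state), $Q(\phi_-)=\mu_-\norm{\phi_-}_{L^2}^2<0$, while $\ker Q$ is exactly the two-dimensional tangent to the orbit $G.w_{h_0}$; thus $w_{h_0}$ is a nondegenerate saddle of Morse index one, and $\phi_-$ is orthogonal to the orbit.

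With $\phi_-$ at hand I would build initial data by projecting onto the sphere, $m_0=(w_{h_0}+\delta\phi_-)/\abs{w_{h_0}+\delta\phi_-}$, so that $\norm{m_0-w_{h_0}}_{H^1}=O(\delta)$ while $\mathcal E(m_0)=\mathcal E(w_{h_0})+\tfrac12\delta^2 Q(\phi_-)+O(\delta^3)<\mathcal E(w_{h_0})$ for $\delta$ small. Now argue by contradiction: if $w_{h_0}$ were orbitally stable (Definition~\ref{def:nonlin_(un)stable}), the corresponding solution $m(t)$ would stay in an $\varepsilon$-tube around $G.w_{h_0}$ for all $t$, and by dissipation $\mathcal E(m(t))\le\mathcal E(m_0)<\mathcal E(w_{h_0})$. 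Writing $m(t)=g(t).(w_{h_0}+r(t))$ with $r(t)$ orthogonal to the orbit and small, and splitting $r=\xi\phi_-+\eta$ with $\eta\perp\phi_-$, the coercivity of $Q$ off the unstable line gives $\mathcal E(m(t))-\mathcal E(w_{h_0})=\tfrac12\mu_-\xi^2+\tfrac12 Q(\eta)+\text{h.o.t.}$ with $Q(\eta)\ge c\norm{\eta}_{H^1}^2$; since the left-hand side is negative and bounded away from $0$, the coordinate $\xi(t)$ must stay bounded away from zero. Projecting \eqref{eq:llg} onto $\phi_-$ and using that the damping term contributes a factor $+\alpha\abs{\mu_-}\xi$ to $\dot\xi$ then yields a differential inequality forcing $\abs{\xi}$ to grow until $\norm{r}_{H^1}$ exceeds $\varepsilon$, contradicting confinement; hence $w_{h_0}$ is unstable.

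The main obstacle is this last nonlinear step. The negative eigenvalue only gives \emph{linear} instability, and upgrading it to nonlinear orbital instability requires (a) a clean modulation removing the two neutral directions generated by the non-compact group $G$ (translation and rotation), (b) uniform-in-time control of the quadratic and higher remainders in the expansion of $\mathcal E$ and in the projected equation, so that the growing-mode inequality for $\xi$ closes before the solution could escape by drifting along the orbit, and (c) enough regularity, via the parabolic smoothing inherent in the damping term $-\alpha m\wedge(m\wedge H)$, to make these projections and the coercivity estimate rigorous in $\mathcal H^1$. An alternative is a LaSalle-type argument: confinement together with finiteness of $\int_0^\infty\!\int\abs{m\wedge H}^2$ would produce almost-stationary times which, by the nondegeneracy above and the uniqueness in Theorem~\ref{th:stat_sol_unique}, must approach the orbit and hence drive $\mathcal E(m(t))\to\mathcal E(w_{h_0})$, contradicting $\mathcal E(m(t))<\mathcal E(w_{h_0})$; but this route demands precompactness of the trajectory, which is itself delicate on the line.
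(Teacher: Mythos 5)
Your structural setup coincides with the paper's: the total energy $E_{h_0}(m)=E(m)-h_0\int(m_1-1)\diff x$, the dissipation identity, the two Schr\"odinger operators (your $L$ and $M$ are exactly $L_1$ and $L_2$, since $\cos 2\theta = 1-2\sin^2\theta$), the kernel elements $\partial_x\theta_{h_0}$ and $\sin\theta_{h_0}$, the Sturm--Liouville nodal argument with the roles of the two operators swapping according to the sign of $h_0$, and the construction of sphere-valued initial data with $E_{h_0}(m_0)<E_{h_0}(w_{h_0})$. All of this matches Sections 3, 4 and 5.2 of the paper. The problem is the final step, which you yourself flag as the main obstacle: your proposal does not close the passage from the saddle-point structure to nonlinear orbital instability, and the specific mechanism you sketch would run into a real difficulty.

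Concretely, you propose to project \eqref{eq:llg} onto the unstable direction $\phi_-$ and claim that ``the damping term contributes a factor $+\alpha\abs{\mu_-}\xi$ to $\dot\xi$,'' yielding growth of $\xi$. But $\phi_-$ is an eigenfunction of $L_1$ (or $L_2$) \emph{alone}, not of the linearization of \eqref{eq:llg}, which is the non-self-adjoint operator $\begin{pmatrix} \alpha L_1 & -L_2 \\ L_1 & \alpha L_2 \end{pmatrix}$: the precession term $m\wedge H(m)$ couples the in-plane component $\nu$ and the out-of-plane component $\rho$, so the projection of $\partial_t m$ onto $\phi_-$ produces cross terms of the same order as the damping term, and no clean differential inequality for $\xi$ follows. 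The paper itself points out (Section 7) that locating the eigenvalues of this linearized operator with negative real part is an open problem; your route essentially presupposes a resolution of it. Your alternative LaSalle argument founders on precompactness, as you note.

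The paper's actual escape argument avoids mode-tracking and the linearized operator entirely; this is the key idea you are missing. After modulating so that $\eta = (-g).m - w_{h_0}$ satisfies $\langle \nu, \partial_x\theta_{h_0}\rangle = \langle \rho, \sin\theta_{h_0}\rangle = 0$ (Lemma \ref{lem:modulation}), two quantitative bounds hold as long as the solution stays in the $\varepsilon$-tube: first, the \emph{dissipation is coercive} — combining the expansion $\int\abs{m\wedge H(m)}^2 \approx \norm{L_1\nu}_{L^2}^2 + \norm{L_2\rho}_{L^2}^2$ (Lemma \ref{lem:expand_dissipation}) with the $H^2$-coercivity estimates of Lemmas \ref{lem:abstract_schro_op_1}--\ref{lem:abstract_schro_op_2}, the orthogonality conditions kill the only negative contributions, giving $\int\abs{m\wedge H(m)}^2 \geq \lambda\norm{\eta}_{H^1}^2$ (Lemma \ref{lem:summary_expand_dissipation}); second, the energy deficit is \emph{dominated} by $\norm{\eta}_{H^1}^2$, i.e. $\abs{\mathcal{E}(t)} \leq \tfrac1\lambda \norm{\eta}_{H^1}^2$ where $\mathcal{E}(t) = E_{h_0}(m(t)) - E_{h_0}(w_{h_0})$. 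Chaining these with Proposition \ref{prop:evolution_E_h_0} gives $\frac{\diff}{\diff t}\mathcal{E} \leq -\lambda\norm{\eta}_{H^1}^2 \leq \lambda^2 \mathcal{E}$, and since $\mathcal{E}(0)<0$ a Gronwall argument forces $\mathcal{E}(t) \leq e^{\lambda^2 t}\mathcal{E}(0)$: the energy deficit runs away exponentially. As the squared distance to \emph{any} point of the orbit dominates $\lambda\abs{\mathcal{E}(t)}$, the solution must leave the $\varepsilon$-tube before time $\frac{1}{\lambda^2}\ln\bigl(\varepsilon/(\lambda\abs{\mathcal{E}(0)})\bigr)$. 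In this scheme the negative eigenvalue is used exactly once — to show $V_\delta\neq\emptyset$ — and never during the evolution, which is what makes the argument close without any spectral information about the non-self-adjoint linearization.
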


The instability of these stationary structures implies that, to some extent, an initial data which is a perturbation of this stationary structure yields a solution which will not remain near the orbit of this structure (constructed by translation and rotation around $e_1$).
This explains why such solutions {have} not been observed in numerical simulations for very general initial data.

We point out that the stability of the domain wall $w_*$ (corresponding to the case $h_0 \equiv 0$) was treated by Carbou and Labbé \cite{Carbou_Labbe__StaticWalls} and completed by Côte and Ignat \cite{Cote_Ignat__stab_DW_LLG_DM}. On the other hand, as formally pointed out in \cite{Gou_2011}, the constant solution $e_1$ becomes linearly unstable when $h_0 < -1$, and it is therefore expected that it is also nonlinearly unstable (and similarly for $- e_1$ with $h_0 > 1$). 

\subsection{A more precise statement}

Actually, the previous statement can be precised thanks to another energy: the Zeeman energy, which is usually given by the integral of $- H_{\textnormal{ext}} \cdot m$ along the wire. Here, such an integral is not finite. However, for any $m \in \mathcal{H}^1$ satisfying $\lim_{\pm \infty} = e_1$ and using the fact that $m_2^2 + m_3^2 = 1 - m_1^2 = (1 - m_1) (1 + m_1)$, there holds $m_1 - 1 \in L^1$. Thus, we can define the Zeeman energy by
\begin{equation*}
    E_{h_0}^Z (m) \coloneqq - h_0 \int_{\mathbb{R}} (m_1 - 1) \diff x.
\end{equation*}
From this, we can define the total energy $E_{h_0}$ which takes into account the external magnetic field:
\begin{equation} \label{eq:mod_en}
    E_{h_0} (m) \coloneqq E(m) + E_{h_0}^Z (m) = \frac{1}{2} \int_{\mathbb{R}} |\partial_x m|^2 + (1-m_1^2) \diff x - h_0 \int_{\mathbb{R}} (m_1 - 1) \diff x.
\end{equation}
Its variation is related not only to the variation $\delta E$ of the initial energy $E$, but also to the effective magnetic field $H$:
\begin{equation} \label{eq:delta_E_h0}
    \delta E_{h_0} (m) = \delta E (m) - h_0 e_1 = - H(m).
\end{equation}

\begin{rem}
    Any stationary solution $w$ of \eqref{eq:llg} satisfies $w \wedge H(w) = 0$. {In particular, for $w \in \mathcal{H}^1$, it is equivalent to be a stationary solution to \eqref{eq:llg} or to be a critical point of $E_{h_0}$.} Therefore, $w_{h_0}$ is also the unique non-constant critical point of $E_{h_0}$ in $\mathcal{H}^1$, up to the invariances.
\end{rem}

This total energy plays an important role in the evolution of the solution.
In particular, the statement of Theorem \ref{th:instability_simple} can {be} refined by an estimate: the time during which the solution stays close to the orbit of the stationary solution $w_{h_0}$ is bounded {from} above by an explicit value.
This estimate is valid as long as the total energy of the initial data is smaller than the total energy of the stationary solution.
We also prove that such initial data can be found as close to $w_{h_0}$ as one wants.

\begin{theorem} \label{th:instability}
    {Let $h_0 \in (-1, 0)$ or $h_0 > 0$, and let $w_{h_0}$ be given by Theorem 1.2.}
    There exist $\lambda, \varepsilon > 0$ such that for any $0 < \delta < \varepsilon$, the following property holds. Define
    \begin{equation*}
        V_\delta \coloneqq \{ m_0 \in \mathcal{H}^2 \, | \, \norm{m_0 - w_{h_0}}_{H^1} < \delta, E_{h_0} (m_0) < E_{h_0} (w_{h_0}) \}.
    \end{equation*}
    Then $V_\delta \neq \emptyset$, and {for} any $m_0 \in V_\delta$, {the solution $m$ to \eqref{eq:llg} with initial value $m_0$ satisfies $\inf_{g \in G} \norm{m(t) - g.w_{h_0}}_{H^1} \geq \varepsilon$ for some $t > 0$. Furthermore, $t$ can be chosen} such that
    \begin{equation} \label{eq:upper_bound_min_time_instability}
        t \leq \frac{1}{\lambda^2} \ln{\biggl( \frac{\varepsilon}{\lambda \abs{E_{h_0} (m_0) - E_{h_0} (w_{h_0})}} \biggr)}.
    \end{equation}
\end{theorem}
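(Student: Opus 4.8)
The plan is to exploit the dissipation of the total energy $E_{h_0}$ together with the fact that $w_{h_0}$ is an \emph{unstable} critical point of $E_{h_0}$: initial data sitting strictly below the critical energy level must be pushed away from the orbit by the flow, at an exponential rate governed by the unstable mode. First I would record the dissipation identity for $E_{h_0}$. Since $\delta E_{h_0}(m) = -H(m)$ by \eqref{eq:delta_E_h0} and \eqref{eq:llg} reads $\partial_t m = m \wedge H - \alpha\, m \wedge (m \wedge H)$, a direct computation (legitimate for $m \in \mathcal H^2$, which is exactly why $V_\delta \subset \mathcal H^2$) using $\langle H, m\wedge H\rangle_{L^2}=0$ and the pointwise identity $m\wedge(m\wedge H) = (m\cdot H)\,m - H$ gives
\begin{equation*}
    \frac{d}{dt} E_{h_0}(m(t)) = \bigl\langle \delta E_{h_0}(m), \partial_t m \bigr\rangle_{L^2} = - \alpha \int_{\mathbb R} \abs{m \wedge H(m)}^2 \diff x \le 0 .
\end{equation*}
Both $E_{h_0}$ and the dissipation rate $m \mapsto \norm{m\wedge H(m)}_{L^2}^2$ are invariant under the action of $G$, a fact I will use repeatedly.

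Next, set $\mathcal D(t) \coloneqq E_{h_0}(w_{h_0}) - E_{h_0}(m(t))$. By the dissipation identity $\mathcal D$ is nondecreasing, with $\dot{\mathcal D}(t) = \alpha \norm{m(t)\wedge H(m(t))}_{L^2}^2$. The heart of the proof is the coercive lower bound
\begin{equation} \label{eq:plan_key}
    \norm{m\wedge H(m)}_{L^2}^2 \ge \frac{\lambda^2}{\alpha}\,\bigl( E_{h_0}(w_{h_0}) - E_{h_0}(m) \bigr) ,
\end{equation}
valid whenever $m$ lies in a small tube $\inf_{g\in G}\norm{m - g.w_{h_0}}_{H^1} < \varepsilon$ around the orbit. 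Granting \eqref{eq:plan_key}, one obtains $\dot{\mathcal D} \ge \lambda^2 \mathcal D$, hence $\mathcal D(t) \ge \mathcal D(0)\, e^{\lambda^2 t}$ for as long as $m$ stays in the tube. On the other hand, while $m$ stays in the tube, continuity of $E_{h_0}$ on $\mathcal H^1$ together with $\delta E_{h_0}(w_{h_0})=0$ gives $\abs{E_{h_0}(m)-E_{h_0}(w_{h_0})} = O(\varepsilon^2) \le \varepsilon/\lambda$ once $\varepsilon$ is chosen small, so $\mathcal D(t) \le \varepsilon/\lambda$. Comparing the two bounds forces $m$ to leave the tube (i.e. $\inf_g \norm{m(t)-g.w_{h_0}}_{H^1} \ge \varepsilon$) no later than $t = \frac{1}{\lambda^2}\ln\frac{\varepsilon/\lambda}{\mathcal D(0)}$, which is exactly \eqref{eq:upper_bound_min_time_instability} since $\mathcal D(0) = \abs{E_{h_0}(m_0)-E_{h_0}(w_{h_0})}$. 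As long as $m$ remains in the tube it is bounded in $\mathcal H^1$, so by the blow-up criterion of Theorem \ref{th:lwp} the solution is global and the argument is not vacuous.

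To prove \eqref{eq:plan_key} I would use the $G$-invariance to reduce to a neighbourhood of $w_{h_0}$ itself via a modulation lemma: in the tube one writes $m = g.(w_{h_0}+v)$ with $\norm{v}_{H^1} \lesssim \varepsilon$ and $v$ orthogonal to the two zero modes $\partial_x w_{h_0}$ and $e_1 \wedge w_{h_0}$ generated by the invariances. Writing $L$ for the second variation of $E_{h_0}$ at $w_{h_0}$ (the linearized operator whose spectral analysis already underlies Theorem \ref{th:instability_simple}) and Taylor expanding, $H(w_{h_0}+v) = -Lv + O(\norm{v}^2)$, so that $\norm{m\wedge H(m)}_{L^2}^2 = \norm{Lv}_{L^2}^2 + O(\norm{v}^3)$ while $E_{h_0}(w_{h_0})-E_{h_0}(m) = -\tfrac12 \langle Lv,v\rangle_{L^2} + O(\norm{v}^3)$. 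Decomposing $v$ on the spectral subspaces of $L$ and keeping only the negative eigenspace (of eigenvalue $-\mu<0$, whose existence is precisely the instability) in the lower bound for $\norm{Lv}_{L^2}^2$, while discarding its nonnegative positive-eigenspace contribution in the upper bound for the deficit, yields $\norm{Lv}_{L^2}^2 \ge 2\mu\bigl(-\tfrac12\langle Lv,v\rangle\bigr)$; choosing $\lambda^2 = \alpha\mu$ and a small enough tube to absorb the cubic errors gives \eqref{eq:plan_key}. Finally, $V_\delta \neq \emptyset$ follows by taking the sphere-valued normalization $m_0^\epsilon$ of $w_{h_0}+\epsilon\psi$, where $\psi$ is the negative eigenfunction of $L$: since $\psi$ is smooth and exponentially decaying, $m_0^\epsilon \in \mathcal H^2$, $\norm{m_0^\epsilon - w_{h_0}}_{H^1} = O(\epsilon) < \delta$, and $E_{h_0}(m_0^\epsilon) = E_{h_0}(w_{h_0}) - \tfrac{\mu}{2}\epsilon^2 + O(\epsilon^3) < E_{h_0}(w_{h_0})$.

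The main obstacle is the coercive estimate \eqref{eq:plan_key}. It rests on the precise spectral structure of $L$ --- an isolated negative eigenvalue with exponentially localized eigenfunction, a kernel reduced to the two symmetry modes, and positive essential spectrum coming from the constant limits $\pm e_1$ (here the sign conditions $h_0>0$ or $h_0\in(-1,0)$ are essential) --- together with a careful modulation argument to quotient out the invariances. A subtle point is that the energy deficit is controlled only by the negative-eigenspace part of $v$, since high-frequency perturbations \emph{raise} the energy and make the deficit negative; one must therefore lower-bound the dissipation rate through this same projection, using that $\norm{Lv}_{L^2}^2$ dominates both the energy deficit and, via ellipticity, the full $H^1$ size of the positive-eigenspace part, so that all cubic remainders are absorbed uniformly over the tube. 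This uniform remainder control is where most of the technical work lies.
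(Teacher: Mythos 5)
Your proposal is correct and takes essentially the same route as the paper: dissipation of the total energy, modulation to enforce orthogonality to the two symmetry modes, spectral analysis of the linearization (your abstract $L$ is the paper's $L_1 \oplus L_2$ in the moving frame $(w_{h_0}, n_{h_0}, e_3)$), a Gronwall argument yielding exponential growth of the energy deficit below the critical level and hence the exit-time bound \eqref{eq:upper_bound_min_time_instability}, and the same normalized negative-eigenfunction perturbation to show $V_\delta \neq \emptyset$. The only substantive difference is presentational: the paper obtains your single coercive estimate (dissipation bounded below by a multiple of the deficit) by chaining Lemma \ref{lem:summary_expand_dissipation} (dissipation $\gtrsim \norm{\eta}_{H^1}^2$ under the orthogonality conditions) with Lemmas \ref{lem:summary_expand_energy}--\ref{lem:summary_expand_energy_2} ($\norm{\eta}_{H^1}^2 \gtrsim \abs{E_{h_0}(m)-E_{h_0}(w_{h_0})}$), which is exactly the ``full coercivity'' mechanism you yourself identify as necessary to absorb the cubic remainders (the sharp spectral-projection inequality being degenerate on the negative eigenspace), and the first variation $\delta E_{h_0}(w_{h_0}) = -\Lambda\, w_{h_0}$ vanishes only as a constrained gradient normal to the sphere, a point the paper's expansion lemmas encode and that your sketch glosses over harmlessly.
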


{The total} energy is actually related to the effective magnetic field $H (m)$, and is decreasing through the flow of \eqref{eq:llg}.
Although $w_{h_0}$ is a critical point with respect to the total energy $E_{h_0}$, it is only a saddle point, which explains its instability.
The proof of this theorem is, once again, inspired by \cite[Theorem~1.4]{Cote_Ignat__stab_DW_LLG_DM}.
First, a moving frame, adapted to the geometry of the equation, is introduced. 
It allows to reduce the dimension of the problem, from 3d (with a constraint) to 2d, thanks to a smallness assumption.
Then, expansions in the two remaining unknowns are performed on the total energy $E_{h_0}$ and its dissipation through the flow of the equation (Proposition \ref{prop:evolution_E_h_0}).
Two Schrödinger operators appear at the first order, 
from which some estimates are deduced.
Such estimates allow to get a global estimate on the evolution of the energy, from which a Gronwall lemma allows to conclude.

\begin{rem}
    The proof of Theorem \ref{th:instability} shows an even stronger property: {with $\lambda$ and $\varepsilon$ as in the statement, and $m_0 \in V_\delta$, if for some $\tau > 0$ there holds}
    \begin{equation*}
        \inf_{g \in G} \norm{m(t) - g.w_{h_0}}_{H^1} < \varepsilon
    \end{equation*}
    for all $t \in [0, \tau]$, then this infimum also satisfies the following lower bound {for all $t \in [0, \tau]$}:
    \begin{equation*}
        \inf_{g \in G} \norm{m (t) - g.w_{h_0}}_{H^1} \geq \lambda \abs{E_{h_0} (m_0) - E_{h_0} (w_{h_0})} e^{\lambda^2 t}.
    \end{equation*}
\end{rem}

\begin{rem}
    For the {instability result}, the initial data $m_0$ are taken in $\mathcal{H}^2$, whereas the assumption regarding the closeness of this initial data to the stationary solution only involves the $H^1$ norm.
    This simplification is employed here in order to use the energy dissipation identity in Theorem \ref{th:lwp} and so that $m \wedge H(m)$ is in $L^2$ for all time $t \geq 0$.
    It is purely technical: a limiting argument can be used to restrain this assumption to $m_0 \in \mathcal{H}^1$, in the same way as in \cite{Cote_Ignat__stab_DW_LLG_DM, Cote_Ferriere__2DW}.
\end{rem}

\begin{rem}
    Similarly as in \cite{Cote_Ignat__stab_DW_LLG_DM}, the analysis of this paper can also be performed with the addition of the Dzya\-lo\-shin\-skii-Mo\-riya interaction.
    Like the domain walls, the stationary solutions would be modified by a {dilation} and a rotation around $e_1$ with an angle linearly dependent of the space variable $x$, both transformations depending on the intensity of the aforementioned interaction.
    Apart from this change, we believe that our arguments still apply in this case.
\end{rem}

\subsection{Outline of the paper}

This paper is organized as follows.
Existence and uniqueness of the stationary solutions are proved in Section \ref{sec:ex_uni_stat_sol}, completed by exponential decay estimates at infinity.
The proof of the instability of the stationary solution is inspired from \cite{Cote_Ignat__stab_DW_LLG_DM}. In particular, we use a special space-dependent basis similar to the one used for the stability of the domain wall in the latter. This basis is introduced in Section \ref{sec:basis_schro_op}, along with two Schrödinger operators and their properties. These two operators appear in the expansion at the main order of the total energy and its dissipation, performed in Section \ref{sec:mod_en_disp_exp}.
The proof of Theorem \ref{th:instability} is then performed in Section \ref{sec:instability}.
We complete our study with numerical simulations, displayed and discussed in Section \ref{sec:numerical}.
This paper ends with a discussion about the conclusions, the open problems and some conjectures related to this study.

\subsection{Notations}

\subsubsection{General notations} \label{subsec:gen_notations}

\begin{itemize}
    
    \item $\mathcal{C}_b^\infty$ designates the space of $\mathcal{C}^\infty$ functions with all derivatives bounded.
    \item $(L^p)^k$ is usually used as short notation for the Lebesgue space $(L^p(\mathbb R, \mathbb R))^k$ (or equivalently $L^p(\mathbb R, \mathbb R^k)$) with $p\in [1, \infty]$.
    Similarly, $(\dot H^s)^k$ is classically used for the homogeneous Sobolev space $(\dot H^s(\mathbb R, \mathbb R))^k$ (or equivalently $\dot H^s(\mathbb R, \mathbb R^k)$) with $s \ge 0$ and $k \in \mathbb{N}$ whose seminorm is given through the Fourier transform:
    \begin{equation*} 
    \norm{f}_{(\dot H^s)^k}^2 \coloneqq \frac{1}{2\pi} \int_\mathbb R |\hat f (\xi)|^2 |\xi|^{2s} \diff \xi, \quad \text{where} \quad \hat f(\xi) = \int_{\mathbb R} e^{-ix\xi} f(x)\, \diff x.
    \end{equation*}
    In particular, $\norm{f}_{(\dot H^n)^k} = \norm{\partial_{x}^n f}_{(L^2)^k}$ for any $n \in \mathbb{N}$.
    Last, the Sobolev spaces $(H^s(\mathbb R, \mathbb R))^k$ (with $s \ge 0$ and $k \in \mathbb{N}$) are also denoted $(H^s)^k$.
    Though, most of the time, we will omit the $k$ exponent and simply denote these spaces $L^p$, $\dot H^s$ and $H^s$ when there is no possible confusion.
    {
    \item We denote $\langle \cdot, \cdot \rangle$ the $L^2$-scalar product (and more generally the $(L^2)^k$-scalar product):
    \begin{equation*}
        \langle f, g \rangle = \int_{\mathbb{R}} f \cdot g \diff x.
    \end{equation*}
    \item We denote $\langle \cdot, \cdot \rangle_{H^{-1}, H^1}$ the $H^{-1}-H^1$ duality bracket.}
    \item We denote $\mathbb S^2 \subset \mathbb{R}^3$ the 2-dimensional sphere: $\mathbb S^2 = \{ X \in \mathbb{R}^3 \, | \, \abs{X} = 1 \}$.
    
    \item We define for $s \ge 1$ the spaces
    \begin{align*}
    \mathcal H^s &:= \{ m=(m_1, m_2, m_3) \in \mathscr C(\mathbb R, \mathbb S^2) \, | \,  \norm{m}_{\mathcal H^s} < +\infty \}\\ 
    \nonumber &\textrm{with}\quad \norm{m}_{\mathcal H^s}:= \norm{m_2}_{L^2} + \norm{m_3}_{L^2} + \norm{m}_{\dot H^s}.
    \end{align*}
    For any interval $I \subset \mathbb{R}$, the space $\mathscr C(I, \mathcal H^s)$ is the set of maps $m$
    \begin{align*}
        m: I &\rightarrow \mathcal{H}^s \\
        t &\mapsto m(t)
    \end{align*}
    continuous with respect to the $H^s$ norm.
    
\end{itemize}

\subsubsection{Multilinear estimates in Sobolev spaces}

We will use the same notation $O_k^\ell (f)$ as in \cite{Cote_Ignat__stab_DW_LLG_DM} to express pointwise bounds that turn into Sobolev bounds with linear dependence in the highest term.

\begin{defi}
    For $k \geq 0$ and $\ell \geq 1$, and given a (possibly vector valued) function $f = (f_j)_{1 \leq j \leq J}$, we use the notation
    \begin{equation*}
        g = O_k^\ell (f)
    \end{equation*}
    for a (possibly vector valued) function $g$ if (each component of) $g$ is an homogeneous polynomial of degree $\ell$ in the components of $f$ and their derivatives such that the total number of derivatives in each term is at most $k$, and whose coefficients are $\mathscr{C}_b^\infty (\mathbb{R})$ functions. $g$ is then the sum of terms of the form
    \begin{equation*}
        {\mathfrak{a}} \prod_{j=1}^J \prod_{\kappa=0}^{k} (\partial_x^\kappa f_j)^{\ell_{j, \kappa}}, \qquad
        \textnormal{where} \quad
        \sum_{j, \kappa} \ell_{j, \kappa} = \ell, \quad
        %
        \quad
        \sum_{j, \kappa} {\ell}_{j, \kappa} \kappa \leq k, \quad
        \textnormal{and} \quad
        {\mathfrak{a}} \in \mathscr{C}_b^\infty.
    \end{equation*}
\end{defi}

\begin{lem} \label{lem:calc_negl_terms}
    \begin{enumerate}
        \item If $k' \geq k$, then $O_k^\ell (f) = O_{k'}^\ell (f)$.
        \item If ${\mathfrak{a}} \in \mathscr{C}^\infty_b$, then ${\mathfrak{a}} O_k^\ell (f) = O_k^\ell (f)$.
        \item $O_k^\ell (f_1) O_{k'}^{\ell'} (f_2) = O_{k + k'}^{\ell + \ell'} (f_1, f_2)$.
        \item $\partial_x O_k^\ell (f) = O_{k+1}^\ell (f)$,
        \item $O_k^\ell (f_1 + f_2) = O_k^\ell (f_1, f_2)$.
    \end{enumerate}
\end{lem}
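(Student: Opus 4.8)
The plan is to verify each of the five identities directly from the definition of the $O_k^\ell$ symbol, checking in every case that the expression under consideration is again a finite sum of monomials of the prescribed shape: an homogeneous polynomial of the stated degree in the components of the argument(s) and their derivatives, with the stated bound on the total number of derivatives per monomial, and with coefficients in $\mathscr{C}_b^\infty$. Two elementary closure properties of $\mathscr{C}_b^\infty$ will be used repeatedly: it is stable under multiplication (by the Leibniz rule, a product of functions with all derivatives bounded again has all derivatives bounded) and under differentiation. Since the whole statement concerns only the monomial structure of such sums, it suffices to argue on a single monomial $\mathfrak{a} \prod_{j,\kappa} (\partial_x^\kappa f_j)^{\ell_{j,\kappa}}$ and then sum.

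Items (1), (2) and (3) I would dispatch immediately. For (1), a monomial whose total number of derivatives is at most $k$ has, a fortiori, at most $k' \ge k$ derivatives, so any $g = O_k^\ell(f)$ is also of the form $O_{k'}^\ell(f)$ (the equality being read in the usual Landau-symbol sense). For (2), multiplying each monomial by a fixed $\mathfrak{a}' \in \mathscr{C}_b^\infty$ only replaces the coefficient $\mathfrak{a}$ by $\mathfrak{a}' \mathfrak{a} \in \mathscr{C}_b^\infty$, leaving both the degree and the derivative count untouched. For (3), the product of a monomial of $O_k^\ell(f_1)$ with a monomial of $O_{k'}^{\ell'}(f_2)$ is a monomial in the combined list of components of $(f_1, f_2)$ whose degree is $\ell + \ell'$, whose total derivative count is at most $k + k'$, and whose coefficient is the (still $\mathscr{C}_b^\infty$) product of the two coefficients.

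The two remaining items rest on the linearity and the Leibniz rule for $\partial_x$. For (4), differentiating a single monomial produces, by the product rule, a sum of terms of two kinds: either the coefficient $\mathfrak{a}$ is differentiated, which keeps $\partial_x \mathfrak{a} \in \mathscr{C}_b^\infty$ and does not touch the $f$-factors, so the degree stays $\ell$ and the derivative count stays $\le k \le k+1$; or one factor $\partial_x^\kappa f_j$ is turned into $\partial_x^{\kappa+1} f_j$, which preserves the degree $\ell$ and raises the total derivative count by exactly one, hence to at most $k+1$. In all cases the result is a term of $O_{k+1}^\ell(f)$. For (5), using $\partial_x^\kappa (f_{1,j} + f_{2,j}) = \partial_x^\kappa f_{1,j} + \partial_x^\kappa f_{2,j}$ and expanding each monomial by the multinomial theorem yields a sum of monomials in the components of $f_1$ and $f_2$ and their derivatives, each of total degree $\ell$ and with at most $k$ derivatives, the binomial coefficients being constants and hence in $\mathscr{C}_b^\infty$; this is exactly the form of a term of $O_k^\ell(f_1, f_2)$.

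None of these steps presents a genuine obstacle; the lemma is a bookkeeping exercise on the monomial structure, and the only point requiring slight care is item (4). There one must observe that differentiating the smooth coefficient consumes none of the derivative budget allotted to the $f$-factors, so that a single application of $\partial_x$ increases the per-monomial derivative count by at most one — which is precisely what lets the bound pass from $k$ to $k+1$ rather than degrading further.
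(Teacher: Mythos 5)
Your proof is correct. The paper does not prove this lemma in-text: it delegates it (together with Lemma \ref{lem:est_negl_terms}) to Claims 4.7--4.9 of the cited reference \cite{Cote_Ignat__stab_DW_LLG_DM}, and your argument is precisely the standard monomial-bookkeeping verification that those claims carry out -- checking term by term that degree, derivative count, and the $\mathscr{C}_b^\infty$ coefficient class are preserved, with the one genuinely attention-worthy point (that in item (4) differentiating the coefficient $\mathfrak{a}$ costs nothing from the derivative budget, so a single $\partial_x$ raises the count by at most one) correctly identified.
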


\begin{lem} \label{lem:est_negl_terms}
    Assume that $u = O_k^\ell (f)$ with $\ell \geq 1$.
    \begin{itemize}
        \item If $k \geq 2$ and $f \in H^k$, then there holds
            \begin{equation*}
                \norm{u}_{L^2} \lesssim \norm{f}_{H^{k-1}}^{\ell - 1} \norm{f}_{H^k}.
            \end{equation*}
        \item If $k \in \{ 0, 1 \}$ and $f \in H^1$, then
            \begin{equation*}
                \norm{u}_{L^2} \lesssim \norm{f}_{H^1}^\ell.
            \end{equation*}
        \item If $\ell \geq 2$, $k=2$ and $f \in H^1$, then
            \begin{equation*}
                \abs{\int u \diff x} \lesssim \norm{f}_{H^1}^\ell.
            \end{equation*}
        \item If $\ell \geq 2$, $k \in \{ 3, 4 \}$ and $f \in H^2$, then
            \begin{equation*}
                \abs{\int u \diff x} \lesssim \norm{f}_{H^1}^{\ell-k+2} \norm{f}_{H^2}^{k-2}.
            \end{equation*}
    \end{itemize}

\end{lem}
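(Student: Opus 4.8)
The plan is to reduce everything, by linearity, to a single monomial
\[
 v = \mathfrak{a} \prod_{j=1}^J\prod_{\kappa=0}^k (\partial_x^\kappa f_j)^{\ell_{j,\kappa}}, \qquad \sum_{j,\kappa}\ell_{j,\kappa} = \ell, \quad \sum_{j,\kappa}\ell_{j,\kappa}\,\kappa = \kappa_\ast \le k,
\]
and then to discard the coefficient $\mathfrak{a}\in\mathscr{C}_b^\infty$ by means of $\norm{\mathfrak{a}\, v}_{L^2}\le\norm{\mathfrak{a}}_{L^\infty}\norm{v}_{L^2}$ (resp. $\abs{\int \mathfrak{a}\, v\diff x}\le\norm{\mathfrak{a}}_{L^\infty}\norm{v}_{L^1}$). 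Thus $v$ becomes a product of exactly $\ell$ factors $\partial_x^{\kappa_i}f_{j_i}$, $i=1,\dots,\ell$, with $\sum_i\kappa_i = \kappa_\ast\le k$. The two analytic ingredients are the one-dimensional embedding $H^1(\mathbb R)\hookrightarrow L^\infty$ and the Gagliardo--Nirenberg (Moser) inequality $\norm{\partial_x^{\kappa_i}f}_{L^{2\kappa_\ast/\kappa_i}}\lesssim\norm{f}_{L^\infty}^{1-\kappa_i/\kappa_\ast}\norm{\partial_x^{\kappa_\ast}f}_{L^2}^{\kappa_i/\kappa_\ast}$ valid for $1\le\kappa_i\le\kappa_\ast$.

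For the first two ($L^2$) bounds I would apply Hölder and distribute the derivatives optimally. When $k\le 1$ (second item) at most one factor carries a derivative, so placing that factor in $L^2$ and the remaining $\ell-1$ factors in $L^\infty$ gives $\norm{v}_{L^2}\lesssim\norm{f}_{H^1}^\ell$ at once; the derivative-free case $\kappa_\ast=0$ is the same Hölder argument. When $k\ge 2$ and $\kappa_\ast\ge 1$ (first item) I would choose the Hölder exponents $p_i=2\kappa_\ast/\kappa_i$ on the factors with $\kappa_i\ge 1$ (which satisfy $\sum 1/p_i=1/2$) and $L^\infty$ on the factors with $\kappa_i=0$, and apply the Moser inequality to each derivative-bearing factor. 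The exponents of $\norm{\partial_x^{\kappa_\ast}f}_{L^2}$ then add up to exactly $1$ and those of $\norm{f}_{L^\infty}$ to exactly $\ell-1$, yielding $\norm{v}_{L^2}\lesssim\norm{f}_{L^\infty}^{\ell-1}\norm{f}_{\dot H^{\kappa_\ast}}$. Since $\kappa_\ast\le k$ and $k\ge 2$, I bound $\norm{f}_{L^\infty}\lesssim\norm{f}_{H^1}\le\norm{f}_{H^{k-1}}$ and $\norm{f}_{\dot H^{\kappa_\ast}}\le\norm{f}_{H^k}$ (even $\le\norm{f}_{H^{k-1}}$ when $\kappa_\ast<k$), which in every case is $\lesssim\norm{f}_{H^{k-1}}^{\ell-1}\norm{f}_{H^k}$.

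For the two integral bounds (here $\ell\ge2$), I would use $\abs{\int v\diff x}\le\norm{v}_{L^1}$, but only after integrating by parts so that no single factor carries more derivatives than the regularity of $f$ permits: at most one derivative per factor when $f\in H^1$ (third item, $k=2$) and at most two when $f\in H^2$ (fourth item, $k\in\{3,4\}$). A factor $\partial_x^{\kappa}f$ with $\kappa$ too large is integrated by parts against the rest of the product; the boundary terms vanish by the decay of Sobolev functions, and the Leibniz rule produces finitely many new monomials of the same degree $\ell$ and the same total derivative count $\kappa_\ast$, but now more evenly spread, to which the calculus rules of Lemma \ref{lem:calc_negl_terms} still apply. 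Once each factor carries an admissible number of derivatives, Hölder into $L^2$, $L^4$ and $L^\infty$ norms, together with the one-dimensional interpolation $\norm{\partial_x f}_{L^4}\lesssim\norm{f}_{H^1}^{3/4}\norm{f}_{H^2}^{1/4}$, closes the estimate; the hypothesis $\ell\ge 2$ is used precisely to ensure that at least one factor can be placed in $L^2$, which is what makes the integral over the whole line finite.

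The Hölder/Gagliardo--Nirenberg power counting is routine; the genuinely delicate point is the integration-by-parts reduction in the last two items. There one must check that redistributing derivatives never raises the number of top-order factors beyond what the claimed exponents $\norm{f}_{H^{k-1}}^{\ell-1}\norm{f}_{H^k}$ and $\norm{f}_{H^1}^{\ell-k+2}\norm{f}_{H^2}^{k-2}$ permit. Concretely, for $k=4$ one must arrive at configurations of type $(2,2)$ or $(2,1,1)$ and verify, absorbing any fractional leftovers through $\norm{f}_{H^1}\le\norm{f}_{H^2}$, that the total powers match exactly. This bookkeeping, rather than any single inequality, is where the care is needed.
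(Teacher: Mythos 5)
Your proposal is correct and takes essentially the same route as the paper's own proof: the paper in fact gives no argument of its own but defers to Claims 4.7--4.9 of \cite{Cote_Ignat__stab_DW_LLG_DM}, and those claims are proved with precisely the toolkit you deploy --- H\"older with exponents $2\kappa_\ast/\kappa_i$, the embedding $H^1(\mathbb{R}) \hookrightarrow L^\infty$, Gagliardo--Nirenberg interpolation at the endpoint $a=\kappa_i/\kappa_\ast$, and integration by parts to redistribute derivatives in the two integral bounds --- with the same power counting, including the absorption of fractional exponents via $\norm{f}_{H^1} \leq \norm{f}_{H^2}$. The only point worth stating explicitly is that when a single factor carries more derivatives than $f$ possesses (e.g.\ $\partial_x^2 f$ with $f \in H^1$, or $\partial_x^3 f$, $\partial_x^4 f$ with $f \in H^2$), the quantity $\int u \diff x$ should be read as an $H^{-1}$--$H^1$ (respectively $H^{-2}$--$H^2$) duality pairing against the remaining product, so that your ``integration by parts'' is simply the definition of the distributional derivative and the vanishing of boundary terms never needs to be invoked.
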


The proof of such properties and bounds can be found in Claims 4.7 to 4.9 of \cite{Cote_Ignat__stab_DW_LLG_DM}.

\section{Existence, uniqueness and properties of the stationary solution} \label{sec:ex_uni_stat_sol}

This section is devoted to the general properties of the stationary solution.

\subsection{Existence and uniqueness}

We begin by proving Theorem \ref{th:stat_sol}, which asserts the existence and uniqueness of the stationary solution.

\begin{proof}[Proof of Theorem \ref{th:stat_sol}]
    The proof is quite similar to the proof of the existence of stationary domain walls in \cite[Theorem~1.1]{Cote_Ignat__stab_DW_LLG_DM}, and we use the same tactic.
    
    \medskip
    
    \textit{Step 1: Equation on $m$.}
    
    Let $m \in \mathcal{H}^1$ be {a stationary solution of \eqref{eq:llg} such that $\lim_{{-} \infty} m = e_1$. It is} a critical point to $E_{h_0}$ in $\mathcal{H}^1$ {and it} satisfies $m \wedge H(m) = 0$ and $\abs{m} = 1$. The first equation can be re-written as $H(m) = \Lambda (x) m$, which is
    \begin{equation} \label{eq:ODE_m}
        \partial_{xx} m - m_2 e_2 - m_3 e_3 + h_0 e_1 = \Lambda (x) m.
    \end{equation}
    The scalar function $\Lambda$, defined on $\mathbb{R}$, can be explicited thanks to the second equation:
    \begin{equation} \label{eq:Lambda}
        \Lambda (x) = H(m) \cdot m = - (\abs{\partial_x m}^2 + m_2^2 + m_3^2) + h_0 m_1.
    \end{equation}
    Since $m \in \mathcal{H}^1$, by a bootstrap argument, there holds $m \in \mathscr{C}^\infty (\mathbb{R})$.
    
    \medskip

    \textit{Step 2: $m$ is planar.}
    
    First, we show that
    \begin{equation} \label{eq:col_expr}
        m_2 \partial_x m_3 - m_3 \partial_x m_2 = 0 \quad \text{on } \mathbb{R}.
    \end{equation}
    Indeed, using \eqref{eq:ODE_m}, there holds
    \begin{align*}
        \partial_x \Bigl( m_2 \partial_x m_3 - m_3 \partial_x m_2 \Bigr) &= m_2 \partial_{xx} m_3 - m_3 \partial_{xx} m_2 \\
            &= m_2 ( \partial_{xx} m_3 - m_3 ) - m_3 ( \partial_{xx} m_2 - m_2 ) \\
            &= m_2 ( \Lambda (x) m_3 ) - m_3 ( \Lambda (x) m_2 ) \\
            &= 0,
    \end{align*}
    which shows that $m_2 \partial_x m_3 - m_3 \partial_x m_2$ is constant on $\mathbb{R}$.
    On the other hand, since $m \in \mathcal{H}^1$, we have $m_2, m_3 \in H^1$, and thus $m_2 \partial_x m_3 - m_3 \partial_x m_2 \in L^1$.
    This proves \eqref{eq:col_expr}, which implies that $(m_2 (x), \partial_x m_2 (x))$ and $(m_3 (x), \partial_x m_3 (x))$ are collinear in $\mathbb R^2$ for every $x$ in $\mathbb R$.
    On the other hand, by \eqref{eq:ODE_m}, these two vector fields solve the same first order linear ODE system in $(u, v)$:
    \begin{equation*}
        \partial_x u = v, \quad \partial_x v = (1 + \Lambda (x)) u.
    \end{equation*}
    By uniqueness in the Cauchy-Lipschitz theorem, the collinearity factor is constant in $x$, and thus there exists $\beta \in \mathbb R$ such that $m_2 = \beta m_3$ (or $m_3 = \beta m_2$, respectively) in $\mathbb R$.
    Therefore, choosing $\phi$ such that $\cot \phi = \beta$ (or $\tan \phi = \beta$, respectively), we conclude that $R_{- \phi} m$ satisfies the same equation and regularity as $m$, with the additional property that its 3rd component vanishes.
    
    \medskip
    
    \textit{Step 3: Lifting.}
    
    Since $m$ can be taken as a smooth function with values in $\mathbb S^1 \times \{ 0 \}$, there exists a smooth lifting $\theta: \mathbb R \rightarrow \mathbb R$ such that $m = (\cos \theta, \sin \theta, 0)$. For such a lifting, there holds
    \begin{align*}
        \abs{\partial_x \theta}^2 &= \abs{\partial_x m}^2 \in L^1, \\
        \sin^2 (\theta) &= m_2^2 \in L^1, \\
        \cos \theta - 1 &= m_1 - 1 \, {\xrightarrow[x \to - \infty]{} 0}.
    \end{align*}
    Therefore, we get
    \begin{align*}
        E (m) &= \frac{1}{2} \int \Bigl[ (\partial_x \theta)^2 + \sin^2 \theta \Bigr] \diff x < \infty,
    \end{align*}
    and the equation \eqref{eq:ODE_m} can be re-written in terms of $\theta$ as \eqref{eq:ODE_theta_v1}.
    Moreover, since $\lim_{{-} \infty} m = e_1$ {and since $\theta$ is defined up to a $2 \pi \mathbb{Z}$ additive constant, we can assume $\lim_{{-} \infty} \theta = {0}$}.
    We also point out that, conversely, if $\theta$ satisfies \eqref{eq:ODE_theta_v1} and $\lim_{{-} \infty} \theta = 0$, then $m = (\cos \theta, \sin \theta, 0)$ satisfies \eqref{eq:ODE_m} and $\lim_{{-} \infty} m = e_1$.

    Since $\theta$ is smooth, we can multiply and integrate \eqref{eq:ODE_theta_v1} by $\theta$, which yields
    \begin{equation*}
        - (\partial_x \theta)^2 + \sin^2 \theta - 2 h_0 ( \cos \theta - 1 ) = cst.
    \end{equation*}
   {The first two terms} are integrable on $\mathbb R$ {and the last one goes to $0$ at $- \infty$}, so this constant is zero and we get
    \begin{equation} \label{eq:theta_en}
        - (\partial_x \theta)^2 + \sin^2 \theta + 2 h_0 ( 1 - \cos \theta ) = 0.
    \end{equation}

    \medskip
    
    \textit{Step 4 : No non-constant solution when $h_0 \leq - 1$}
    
    In \eqref{eq:theta_en}, we see that the last two terms can be rewritten as
    \begin{align*}
        \sin^2 \theta + 2 h_0 ( 1 - \cos \theta ) &= 1 - \cos^2 \theta + 2 h_0 ( 1 - \cos \theta ) \\
            &= ( 1 - \cos \theta ) ( 1 + \cos \theta + 2 h_0 ) \\
            &= ( 1 - \cos \theta ) ( \cos \theta - 1 + 2 (h_0 + 1) ).
    \end{align*}
    Then, we see that, if $h_0 + 1 \leq 0$, then $\cos \theta - 1 + 2 (h_0 + 1) < 0$ as soon as $\theta \in \mathbb{R} \setminus 2 \pi \mathbb{Z}$, and therefore so is $\sin^2 \theta + 2 h_0 ( 1 - \cos \theta )$.
    Using this property in \eqref{eq:theta_en}, we can easily deduce that $\theta$ cannot take any value in $\mathbb{R} \setminus 2 \pi \mathbb{Z}$. As $\theta$ is continuous and satisfies $\lim_{- \infty} \theta = 0$, we conclude that $\theta \equiv 0$ and $m \equiv e_1$.
    
    \medskip
    
    \textit{Step {5}: Trajectories of the lifting for $h_0 > 0$.}

    Let $X \coloneqq (\theta, \partial_x \theta)$ and set the Hamiltonian
    \begin{equation*}
        \operatorname{Ham} (X_1, X_2) = \frac{1}{2} \Bigl( X_2^2 - \sin^2 X_1 - 2 h_0 (1 - \cos X_1) \Bigr), \qquad X = (X_1, X_2) \in \mathbb R^2.
    \end{equation*}
    From the previous equations, $X$ satisfies the dynamical system
    \begin{equation*}
        \partial_x X = \Bigl( \frac{\partial \operatorname{Ham}}{\partial X_2} (X), - \frac{\partial \operatorname{Ham}}{\partial X_1} (X) \Bigr),
    \end{equation*}
    and the trajectory $\{ X (x) \}_{x \in \mathbb R}$ is included in the zero set of $\operatorname{Ham}$.
    We emphasize that $1 - \cos \theta \geq 0$, and vanishes only on $2 \pi \mathbb Z$.
    For $h_0 > 0$, the zero set of $\operatorname{Ham}$ can be denoted as $Z^- \cup Z^0 \cup Z^+$ where
    \begin{equation*}
        Z^\pm \coloneqq \{ (X_1, X_2) \in \mathbb R^2 \, | \, \pm X_2 > 0, \operatorname{Ham} (X_1, X_2) = 0 \},
    \end{equation*}
    and
    \begin{equation*}
        Z^0 \coloneqq \{ (X_1, 0) \, | \, \operatorname{Ham} (X_1, 0) = 0 \} = 2 \pi \mathbb Z \times \{ 0 \}.
    \end{equation*}
    In particular, any connected component of $Z^+$ and $Z^-$ ends at two consecutive points of $Z^0$, which is actually the set of constant solutions of the previous dynamical system. Therefore, by uniqueness in the Cauchy-Lipschitz theorem, the trajectory $\{ X (x) \}_{x \in \mathbb R}$ either:
    \begin{itemize}
        \item is included in $Z^0$: the trajectory is stationary, i.e. $\theta \equiv {0}$ {with the condition $\lim_{- \infty} \theta = 0$}, corresponding to a constant solution of \eqref{eq:ODE_m},
        \item begins and ends at two consecutive points of $Z^0${, one of them being $(0, 0)$ with the assumption $\lim_{- \infty} \theta = 0$, so that the other one is either $(2 \pi, 0)$ or $(- 2 \pi, 0)$}. In particular, the sign of $\partial_x \theta$ does not change, and the total rotation of a critical point $\theta$ is given by $\int_{\mathbb R} \partial_x \theta \diff x = \pm 2 \pi$. This also means that {$\lim_{+ \infty} \theta = \pm 2 \pi$}.
    \end{itemize}

    \medskip
    
    \textit{Step {6}: Conclusion on existence and uniqueness for $h_0 > 0$.}

    Assuming that $m$ is not constant, we are in the second case of the previous disjunction.
    If $\lim_{+ \infty} \theta = - 2 \pi$, we can consider $- \theta$ instead of $\theta$: it satisfies the same ODE \eqref{eq:ODE_theta_v1}, and it corresponds to a rotation of angle $\pi$ around $e_1$ for $m$. Therefore, we can assume $\lim_{+ \infty} \theta = 2 \pi$.
    Thus, there exists some $x_0 \in \mathbb R$ such that $\theta (x_0) = \pi$, and we can assume that $x_0 = 0$ up to a translation $\tau_{x_0}$.
    Since the sign of $\partial_x \theta$ does not change, there holds $\partial_x \theta > 0$.
    Thus, with this property along with \eqref{eq:theta_en}, $\theta$ satisfies
    \begin{System} \label{sys:ODE_theta}
        \partial_x \theta = \sqrt{\sin^2 \theta + 2 h_0 (1 - \cos \theta)}, \\
        \theta (0) = \pi.
    \end{System}
    The function $f: y \mapsto \sqrt{\sin^2 y + 2 h_0 (1 - \cos y)}$ is smooth on $(0, 2 \pi)$, and we know that $\theta (x) \in (0, 2 \pi)$ for all $x \in \mathbb R$. Therefore, by the uniqueness in the Cauchy-Lipschitz theorem, the previous properties uniquely {define} $\theta$.
    %
    
    
    
    For the existence, let $\theta_{h_0}$ {be the maximal solution of \eqref{sys:ODE_theta}} given by the existence part of Cauchy-Lipschitz theorem. Then $\theta_{{h_0}}$ also satisfies \eqref{eq:ODE_theta_v1}.
    Moreover, one can compute an expansion of $f$ at $0^+$ and $2 \pi^-$.
    For instance, at {$0^+$}, we have $\sin y = y + O(y^3)$, $\sin^2 y = y^2 + O(y^4)$ and $1 - \cos y = \frac{y^2}{2} + O(y^4)$. Therefore, $f(y) = \sqrt{1 + h_0} \, y + O(y^3)$.
    Thus, it is easy to prove that not only $\theta_{h_0}$ does not reach $0$ {in finite time, which induces the global existence of $\theta_{h_0}$}, but {also that} $\theta_{h_0}$ and $\partial_x \theta_{h_0}$ converge exponentially to $0$ at $- \infty$ by Gronwall lemma, and so {is} $\partial_x^k \theta_{h_0}$ for any $k \geq 1$ thanks to \eqref{eq:ODE_theta_v1}.
    A similar conclusion can be achieved for $\theta_{h_0} - 2 \pi$ at $+ \infty$, therefore $\theta$ is globally defined and $\theta (x) \in (0, 2 \pi)$ for all $x \in \mathbb R$.
    This shows that $w_{h_0} = ( \cos \theta_{h_0}, \sin \theta_{h_0}, 0 ) \in \mathcal{H}^1$ is a non-constant critical point to $E_{h_0}$ such that $\lim_{\pm \infty} w_{h_0} = e_1$. Last, we emphasize that \eqref{eq:theta_en} along with \eqref{eq:Lambda} yields \eqref{eq:expr_H_w_h_0_v1}.
    
    \medskip
    
    \textit{Step {7}: Trajectories of the lifting for $h_0 \in (- 1, 0)$.}
    
    In this case, the trajectories are different. Indeed, $\operatorname{Ham} (\pi, X_2) = \frac{1}{2} X_2^2 - 2 h_0 > 0$ for all $X_2 \in \mathbb R$, which shows that $\theta$ cannot cross $\pi$.
    However, defining $g (y) \coloneqq \sin^2 y + 2 h_0 (1 - \cos y) = 1 - \cos^2 (y) + 2 h_0 (1 - \cos y)$, there holds $g (y) = g_2 (\cos y)$ where $g_2 (z) \coloneqq 1 - z^2 + 2 h_0 (1 - z)$ is a polynomial function of degree $2$ which has two roots: $1$ and $-1 - 2h_0$.
    By a further analysis, it is easy to check that $g > 0$ on $(0, \theta_c)$ and $g < 0$ on $(\theta_c, \pi)$ where $\theta_c = \arccos{(- 1 - 2 h_0)}$. Thus, $\theta$ cannot reach $(\theta_c, \pi)$, neither $(- \pi, - \theta_c)$ since $g$ is even. Similarly as before, $\theta$ is defined up to a $2 \pi$ additive constant, so that we can assume that {$\lim_{- \infty} \theta = 0$}, and thus $- \theta_c \leq \theta (x) \leq \theta_c$ for all $x \in \mathbb R$.
    Moreover, similarly as before, if $\theta (x_0) = 0$ for some $x_0$, then $\theta \equiv 0$ is constant.
    
    Assuming now that $\theta$ is not constant, this means that the sign of $\theta$ cannot change. We can assume that $\theta > 0$ (up to a rotation of angle $\pi$ about the axis $e_1$ for $w_{h_0}$).
    Then, $\theta$ reaches $\theta_c$ at some point $x_0$. Indeed, if not, then $\partial_x \theta = g(\theta)$ does not vanish, which means that the sign of $\partial_x \theta$ does not change, that $\theta$ is strictly monotone and has two different limits at infinity, limits which are in $[0, \theta_c]$ and must be constant solutions of \eqref{eq:ODE_theta_v1} and \eqref{eq:ODE_theta_2_v1}. Thus, these limits must be $0$ and $\theta_c$. However, if $\theta_c$ is a constant solution to \eqref{eq:ODE_theta_v1}, then $\cos \theta_c = - h_0$, whereas $\cos \theta_c = - 1 - 2 h_0$ from the expression of $\theta_c$, leading to a contradiction as $h_0 > - 1$.
    %
    
    Up to a translation in space, we can assume that $\theta (0) = \theta_c$. Then, $\partial_x \theta (0) = 0$ and $\partial_{xx} \theta (0) = \sin(\theta_c) (h_0 + \cos \theta_c) < 0$.
    Since $\theta$ cannot cross $0$ and $\partial_x \theta$ cannot change sign unless $\theta = \theta_c$, it is easy to prove that $\partial_x \theta > 0$ on $(- \infty, 0)$ and $\partial_x \theta < 0$ on $(0, \infty)$.
    Therefore, by \eqref{eq:theta_en}, we obtain \eqref{eq:ODE_theta_2_v2}.
    
    \medskip
    
    \textit{Step {8}: Conclusion on the existence and uniqueness for $h_0 \in (- 1, 0)$.}
    
    The equation \eqref{eq:ODE_theta_v1} along with $\theta (0) = \theta_c$ and $\partial_x \theta (0) = 0$ ensures that $\theta$ is global and unique by Cauchy-Lipschitz theorem.

    On the other hand, if we take $\theta_{h_0}$ satisfying \eqref{eq:ODE_theta_v1} along with $\theta_{h_0} (0) = \theta_c$ and $\partial_x \theta_{h_0} (0) = 0$, which exists by Cauchy-Lipschitz theorem, then $\theta_{h_0}$ satisfies all the above properties.
    Moreover, it is easy to compute that $f = \sqrt{g}$ satisfies $f(y) = \sqrt{1 + h_0} \, y + O (y^3)$ for $y \rightarrow 0$, which proves (thanks to \eqref{eq:ODE_theta_2_v2} and the fact that $\lim_{\pm \infty} \theta = 0$) that $\theta_{h_0}$ and $\partial_x \theta_{h_0}$ decays exponentially, so that $\theta_{h_0} \in H^1$.
    Last, the same arguments show that $w_{h_0} = (\cos \theta_{h_0}, \sin \theta_{h_0}, 0)$ satisfies all the above equations and is thus a critical point of $E_{h_0}$ in $\mathcal{H}^1$.
\end{proof}

\subsection{Exponential convergence at infinity}

Next, we explore additional properties of the stationary solution outlined in Theorem \ref{th:stat_sol}. Specifically, we focus on deriving decay estimates for the solution and its derivatives as $\abs{x}$ tends to $\pm \infty$.

\begin{lem} \label{lem:w_h_0_exp}
    For any $k \in \mathbb N$, there exists $C_k > 0$ such that, for any $x \in \mathbb R$,
    \begin{itemize}
        \item if $h_0 > 0$,
            \begin{gather*}
                \abs{\partial_x^k \theta_{h_0} {(x)}} \leq C_k e^{- \sqrt{1 + h_0} \abs{x}} \qquad \text{for } x < 0, \\
                \abs{\partial_x^k (\theta_{h_0} - 2 \pi) {(x)}} \leq C_k e^{- \sqrt{1 + h_0} \abs{x}} \qquad \text{for } x > 0,
            \end{gather*}
        \item if $h_0 \in (-1, 0)$,
            \begin{equation*}
                \abs{\partial_x^k \theta_{h_0} {(x)}} \leq C_k e^{- \sqrt{1 + h_0} \abs{x}},
            \end{equation*}
        \item in both cases,
            \begin{equation*}
                \abs{\partial_x^k (w_{h_0} - e_1) {(x)}} \leq C_k e^{- \sqrt{1 + h_0} \abs{x}}.
            \end{equation*}
    \end{itemize}
\end{lem}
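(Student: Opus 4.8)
The plan is to exploit the first integral \eqref{eq:theta_en} together with the linearisation of the equation at the limiting value of $\theta_{h_0}$, whose characteristic rate is precisely $\sqrt{1+h_0}$. First I would reduce to a single endpoint. Using the symmetries of $\theta_{h_0}$ established in Theorem \ref{th:stat_sol} (evenness when $h_0\in(-1,0)$, oddness of $\theta_{h_0}-\pi$ when $h_0>0$), it suffices to treat $-\infty$. I write $\vartheta$ for the quantity tending to $0$ there, namely $\vartheta=\theta_{h_0}$ in all cases at $-\infty$, and I note that for $h_0>0$ the estimate at $+\infty$ is the estimate at $-\infty$ for $\theta_{h_0}-2\pi$; since $\sin$ and $\cos$ are $2\pi$-periodic, $\vartheta$ solves the same equations \eqref{eq:ODE_theta_v1} and \eqref{eq:theta_en} with $\vartheta\to0$, so a single computation near $0$ covers every case.

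Second, I would establish the sharp decay of $\vartheta$ itself. From \eqref{eq:theta_en} one has $(\partial_x\vartheta)^2=\sin^2\vartheta+2h_0(1-\cos\vartheta)=(1+h_0)\vartheta^2\bigl(1+O(\vartheta^2)\bigr)$ by Taylor expansion — this is the expansion $f(y)=\sqrt{1+h_0}\,y+O(y^3)$ already used in the proof of Theorem \ref{th:stat_sol} — hence $\partial_x\log\abs{\vartheta}=\sqrt{1+h_0}+O(\vartheta^2)$ near the endpoint. A crude integration, using only that $\partial_x\log\abs{\vartheta}\ge\tfrac12\sqrt{1+h_0}$ once $\abs{\vartheta}$ is small, already yields some exponential decay and in particular $\vartheta^2\in L^1$ near $-\infty$. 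Feeding this back, the correction $O(\vartheta^2)$ becomes integrable, so $\log\abs{\vartheta}-\sqrt{1+h_0}\,x$ converges as $x\to-\infty$; this upgrades the rate to the sharp value and gives $\abs{\vartheta}(x)\le C\,e^{-\sqrt{1+h_0}\abs{x}}$, the $k=0$ bound.

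Third, for the derivatives I would argue by induction. The case $k=1$ is immediate from the first-order relation $\abs{\partial_x\vartheta}=\sqrt{\sin^2\vartheta+2h_0(1-\cos\vartheta)}\lesssim\abs{\vartheta}$. For $k\ge2$ I would use the second-order equation in the form $\partial_{xx}\theta_{h_0}=G(\theta_{h_0})$ with $G(y)=\tfrac12\sin2y+h_0\sin y$, which vanishes at $0$ (and at $2\pi$). Differentiating $k-2$ times and applying the Fa\`a di Bruno and Leibniz formulas, $\partial_x^k\theta_{h_0}$ is a finite sum of terms, each a product of a bounded factor $G^{(j)}(\theta_{h_0})$ with derivatives $\partial_x^{i}\theta_{h_0}$ of orders $i\ge1$; every such term carries at least one factor already controlled by $C\,e^{-\sqrt{1+h_0}\abs{x}}$, so the decay propagates to order $k$.

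Finally, I would transfer the estimates to $w_{h_0}-e_1=(\cos\vartheta-1,\sin\vartheta,0)$. For $k=0$, $\abs{w_{h_0}-e_1}\lesssim\abs{\cos\vartheta-1}+\abs{\sin\vartheta}\lesssim\abs{\vartheta}$; for $k\ge1$, each derivative of $\cos\theta_{h_0}$ and $\sin\theta_{h_0}$ produces, via the chain rule, products of $\sin\theta_{h_0}$ or $\cos\theta_{h_0}$ with derivatives of $\theta_{h_0}$ of order $\ge1$, so every term contains a decaying factor and the bound follows from the previous step. The only delicate point is pinning down the exact rate $\sqrt{1+h_0}$ rather than a generic exponential rate: this is where the two-step bootstrap (crude decay, then integrability of the nonlinear remainder $O(\vartheta^2)$) is essential; everything else is a routine induction once the base decay is in hand.
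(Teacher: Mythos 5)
Your proposal is correct and follows essentially the same route as the paper: both rest on the first-order relation $\partial_x\theta_{h_0}=f(\theta_{h_0})$ with $f(y)=\sqrt{1+h_0}\,y+O(y^3)$, a Gronwall-type integration to get the sharp decay of $\theta_{h_0}$, an induction on \eqref{eq:ODE_theta_v1} for the higher derivatives, and the chain rule to transfer the bounds to $w_{h_0}-e_1$. The only (harmless) difference is how the error term $\int\theta_{h_0}^2$ in the exponent is controlled: the paper invokes the already-known fact $\theta_{h_0}\in L^2(-\infty,0)$ (from $\sin\theta_{h_0}\in L^2$), whereas you bootstrap this integrability from a preliminary crude exponential decay, which is a self-contained variant of the same mechanism.
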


\begin{proof}
    Let $f$ defined by
    \begin{align*}
        f: (0, 2 \pi) &\rightarrow \mathbb R \\
            y &\mapsto \sqrt{\sin^2 y + 2 h_0 (1 - \cos y)}.
    \end{align*}
    Then $\partial_x \theta_{h_0} = f (\theta_{h_0})$.
    Moreover, by the previous expansion provided in the proof of Theorem \ref{th:stat_sol}, we know that $f(y) = \sqrt{1 + h_0} \, y + O(y^3)$ near $0^+$.
    If $h_0 > 0$, since $\lim_{- \infty} \theta_{h_0} = 0$, this shows that there exists $C > 0$ and $A < 0$ such that, for every $x < A$,
    \begin{equation} \label{eq:ineg_theta}
        (\sqrt{1 + h_0} - C \theta_{h_0} (x)^2) \, \theta_{h_0} (x) \leq \partial_x \theta_{h_0} (x) \leq (\sqrt{1 + h_0} + C \theta_{h_0} (x)^2) \, \theta_{h_0} (x).
    \end{equation}
    By a backward Gronwall lemma, we can estimate that, for all $x < A$,
    \begin{equation*}
        \theta_{h_0} (x) \leq \theta_{h_0} (A) \exp{\Bigl( - \sqrt{1 + h_0} (A - x) + C \int_x^A \theta_{h_0} (y)^2 \diff y \Bigr)}.
    \end{equation*}
    We obtain the announced estimate for $\theta_{h_0}$ by using the fact that $\theta_{h_0} \in L^2 (- \infty, 0)$, since $\sin \theta_{h_0} \in L^2$ and $\lim_{- \infty} \theta_{h_0} = 0$.
    Then, using again \eqref{eq:ineg_theta}, we obtain the conclusion for $\partial_x \theta_{h_0}$. As for $\partial_{xx} \theta_{h_0}$, we use \eqref{eq:ODE_theta_v1}, and more generally the estimate for $\partial_x^k \theta_{h_0}$ (for $k \geq 2$) can be reached by induction, by differentiating \eqref{eq:ODE_theta_v1} $k-2$ times and applying the estimates of the lower derivatives.

    A similar conclusion can be reached for $2 \pi - \theta_{h_0}$ at $+ \infty$, for $\theta_{h_0}$ at $\pm \infty$ if $h_0 \in (-1, \infty)$, and finally the estimate for $w_{h_0}$ in both cases comes from these estimates on $\theta_{h_0}$ and the fact that $w_{h_0} = (\cos \theta_{h_0}, \sin \theta_{h_0}, 0)$.
\end{proof}

\section{Associated basis and Schrödinger operators} \label{sec:basis_schro_op}

In this section, we introduce an associated space-dependent basis and two Schrödinger operators related to our problem. This specialized basis is carefully selected to capture the essential features of the system dynamics near the stationary solution $w_{h_0}$, providing a valuable framework for the analysis of the evolution and for the computation of the expansions of some quantities.
But first, we introduce and analyze two significant Schrödinger operators. They play a fundamental role in our analysis, as they appear both in the operator of the linearized equation and in the nonlinear analysis through the total energy.

\subsection{Two important Schrödinger operators} \label{sec:schro_op}

\subsubsection{Definition and first properties}

We begin by defining the following operators, denoted $L_1$ and $L_2$:
\begin{gather*}
    L_1 \coloneqq - \partial_{xx} + 1 - 2 \sin^2 \theta_{h_0} + h_0 \cos \theta_{h_0}, \\
    L_2 \coloneqq L_1 - 2 h_0 (1 - \cos \theta_{h_0}) = - \partial_{xx} + 1 - 2 \sin^2 \theta_{h_0} + 3 h_0 \cos \theta_{h_0} - 2 h_0.
\end{gather*}
As it is classical for Schrödinger operators, they are unbounded self-adjoint operators in $L^2 (\mathbb R)$ with domain $H^2 (\mathbb R)$, and their quadratic form can be extended to $H^1 (\mathbb R)$.
Our primary interest lies in the spectrum of these operators, particularly the discrete spectrum contained in $(- \infty, 0]$. As an initial observation, we establish the presence of certain functions in their kernel.

\begin{prop} \label{prop:kernel_schro_op}
    There holds $L_1 \partial_x \theta_{h_0} = L_2 \sin \theta_{h_0} = 0$.
\end{prop}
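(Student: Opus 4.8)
The plan is to verify both identities by direct substitution, using the two equations satisfied by $\theta_{h_0}$ established in the proof of Theorem \ref{th:stat_sol}: the second-order ODE \eqref{eq:ODE_theta_v1} and the first integral \eqref{eq:theta_en}. Conceptually, these two kernel elements are nothing but the infinitesimal generators of the two invariances of \eqref{eq:llg}, read off at $w_{h_0}$: the translation mode $\partial_x\theta_{h_0}$ governs in-plane perturbations and so should sit in the kernel of $L_1$, while the rotation mode about $e_1$, whose $\theta$-profile is $\sin\theta_{h_0}$ (the out-of-plane component), should sit in the kernel of $L_2$. This explains a priori why each function belongs to the respective kernel, but the shortest route is explicit computation. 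Throughout I abbreviate $\theta = \theta_{h_0}$.

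For $L_1\partial_x\theta$ I would simply differentiate \eqref{eq:ODE_theta_v1} once in $x$. Writing the equation as $\partial_{xx}\theta = \sin\theta\cos\theta + h_0\sin\theta$ and differentiating gives
\begin{equation*}
\partial_x^3\theta = (\cos^2\theta - \sin^2\theta + h_0\cos\theta)\,\partial_x\theta.
\end{equation*}
Substituting this into $L_1\partial_x\theta = -\partial_x^3\theta + (1 - 2\sin^2\theta + h_0\cos\theta)\,\partial_x\theta$ and collecting terms, the coefficient of $\partial_x\theta$ is $-(\cos^2\theta - \sin^2\theta + h_0\cos\theta) + (1 - 2\sin^2\theta + h_0\cos\theta) = 1 - \cos^2\theta - \sin^2\theta = 0$. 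This part is routine differentiation and presents no real difficulty.

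For $L_2\sin\theta$ I would set $u = \sin\theta$, so that $\partial_{xx}u = -\sin\theta\,(\partial_x\theta)^2 + \cos\theta\,\partial_{xx}\theta$. Here both equations enter: I use \eqref{eq:ODE_theta_v1} to replace $\partial_{xx}\theta$ by $\sin\theta\cos\theta + h_0\sin\theta$, and then the first integral \eqref{eq:theta_en} to replace $(\partial_x\theta)^2$ by $\sin^2\theta + 2h_0(1-\cos\theta)$. After expanding and repeatedly using $\cos^2\theta - \sin^2\theta = 1 - 2\sin^2\theta$, all $\theta$-derivatives cancel and the expression collapses to
\begin{equation*}
\partial_{xx}u = (1 - 2\sin^2\theta + 3h_0\cos\theta - 2h_0)\,u,
\end{equation*}
which is exactly $L_2 u = 0$ in view of the definition of $L_2$.

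The only genuinely delicate point is the second identity: unlike the first, it is not a mere differentiation of \eqref{eq:ODE_theta_v1} but crucially requires the first integral \eqref{eq:theta_en} to eliminate the $(\partial_x\theta)^2$ term. This substitution is where the precise potential of $L_2$ is pinned down — in particular the $-2h_0$ shift that distinguishes $L_2$ from $L_1$ arises exactly from the $2h_0(1-\cos\theta)$ contributed by the first integral — so one must keep careful track of the $h_0$-terms during the simplification. Everything else is elementary trigonometric bookkeeping.
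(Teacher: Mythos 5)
Your proof is correct and follows essentially the paper's route: the $L_1$ identity is obtained, exactly as in the paper, by differentiating \eqref{eq:ODE_theta_v1} and using $\cos^2\theta - \sin^2\theta = 1 - 2\sin^2\theta$. For $L_2$, the paper shortcuts your trigonometric bookkeeping by taking the second component of the already-established vector identity \eqref{eq:expr_H_w_h_0_v1}, $H(w_{h_0}) = \Lambda\, w_{h_0}$ with $\Lambda$ given by \eqref{eq:def_Lambda_v1}; since that identity was itself obtained from \eqref{eq:Lambda} and the first integral \eqref{eq:theta_en}, your direct substitution of \eqref{eq:ODE_theta_v1} and \eqref{eq:theta_en} into $\partial_{xx}(\sin\theta_{h_0})$ is the same argument with the intermediate step inlined rather than cited.
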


\begin{proof}
    Coming back to \eqref{eq:ODE_theta_v1} and differentiating with respect to $x$ since $\theta_{h_0}$ is smooth, we get
    \begin{equation*}
        0 = - \partial_{xx} (\partial_x \theta_{h_0}) + (\cos^2 \theta_{h_0} - \sin^2 \theta_{h_0}) \partial_x \theta_{h_0} + h_0 \, \partial_x \theta_{h_0} \, \cos \theta_{h_0} = L_1 \partial_x \theta_{h_0}.
    \end{equation*}
    On the other hand, as $w_{h_0} = (\cos \theta_{h_0}, \sin \theta_{h_0}, 0)$ satisfies \eqref{eq:expr_H_w_h_0_v1}, taking the second component of this ODE leads to
    \begin{equation*}
        \partial_{xx} (\sin \theta_{h_0}) - \sin \theta_{h_0} = \Lambda (x) \sin \theta_{h_0},
    \end{equation*}
    and therefore $L_2 \sin \theta_{h_0} = 0$ thanks to \eqref{eq:def_Lambda_v1}.
\end{proof}

\begin{rem} \label{rem:invariance_link}
    We point out that the functions $\partial_x \theta_{h_0}$ and $\sin \theta_{h_0}$ are actually related to the invariances of the equation and of the energies $E$ and $E_{h_0}$. Indeed, $\partial_x \theta_{h_0}$ is related to the invariance by translation as there holds
    \begin{equation*}
        w_{h_0} (\cdot + \varepsilon) = w_{h_0} + \varepsilon \partial_x \theta_{h_0} n_{h_0} + O (\varepsilon^2),
    \end{equation*}
    and $\sin \theta_{h_0}$ is related to the invariance by rotation along $e_1$:
    \begin{equation*}
        R_{\varepsilon} w_{h_0} = 
        \begin{pmatrix}
            \cos \theta_{h_0} \\
            \sin \theta_{h_0} \cos \varepsilon \\
            \sin \theta_{h_0} \sin \varepsilon
        \end{pmatrix}
        = w_{h_0} + \varepsilon \sin \theta_{h_0} e_3 + O (\varepsilon^2).
    \end{equation*}
\end{rem}

\subsubsection{Abstract approach}

The functions identified in the kernel provide deeper insights than initially apparent. Given that these operators are in dimension $d=1$, the Sturm-Liouville theory offers a more precise understanding of the nature of the eigenvalues of the operators $L_1$ and $L_2$.
To this end, we introduce two abstract lemmas in the following subsection. These lemmas not only address the existence (or absence) of eigenvalues in the interval $(-\infty, 0)$ but also provide precise estimates regarding the lack of coercivity of these operators.

\begin{lem} \label{lem:abstract_schro_op_1}
    Let $L = - \partial_{xx} + V$ where $V \in L^\infty (\mathbb R)$ such that $\lim_{\pm \infty} V > 0$.
    Assume there exists $\phi \in H^2$ such that $L \phi = 0$ and $\phi > 0$ in $\mathbb R$.
    Then $\operatorname{ker} L = {\mathbb{R}} \phi$ and there exists $\lambda > 0$ such that for all $v \in H^1$,
    \begin{gather*}
        0 \leq \langle L v, v \rangle_{{H^{-1}, H^1}} \leq \frac{1}{\lambda} \norm{v}_{H^1}^2, \\
        \langle L v, v \rangle_{{H^{-1}, H^1}} \geq 4 \lambda \norm{v}_{H^1}^2 - \frac{1}{\lambda} \langle v, \phi \rangle^2,
    \end{gather*}
    and, for all $v \in H^2$,
    \begin{equation*}
        \norm{L v}_{L^2}^2 \geq 4 \lambda \norm{v}_{H^2}^2 - \frac{1}{\lambda} \langle v, \phi \rangle^2.
    \end{equation*}
\end{lem}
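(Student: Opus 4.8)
The plan is to treat $L = -\partial_{xx} + V$ by one-dimensional Schrödinger operator theory, exploiting that the hypothesis $\phi > 0$ identifies $0$ as the \emph{ground state}. I would first settle the kernel: if $\psi \in H^2$ solves $L\psi = 0$, then $\psi$ and $\phi$ both solve the second order linear ODE $-u'' + Vu = 0$, so their Wronskian $W = \phi\,\partial_x\psi - \psi\,\partial_x\phi$ is constant on $\mathbb{R}$; since $\phi, \psi \in H^2(\mathbb R)$ and their first derivatives all tend to $0$ at $\pm\infty$, we get $W \equiv 0$, hence (using $\phi > 0$ everywhere) $\psi$ is a constant multiple of $\phi$ and $\ker L = \mathbb{R}\phi$. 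Nonnegativity $\langle Lv, v\rangle_{H^{-1},H^1} \geq 0$ then follows from the ground-state structure: a strictly positive $L^2$-eigenfunction is necessarily the lowest eigenfunction (Perron–Frobenius / Sturm oscillation), so $\inf\sigma(L) = 0$ and the quadratic form is nonnegative on its form domain $H^1$. Equivalently, the substitution $v = \phi u$ gives $\langle Lv, v\rangle = \int \phi^2 |\partial_x u|^2\,\diff x \geq 0$.

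Next I would quantify the coercivity, and here the hypothesis $V_\pm := \lim_{\pm\infty} V > 0$ is essential. By Weyl's theorem on the stability of the essential spectrum under potentials with limits at infinity, $\sigma_{\mathrm{ess}}(L) \subset [c_0, \infty)$ with $c_0 = \min(V_+, V_-) > 0$. Thus the eigenvalue $0$ lies strictly below $\sigma_{\mathrm{ess}}(L)$, so it is isolated and of finite multiplicity; combined with the ground-state property it is simple and equals $\inf\sigma(L)$. Setting $\mu := \operatorname{dist}(0, \sigma(L)\setminus\{0\}) > 0$, the spectral theorem yields, for every $w$ in the form domain with $w \perp \phi$ in $L^2$,
\begin{equation*}
\langle Lw, w\rangle_{H^{-1},H^1} \geq \mu \norm{w}_{L^2}^2, \qquad \norm{Lw}_{L^2} \geq \mu \norm{w}_{L^2} \ (\text{if } w \in H^2).
\end{equation*}

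To reach the stated $H^1$ and $H^2$ estimates I would split $v = a\phi + w$ with $a = \langle v, \phi\rangle / \norm{\phi}_{L^2}^2$ and $w \perp \phi$, noting $Lv = Lw$ and $\langle Lv, v\rangle = \langle Lw, w\rangle$ since $L\phi = 0$. For the $H^1$ lower bound, an interpolation trick upgrades the $L^2$-gap: writing $\langle Lw, w\rangle = \varepsilon\langle Lw, w\rangle + (1-\varepsilon)\langle Lw, w\rangle \geq \varepsilon(\norm{\partial_x w}_{L^2}^2 - \norm{V}_{L^\infty}\norm{w}_{L^2}^2) + (1-\varepsilon)\mu\norm{w}_{L^2}^2$ and choosing $\varepsilon$ small gives $\langle Lw, w\rangle \gtrsim \norm{w}_{H^1}^2$; then $\norm{w}_{H^1}^2 \geq \tfrac12\norm{v}_{H^1}^2 - a^2\norm{\phi}_{H^1}^2$ produces the desired form with a correction $-C\langle v, \phi\rangle^2$. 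For the $H^2$ bound, elliptic regularity $\partial_{xx} v = Vv - Lv$ gives $\norm{v}_{H^2} \lesssim \norm{v}_{L^2} + \norm{Lv}_{L^2}$, while $\norm{v}_{L^2}^2 = a^2\norm{\phi}_{L^2}^2 + \norm{w}_{L^2}^2 \leq \mu^{-2}\norm{Lv}_{L^2}^2 + \norm{\phi}_{L^2}^{-2}\langle v, \phi\rangle^2$; combining yields $\norm{v}_{H^2}^2 \lesssim \norm{Lv}_{L^2}^2 + \langle v, \phi\rangle^2$, i.e.\ the claimed inequality after rearranging. The upper bound $\langle Lv, v\rangle \leq \frac1\lambda\norm{v}_{H^1}^2$ is just boundedness of the form, $\langle Lv, v\rangle \leq \max(1, \norm{V}_{L^\infty})\norm{v}_{H^1}^2$. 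Finally I would fix a single $\lambda > 0$ small enough that $4\lambda$ stays below, and $1/\lambda$ above, all the constants produced; since every constraint on $\lambda$ is an upper bound, such a $\lambda$ exists.

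The main obstacle is the uniform spectral gap $\mu > 0$: it is exactly where the two hypotheses act in tandem — positivity of the limits $V_\pm$ forces the essential spectrum strictly above $0$ so that $0$ is isolated, and positivity of $\phi$ forces $0$ to be the \emph{simple, lowest} eigenvalue rather than an embedded or excited one. Once the gap is secured, the remaining steps are routine interpolation together with the orthogonal decomposition.
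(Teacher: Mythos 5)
Your proof is correct, but it is worth noting that the paper does not actually prove this lemma at all: its entire proof is the citation \cite[Lemma~C.1]{Cote_Ignat__stab_DW_LLG_DM}, so your argument is necessarily self-contained where the paper's is not. Your route --- Wronskian decay to pin down $\ker L = \mathbb{R}\phi$, positivity of $\phi$ to identify $0$ as the ground state (hence $\langle Lv,v\rangle_{H^{-1},H^1} \geq 0$), Weyl's theorem to place $\sigma_{\mathrm{ess}}(L) \subset [\min(V_+,V_-),\infty)$ so that $0$ is an isolated simple eigenvalue with spectral gap $\mu$, then the orthogonal splitting $v = a\phi + w$ and an interpolation in $\varepsilon$ to upgrade the $L^2$ gap to $H^1$ and $H^2$ coercivity --- is exactly the kind of argument the cited lemma encapsulates, and it closely parallels what this paper \emph{does} prove for the companion Lemma~\ref{lem:abstract_schro_op_2} (Sturm--Liouville input, a Rayleigh-quotient argument for the gap, then an explicit algebraic computation trading $\norm{Lv}_{L^2}$ against $\norm{v}_{H^2}$). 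Two remarks: first, your elliptic-regularity shortcut $\partial_{xx}v = Vv - Lv$, giving $\norm{v}_{H^2} \lesssim \norm{v}_{L^2} + \norm{Lv}_{L^2}$, yields the $H^2$ estimate more cleanly than the paper's expansion of $\norm{Lv}_{L^2}^2$ with the auxiliary constant $b$ in its proof of Lemma~\ref{lem:abstract_schro_op_2}; second, your factorization identity $\langle L(\phi u), \phi u\rangle = \int \phi^2 \abs{\partial_x u}^2 \diff x$ should strictly speaking be justified for general $v \in H^1$ by first taking $v \in \mathscr{C}_c^\infty$ (so that $u = v/\phi$ is admissible despite the decay of $\phi$ at infinity) and then invoking density and continuity of the quadratic form on $H^1$ --- though this is immaterial here, since your alternative Perron--Frobenius/orthogonality argument for nonnegativity avoids the issue entirely.
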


\begin{proof}
    See \cite[Lemma~C.1]{Cote_Ignat__stab_DW_LLG_DM}.
\end{proof}

\begin{lem} \label{lem:abstract_schro_op_2}
    Let $L = - \partial_{xx} + V$ where $V \in L^\infty (\mathbb R)$ such that $\ell \coloneqq \lim_{\pm \infty} V > 0$.
    Assume there exists $\phi \in H^2$ such that $L \phi = 0$ and that $\phi$ vanishes only once in $\mathbb R$. 
    Then it has a unique negative eigenvalue $\gamma < 0$, which is simple, and we denote $\psi \in H^2$ a normalized eigenfunction related to this eigenfunction: $\norm{\psi}_{L^2} = 1$ and $L \psi = \gamma \psi$.
    Its second eigenvalue is $0$, which is also simple: $\operatorname{ker} L = {\mathbb{R}} \phi$.
    As a consequence, there exists $\lambda > 0$ such that for all $v \in H^1$, there holds
    \begin{gather}
        \abs{\langle L v, v \rangle_{{H^{-1}, H^1}}} \leq \frac{1}{\lambda} \norm{v}_{H^1}^2, \label{eq:est_abs_L_1} \\
        \langle L v, v \rangle_{{H^{-1}, H^1}} \leq \frac{1}{\lambda} \norm{v - \langle v, \psi \rangle \psi}_{H^1}^2 - \lambda \langle v, \psi \rangle^2, \label{eq:est_abs_L_2}
    \end{gather}
    and, for all $v \in H^2$,
    \begin{equation*}
        \norm{L v}_{L^2}^2 \geq 4 \lambda \norm{v}_{H^2}^2 - \frac{1}{\lambda} \langle v, \phi \rangle^2.
    \end{equation*}
\end{lem}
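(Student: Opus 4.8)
The plan is to first determine the complete spectral picture of $L$ from the single hypothesis on $\phi$ via one-dimensional Sturm--Liouville theory, and then to read off the three estimates from this picture together with elementary quadratic-form bounds and elliptic regularity. First I would note that, since $V \in L^\infty$ and $V \to \ell > 0$ at $\pm\infty$, multiplication by $V - \ell$ is relatively compact with respect to $-\partial_{xx} + \ell$, so Weyl's theorem gives $\sigma_{\mathrm{ess}}(L) = [\ell, +\infty) \subset (0,+\infty)$. Everything in $(-\infty, \ell)$ thus consists of isolated eigenvalues of finite multiplicity, which for a one-dimensional Schrödinger operator are simple (two $L^2$ solutions of $-u'' + Vu = \lambda u$ have constant Wronskian equal to $0$, hence are proportional). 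Writing them increasingly as $\lambda_0 < \lambda_1 < \cdots$, the oscillation theorem tells us the eigenfunction of $\lambda_n$ has exactly $n$ zeros.

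The hypothesis provides $\phi \in H^2$ with $L\phi = 0$ and exactly one zero, so $0 = \lambda_n$ with $n$ the number of zeros of $\phi$, i.e.\ $n = 1$: the eigenvalue $0$ is the second one. This immediately yields that $0$ is simple with $\ker L = \mathbb R \phi$, and that there is exactly one strictly negative eigenvalue $\gamma := \lambda_0 < 0$, simple, whose normalized eigenfunction $\psi$ has no zero (so $\psi > 0$ up to sign). This settles the two spectral assertions. I would also record the spectral gap $\delta := \mathrm{dist}(0, \sigma(L) \setminus \{0\}) = \min(\abs{\gamma}, \mu) > 0$, where $\mu > 0$ is the smallest positive point of $\sigma(L)$ (the next eigenvalue if one sits in $(0, \ell)$, otherwise $\ell$).

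For the estimates, set $C_0 := \max(1, \norm{V}_{L^\infty})$. The crude form bound $\abs{\langle Lv, v\rangle} \le \norm{\partial_x v}_{L^2}^2 + \norm{V}_{L^\infty}\norm{v}_{L^2}^2 \le C_0 \norm{v}_{H^1}^2$ gives \eqref{eq:est_abs_L_1} once $\lambda \le 1/C_0$. For \eqref{eq:est_abs_L_2} I would decompose $v = \langle v,\psi\rangle\,\psi + w$ with $w \perp \psi$ in $L^2$; since $L\psi = \gamma\psi$ and $L$ is self-adjoint, the cross terms drop and $\langle Lv,v\rangle = \gamma\langle v,\psi\rangle^2 + \langle Lw,w\rangle$, so bounding $\langle Lw,w\rangle \le C_0\norm{w}_{H^1}^2$ and taking $\lambda \le \min(\abs{\gamma}, 1/C_0)$ produces \eqref{eq:est_abs_L_2} verbatim. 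For the last inequality I would let $P$ be the $L^2$-orthogonal projection onto $\mathbb R\phi$ and $v^\perp := v - Pv$; because $L\phi = 0$ one has $Lv = Lv^\perp$, and the spectral gap gives $\norm{Lv}_{L^2} = \norm{Lv^\perp}_{L^2} \ge \delta\norm{v^\perp}_{L^2}$. The elliptic bound $\norm{v^\perp}_{H^2} \lesssim \norm{Lv^\perp}_{L^2} + \norm{v^\perp}_{L^2}$ (from $-\partial_{xx}v^\perp = Lv^\perp - Vv^\perp$) then upgrades this to $\norm{Lv}_{L^2} \ge c\,\norm{v^\perp}_{H^2}$, and finally $\norm{v^\perp}_{H^2}^2 \ge \tfrac12\norm{v}_{H^2}^2 - C\langle v,\phi\rangle^2$ (as $Pv$ is a multiple of $\phi$ controlled by $\langle v,\phi\rangle$) gives the stated bound for $\lambda$ small enough.

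The hard part will be the second step: extracting the whole spectral picture from the sole information that $\phi$ vanishes once. This rests on the Sturm--Liouville oscillation theorem on the full line, which I would either invoke from the literature or justify through a Prüfer-angle / rotation-number argument, together with the simplicity of eigenvalues of one-dimensional Schrödinger operators. One also has to make sure $0$ is a genuine isolated eigenvalue and not embedded in the essential spectrum, which is exactly where the assumption $\ell = \lim_{\pm\infty} V > 0$ is used. Since \cite[Lemma~C.1]{Cote_Ignat__stab_DW_LLG_DM} handles the companion Lemma~\ref{lem:abstract_schro_op_1} (the case $\phi > 0$), I expect the oscillation input and the projection/elliptic-regularity bookkeeping above to follow the same template.
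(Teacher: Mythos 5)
Your proposal is correct. The spectral part (Weyl for $\sigma_{\mathrm{ess}}(L) = [\ell,\infty)$, simplicity via the Wronskian, and the oscillation theorem identifying $0$ as the second eigenvalue $\lambda_1$ from the single zero of $\phi$, hence a unique simple $\gamma = \lambda_0 < 0$ and a gap above $0$) is the same route as the paper, which delegates exactly this input to Sturm--Liouville theory in the literature; your derivations of \eqref{eq:est_abs_L_1} and \eqref{eq:est_abs_L_2} (decompose $v = \langle v,\psi\rangle\psi + w$, kill cross terms by self-adjointness, bound $\langle Lw,w\rangle \leq C_0\norm{w}_{H^1}^2$) also coincide with the paper's computation. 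Where you genuinely diverge is the $H^2$ estimate. The paper introduces $a \coloneqq \inf\{\langle Lv,v\rangle \,:\, \norm{v}_{L^2}=1,\ \langle v,\phi\rangle = \langle v,\psi\rangle = 0\}$, identifies $a$ with the third Rayleigh quotient $\mu_3(L)$ through the appendix result (Lemma \ref{lem:link_a_rayleigh_quotient}) to conclude $a>0$, upgrades $\norm{Lv}_{L^2} \geq a \norm{v}_{L^2}$ to an $H^2$ bound by an explicit algebraic manipulation involving $V$, and then must subtract \emph{both} projections (onto $\phi$ and onto $\psi$) from a general $v \in H^2$, using $\norm{L(v - \langle v,\psi\rangle\psi - \tfrac{\langle v,\phi\rangle}{\norm{\phi}_{L^2}^2}\phi)}_{L^2}^2 = \norm{Lv}_{L^2}^2 - \gamma^2\langle v,\psi\rangle^2$ and the constraint $b' \leq \gamma^2/(2\norm{\psi}_{H^2}^2)$ to absorb the $\psi$-component. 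Your route instead projects away only $\operatorname{ker} L$, uses the functional-calculus gap bound $\norm{Lv^\perp}_{L^2} \geq \delta \norm{v^\perp}_{L^2}$ with $\delta = \operatorname{dist}(0,\sigma(L)\setminus\{0\})$ --- valid with no orthogonality to $\psi$, since $\abs{\gamma} \geq \delta$ covers that direction automatically --- and upgrades to $H^2$ by elliptic regularity ($-\partial_{xx}v^\perp = Lv^\perp - Vv^\perp$), so only the single projection onto $\mathbb{R}\phi$ has to be reinstated at the end. Both arguments yield the stated inequality; yours is shorter, dispenses with the Rayleigh-quotient appendix lemma and the $\gamma^2$ bookkeeping, at the mild cost of invoking the spectral theorem for self-adjoint operators rather than staying at the level of quadratic forms and min--max.
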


\begin{proof}
    Using Sturm-Liouville theory on $\mathbb R$ (see \cite[Chapter~B.5]{Angulo_Pava_nonlin_disp}, especially the proof of Theorem B.61 which can be easily adapted to this case), the fact that $\phi$ vanishes once shows that $0$ is the second eigenvalue for $L$, there is a unique negative eigenvalue $\gamma < 0$ so that $L - \gamma$ is a positive operator, and both {$0$ and $\gamma$} are simple {eigenvalues}. It also means that there exists $\delta > 0$ such that every $\lambda_0 \in \sigma (L) \setminus \{ \gamma, 0 \}$ (where $\sigma (L)$ is the spectrum of $L$) satisfies $\lambda_0 \geq \delta$.

    Denoting $\psi$ a normalized eigenfunction related to $\gamma$, we can easily derive the first two estimates. From classic estimates, there holds $\psi \in H^2$. First, \eqref{eq:est_abs_L_1} is an easy consequence of the fact that $L$ is a Schrödinger operator with an $L^\infty$ potential. Therefore, for any $v \in H^1$, denoting $\tilde v = v - \langle v, \psi \rangle \psi$, we get:
    \begin{align*}
        \frac{1}{\lambda} \norm{\tilde v}_{{H^1}}^2 \geq \langle L \tilde v, \tilde v \rangle_{{H^{-1}, H^1}} &= \langle L v, v \rangle_{{H^{-1}, H^1}} - \langle v, \psi \rangle \langle L \psi, v \rangle_{{H^{-1}, H^1}} - \langle v, \psi \rangle \langle \psi, L v \rangle_{{H^{-1}, H^1}} + \langle v, \psi \rangle^2 \langle L \psi, \psi \rangle_{{H^{-1}, H^1}} \\
            &= \langle L v, v \rangle_{{H^{-1}, H^1}} - 2 \langle v, \psi \rangle \langle L \psi, v \rangle + \langle v, \psi \rangle^2 \langle L \psi, \psi \rangle \\
            &= \langle L v, v \rangle_{{H^{-1}, H^1}} - 2 \gamma \langle v, \psi \rangle^2 + \gamma \langle v, \psi \rangle^2 \\
            &= \langle L v, v \rangle_{{H^{-1}, H^1}} - \gamma \langle \psi, v \rangle^2,
    \end{align*}
    which gives \eqref{eq:est_abs_L_2} by the fact that $\gamma < 0$, up to decreasing $\lambda$.
    Last, let
    \begin{equation*}
        a \coloneqq \inf \{ \langle L v, v \rangle_{{H^{-1}, H^1}} \, | \, v \in H^1, \norm{v}_{L^2} = 1, \langle v, \phi \rangle = \langle v, \psi \rangle = 0 \}.
    \end{equation*}
    We aim to prove that $a > 0$.
    This quantity is actually related to the third Rayleigh quotient (see Lemma \ref{lem:link_a_rayleigh_quotient}):
    \begin{equation*}
        a = \mu_3 (L) \coloneqq \sup_{\psi_1, \psi_2} \inf_{u \in \operatorname{span} (\psi_1, \psi_2)^\perp \cap H^1 \setminus \{ 0 \}} \frac{\langle L u, u \rangle_{{H^{-1}, H^1}}}{\norm{u}_{L^2}^2}.
    \end{equation*}
    Moreover, we know that, either $\mu_3 (L)$ is the third eigenvalue (counted with multiplicity) of $L$ or the bottom of its essential spectrum.
    By the assumption $\lim_{\pm \infty} V = \ell > 0$, it is known that the essential spectrum of $L$ is $[\ell, \infty)$. On the other hand, $0$ is the second eigenvalue and it is simple. Thus, we conclude that $a > 0$.
    Therefore, if $v \in H^2$ with $\langle v, \psi \rangle = \langle v, \sin \theta_{h_0} \rangle = 0$, there holds
    \begin{equation*}
        \norm{L v}_{L^2} \norm{v}_{L^2} \geq \langle L v, v \rangle \geq a \norm{v}_{L^2}^2,
    \end{equation*}
    which yields $\norm{L v}_{L^2} \geq a \norm{v}_{L^2}$.
    Let $b \coloneqq \frac{a}{1 + 2 a + 2 \norm{V}_{L^\infty}} \in (0, \frac{1}{2})$. We thus get
    \begin{align*}
        \norm{L v}_{L^2}^2 &= 2 b \norm{\Delta v}_{L^2}^2 + (1 - 2 b) \langle L v, V v \rangle + 2 b \langle \Delta v, V v \rangle + 2 b \norm{V v}_{L^2}^2 \\
            &\geq 2 b \norm{\Delta v}_{L^2}^2 + a (1 - 2 b) \norm{v}_{L^2}^2 - 4 b \norm{\Delta v}_{L^2} \norm{V v}_{L^2} + 2 b \norm{V v}_{L^2}^2 \\
            &\geq 2 b \norm{\Delta v}_{L^2}^2 + a (1 - 2 b) \norm{v}_{L^2}^2 - b \norm{\Delta v}_{L^2}^2 - 4 b \norm{V v}_{L^2}^2 + 2 b \norm{V v}_{L^2}^2 \\
            &\geq b \norm{\Delta v}_{L^2}^2 + a (1 - 2 b) \norm{v}_{L^2}^2 - 2 b \norm{V v}_{L^2}^2 \\
            &\geq b \norm{\Delta v}_{L^2}^2 + \underbrace{(a (1 - 2 b) - 2 b \norm{V}_{L^\infty}^2)}_{= b} \norm{v}_{L^2}^2 = b (\norm{\Delta v}_{L^2}^2 + \norm{v}_{L^2}^2).
    \end{align*}
    As $\norm{\Delta v}_{L^2} + \norm{v}_{L^2}$ controls the $H^2$ norm, we can conclude that, for some $b' > 0$,
    \begin{equation*}
        \norm{L v}_{L^2}^2 \geq b' \norm{v}_{H^2}^2.
    \end{equation*}
    Let us emphasize that we can reduce $b'$, therefore we can assume that $b'$ is as small as we want it to be.
    Now, if we take a general $v \in H^2$, we can apply the previous estimate to $v - \langle v, \psi \rangle \psi - \frac{1}{\norm{\phi}_{L^2}^2} \langle v, \phi \rangle \phi$. Moreover,
    \begin{align*}
        \norm{L \Bigl( v - \langle v, \psi \rangle \psi - \frac{1}{\norm{\phi}_{L^2}^2} \langle v, \phi \rangle \phi \Bigr)}_{L^2}^2 &= \norm{L v - \gamma \langle v, \psi \rangle \psi}_{L^2}^2 \\
            &= \norm{L v}_{L^2}^2 - 2 \gamma \langle v, \psi \rangle \underbrace{\langle L v, \psi \rangle}_{= \langle v, L \psi \rangle = \gamma \langle v, \psi \rangle} + \gamma^2 \langle v, \psi \rangle^2 \\
            &= \norm{L v}_{L^2}^2 - \gamma^2 \langle v, \psi \rangle^2.
    \end{align*}
    On the other hand,
    \begin{equation*}
        \norm{v - \langle v, \psi \rangle \psi - \frac{1}{\norm{\phi}_{L^2}^2} \langle v, \phi \rangle \phi}_{H^2}^2 \geq \frac{1}{2} \norm{v}_{H^2}^2 - 2 \langle v, \psi \rangle^2 \norm{\psi}_{H^2}^2 - \frac{2}{\norm{\phi}_{L^2}^4} \langle v, \phi \rangle^2 \norm{\phi}_{H^2}^2,
    \end{equation*}
    where we have used $(a - b)^2 \geq \frac{1}{2} a^2 - b^2$ and $(b + c)^2 \leq 2 b^2 + 2 c^2$, so that $(a - b - c)^2 \geq \frac{1}{2} a^2 - 2 b^2 - 2 c^2$.
    Therefore, we obtain
    \begin{align*}
        \norm{L v}_{L^2}^2 &\geq \frac{b'}{2} \norm{v}_{H^2}^2 + \Bigl( \gamma^2 - 2 b' \norm{\psi}_{H^2}^2 \Bigr) \langle v, \psi \rangle^2 - \frac{2 b'}{\norm{\phi}_{L^2}^4} \langle v, \phi \rangle^2 \norm{\phi}_{H^2}^2 \\
            &\geq \frac{b'}{2} \norm{v}_{H^2}^2 - \frac{2 b'}{\norm{\phi}_{L^2}^4} \langle v, \phi \rangle^2 \norm{\phi}_{H^2}^2,
    \end{align*}
    by assuming $b' \leq \frac{\gamma^2}{2 \norm{\psi}_{H^2}^2}$.
\end{proof}

\subsubsection{Application to $L_1$ and $L_2$}

The previous lemmas, Lemma \ref{lem:abstract_schro_op_1} and Lemma \ref{lem:abstract_schro_op_2}, when applied to the two operators, offer valuable insights into the spectral properties of $L_1$ and $L_2$. However, the behaviour of the two functions in the kernel ($\partial_x \theta_{h_0}$ and $\sin \theta_{h_0}$) depends on the sign of $h_0$ (see Theorem \ref{th:stat_sol}). Therefore, we have to distinguish the two cases.

\paragraph{For $h_0 > 0$.}

In this case, $\partial_x \theta_{h_0}$ remains positive on $\mathbb R$, whereas $\sin \theta_{h_0}$ vanishes at $x=0$.

\begin{lem} \label{lem:schro_op_1}
    Assume that $h_0 > 0$.
    \begin{itemize}
        \item $L_1$ is a self-adjoint positive operator on $L^2$ with dense domain $H^2$, and has $0$ as first simple eigenvalue with eigenfunction $\partial_x \theta_{h_0} > 0$:
        \begin{equation} \label{eq:kernel_L1}
            L_1 \partial_x \theta_{h_0} = 0.
        \end{equation}
        As a consequence, there exists $\lambda_1 > 0$ such that for all $v \in H^1$,
        \begin{gather*}
            0 \leq \langle L_1 v, v \rangle_{{H^{-1}, H^1}} \leq \frac{1}{\lambda_1} \norm{v}_{H^1}^2, \\
            \langle L_1 v, v \rangle_{{H^{-1}, H^1}} \geq 4 \lambda_1 \norm{v}_{H^1}^2 - \frac{1}{\lambda_1} \langle v, \partial_x \theta_{h_0} \rangle^2,
        \end{gather*}
        and for all $v \in H^2$
        \begin{equation*}
            \norm{L_1 v}_{L^2}^2 \geq 4 \lambda_1 \norm{v}_{H^2}^2 - \frac{1}{\lambda_1} \langle v, \partial_x \theta_{h_0} \rangle^2.
        \end{equation*}
        \item $L_2$ is a self-adjoint operator on $L^2$ with dense domain $H^2$. It has a unique negative eigenvalue $\gamma_2 < 0$, which is simple, and we denote $\psi_{h_0} \in H^2$ a normalized eigenfunction related to this eigenvalue: $\norm{\psi_{h_0}}_{L^2} = 1$ and
        \begin{equation*}
            L_2 \psi_{h_0} = \gamma_2 \psi_{h_0}.
        \end{equation*}
        Its second eigenvalue is $0$, which is also simple, with eigenfunction $\sin \theta_{h_0}$:
        \begin{equation} \label{eq:kernel_L2}
            L_2 \sin \theta_{h_0} = 0.
        \end{equation}
        As a consequence, there exists $\lambda_2 > 0$ such that for all $v \in H^1$, there holds
        \begin{gather*}
            \langle L_2 v, v \rangle_{{H^{-1}, H^1}} \leq \frac{1}{\lambda_2} \norm{v - \langle v, \psi_{h_0} \rangle \psi_{h_0}}_{H^1}^2 - \lambda_2 \langle v, \psi_{h_0} \rangle^2, \\
            \abs{\langle L_2 v, v \rangle_{{H^{-1}, H^1}}} \leq \frac{1}{\lambda_2} \norm{v}_{H^1}^2,
        \end{gather*}
        and for all $v \in H^2$,
        \begin{equation*}
            \norm{L_2 v}_{L^2}^2 \geq 4 \lambda_2 \norm{v}_{H^2}^2 - \frac{1}{\lambda_2} \langle v, \sin \theta_{h_0} \rangle^2.
        \end{equation*}
    \end{itemize}
\end{lem}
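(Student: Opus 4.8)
The plan is to recognize this lemma as a direct application of the two abstract results proved above, namely Lemma \ref{lem:abstract_schro_op_1} for $L_1$ and Lemma \ref{lem:abstract_schro_op_2} for $L_2$. Consequently, the whole task reduces to checking the hypotheses of those lemmas for our two concrete operators, using the properties of $\theta_{h_0}$ gathered in Theorem \ref{th:stat_sol}, Proposition \ref{prop:kernel_schro_op} and Lemma \ref{lem:w_h_0_exp}. The self-adjointness with dense domain $H^2$ is the standard Schrödinger-operator fact already recalled before the statement, so I will not dwell on it.

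For $L_1 = -\partial_{xx} + V_1$ with $V_1 = 1 - 2\sin^2\theta_{h_0} + h_0\cos\theta_{h_0}$, I would first verify the assumptions of Lemma \ref{lem:abstract_schro_op_1}. The potential $V_1$ is bounded since $\theta_{h_0}$ is smooth with finite limits, and because $\theta_{h_0} \to 0$ at $-\infty$ and $\theta_{h_0} \to 2\pi$ at $+\infty$ (so $\sin\theta_{h_0}\to 0$ and $\cos\theta_{h_0}\to 1$), one gets $\lim_{\pm\infty} V_1 = 1 + h_0 > 0$. Proposition \ref{prop:kernel_schro_op} supplies the kernel element $\phi = \partial_x\theta_{h_0}$ with $L_1\phi = 0$; this function is strictly positive on $\mathbb R$ because Step 6 of the proof of Theorem \ref{th:stat_sol} gives $\partial_x\theta_{h_0} > 0$, and it lies in $H^2$ (in fact in every $H^k$) by the exponential decay of Lemma \ref{lem:w_h_0_exp}. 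Lemma \ref{lem:abstract_schro_op_1} then yields at once that $0$ is a simple eigenvalue with $\ker L_1 = \mathbb R\,\partial_x\theta_{h_0}$ and that the three announced estimates hold, with $\lambda_1 = \lambda$.

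For $L_2 = -\partial_{xx} + V_2$ with $V_2 = 1 - 2\sin^2\theta_{h_0} + 3h_0\cos\theta_{h_0} - 2h_0$, I would instead invoke Lemma \ref{lem:abstract_schro_op_2}. The same asymptotic analysis gives $V_2\in L^\infty$ and $\lim_{\pm\infty} V_2 = 1 + 3h_0 - 2h_0 = 1 + h_0 > 0$. The relevant kernel element is now $\phi = \sin\theta_{h_0}$, with $L_2\sin\theta_{h_0} = 0$ from Proposition \ref{prop:kernel_schro_op} and $\sin\theta_{h_0}\in H^2$ again by Lemma \ref{lem:w_h_0_exp}. The decisive difference with $L_1$ is that $\sin\theta_{h_0}$ vanishes exactly once: since $\theta_{h_0}$ is a strictly increasing bijection of $\mathbb R$ onto $(0,2\pi)$ with $\theta_{h_0}(0) = \pi$, the only zero of $\sin\theta_{h_0}$ corresponds to $\theta_{h_0} = \pi$, i.e. to $x = 0$. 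Lemma \ref{lem:abstract_schro_op_2} then delivers the full spectral picture—a unique simple negative eigenvalue $\gamma_2$ with normalized eigenfunction $\psi_{h_0}$, and $0$ as a simple second eigenvalue with eigenfunction $\sin\theta_{h_0}$—together with the two quadratic-form bounds and the $L^2$ bound, all with $\lambda_2 = \lambda$.

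Since the substantive Sturm–Liouville and coercivity arguments are already encapsulated in the two abstract lemmas, the only genuine verification here is the sign and zero structure of the two kernel functions, and that follows immediately from the monotonicity of $\theta_{h_0}$ and the value $\theta_{h_0}(0) = \pi$. I therefore do not expect any real obstacle in this proof.
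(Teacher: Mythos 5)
Your proposal is correct and follows essentially the same route as the paper's own proof: verifying that the potentials of $L_1$ and $L_2$ have limit $1+h_0>0$ at infinity, invoking Proposition \ref{prop:kernel_schro_op} for the kernel functions, and then applying Lemma \ref{lem:abstract_schro_op_1} to $L_1$ (using $\partial_x\theta_{h_0}>0$) and Lemma \ref{lem:abstract_schro_op_2} to $L_2$ (using that $\sin\theta_{h_0}$ vanishes only at $x=0$ by the monotonicity of $\theta_{h_0}$). The only difference is that you spell out a few more of the routine verifications (exponential decay, the zero-counting argument), which the paper states more tersely.
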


\begin{proof}
    First of all, we emphasize that $L_1$ and $L_2$ are two self-adjoint Schrödinger operators, whose potentials have finite limits at infinity. Indeed, since $\lim_{- \infty} \theta_{h_0} = 0$ and $\lim_{+ \infty} \theta_{h_0} = 2 \pi$, we have
    \begin{equation*}
        \lim_{\pm \infty} 1 - 2 \sin^2 \theta_{h_0} + h_0 \cos \theta_{h_0} = \lim_{\pm \infty} 1 - 2 \sin^2 \theta_{h_0} + 3 h_0 \cos \theta_{h_0} - 2 h_0 = 1 + h_0.
    \end{equation*}
    We also have \eqref{eq:kernel_L1} by Proposition \ref{prop:kernel_schro_op}.
    Since $\partial_x \theta_{h_0} \in H^2$ and $\partial_x \theta_{h_0} > 0$ (see \eqref{eq:ODE_theta_2_v1}), we can apply Lemma \ref{lem:abstract_schro_op_1}.

    On the other hand, we have \eqref{eq:kernel_L2} by Proposition \ref{prop:kernel_schro_op}.
    However, $\sin \theta_{h_0}$ vanishes at $x=0$ and its sign changes at this point. Nonetheless, as $\theta_{h_0}$ is increasing with values in $(0, 2 \pi)$, we know that it is the only point where $\sin \theta_{h_0}$ vanishes.
    Thus, we can apply Lemma \ref{lem:abstract_schro_op_2}, leading to the conclusion.
\end{proof}

\begin{rem} \label{rem:eigenvalue_L2_h0_pos}
    Let us point out that, due to \eqref{eq:kernel_L1} and since $2 h_0 (1 - \cos \theta_{h_0}) > 0$ on $\mathbb R$, we obtain that
    \begin{equation*}
        \langle L_2 \partial_x \theta_{h_0}, \partial_x \theta_{h_0} \rangle = - 2 h_0 \int (1 - \cos \theta_{h_0}) \abs{\partial_x \theta_{h_0}}^2 \diff x < 0.
    \end{equation*}
    Moreover, since $L_1$ has no negative eigenvalue, it is positive, so that for any $f \in H^2$,
    \begin{equation*}
        \langle L_2 f, f \rangle = \langle L_1 f, f \rangle - 2 h_0 \int (1 - \cos \theta_{h_0}) \abs{f}^2 \diff x \geq - 2 h_0 \int (1 - \cos \theta_{h_0}) \abs{f}^2 \diff x \geq - 4 h_0 \norm{f}_{L^2}^2.
    \end{equation*}
    With its link with the first Rayleigh quotient (see Appendix \ref{sec:app_rayleigh}), we can probably estimate that the first eigenvalue has the same order as $- h_0$, at least when $h_0$ is small.
\end{rem}

\paragraph{For $h_0 \in (-1, 0)$.}

If $h_0 \in (-1, 0)$, conversely, $\sin \theta_{h_0}$ remains positive on $\mathbb R$, whereas $\partial_x \theta_{h_0}$ vanishes at $x=0$.

\begin{lem} \label{lem:schro_op_2}
    Assume that $h_0 \in (-1, 0)$.
    \begin{itemize}
        \item $L_1$ is a self-adjoint operator on $L^2$ with dense domain $H^2$. It has a unique negative eigenvalue $\gamma_1 < 0$, which is simple, and we denote $\psi_{h_0} \in H^2$ a normalized eigenfunction related to this eigenvalue: $\norm{\psi_{h_0}}_{L^2} = 1$ and
        \begin{equation*}
            L_1 \psi_{h_0} = \gamma_1 \psi_{h_0}.
        \end{equation*}
        Its second eigenvalue is $0$, which is also simple, with eigenfunction $\partial_x \theta_{h_0}$:
        \begin{equation} \label{eq:kernel_L1_2}
            L_1 \partial_x \theta_{h_0} = 0.
        \end{equation}
        As a consequence, there exists $\lambda_1 > 0$ such that for all $v \in H^1$,
        \begin{gather*}
            \langle L_1 v, v \rangle \leq \frac{1}{\lambda_1} \norm{v - \langle v, \psi_{h_0} \rangle \psi_{h_0}}_{H^1} - \lambda_1 \langle v, \psi_{h_0} \rangle^2, \\
            \abs{\langle L_1 v, v \rangle} \leq \frac{1}{\lambda_1} \norm{v}_{H^1}^2,
        \end{gather*}
        and for all $v \in H^2$,
        \begin{equation*}
            \norm{L_1 v}_{L^2}^2 \geq 4 \lambda_1 \norm{v}_{H^2}^2 - \frac{1}{\lambda_1} \langle v, \sin \theta_{h_0} \rangle^2.
        \end{equation*}
        \item $L_2$ is a self-adjoint positive operator on $L^2$ with dense domain $H^2$, and has $0$ as first simple eigenvalue with eigenfunction $\sin \theta_{h_0} > 0$:
        \begin{equation} \label{eq:kernel_L2_2}
            L_2 \sin \theta_{h_0} = 0.
        \end{equation}
        As a consequence, there exists $\lambda_2 > 0$ such that for all $v \in H^1$, there holds
        \begin{gather*}
            0 \leq \langle L_2 v, v \rangle \leq \frac{1}{\lambda_2} \norm{v}_{H^1}^2, \\
            \langle L_2 v, v \rangle \geq 4 \lambda_2 \norm{v}_{H^1}^2 - \frac{1}{\lambda_2} \langle v, \partial_x \theta_{h_0} \rangle^2,
        \end{gather*}
        and for all $v \in H^2$
        \begin{equation*}
            \norm{L_2 v}_{L^2}^2 \geq 4 \lambda_2 \norm{v}_{H^2}^2 - \frac{1}{\lambda_2} \langle v, \partial_x \theta_{h_0} \rangle^2.
        \end{equation*}
    \end{itemize}
\end{lem}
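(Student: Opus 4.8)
The plan is to rerun the argument of Lemma~\ref{lem:schro_op_1} with the roles of $L_1$ and $L_2$ exchanged. For $h_0\in(-1,0)$, Theorem~\ref{th:stat_sol} gives that $\theta_{h_0}$ is even, with a strict interior maximum $\theta_c=\arccos(-1-2h_0)\in(0,\pi)$ at $x=0$, increasing on $(-\infty,0)$ and decreasing on $(0,\infty)$. Hence $\partial_x\theta_{h_0}$ vanishes exactly once (at $x=0$), where it changes sign, while $\theta_{h_0}(x)\in(0,\theta_c]\subset(0,\pi)$ forces $\sin\theta_{h_0}>0$ on $\mathbb R$. This is the exact mirror of the $h_0>0$ situation: now it is $L_1$ whose kernel generator changes sign, so $L_1$ carries the negative eigenvalue, whereas $L_2$ is positive.

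First I would record that $L_1,L_2$ are self-adjoint on $L^2$ with domain $H^2$ (bounded potentials), and that, since $\lim_{\pm\infty}\theta_{h_0}=0$, both potentials converge to $1+h_0>0$ at $\pm\infty$; thus their essential spectrum is $[1+h_0,\infty)$ and any nonpositive spectrum is discrete. By Proposition~\ref{prop:kernel_schro_op}, $L_1\partial_x\theta_{h_0}=0$ and $L_2\sin\theta_{h_0}=0$, and both kernel generators lie in $H^2$ by the exponential decay of Lemma~\ref{lem:w_h_0_exp}. I would then apply Lemma~\ref{lem:abstract_schro_op_2} to $L_1$ with $\phi=\partial_x\theta_{h_0}$ (which vanishes once): this yields the unique, simple negative eigenvalue $\gamma_1<0$ with normalized eigenfunction $\psi_{h_0}$, the simple second eigenvalue $0$ with eigenfunction $\partial_x\theta_{h_0}$, the two $H^1$ bounds \eqref{eq:est_abs_L_1}--\eqref{eq:est_abs_L_2} expressed through $\psi_{h_0}$, and the $H^2$ bound. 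Symmetrically, Lemma~\ref{lem:abstract_schro_op_1} applied to $L_2$ with $\phi=\sin\theta_{h_0}>0$ gives positivity, $0$ as the simple bottom eigenvalue with eigenfunction $\sin\theta_{h_0}$, and the corresponding $H^1$ and $H^2$ coercivity bounds.

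The point requiring genuine care, and where this case departs from Lemma~\ref{lem:schro_op_1}, is the direction of the projection entering the defect of the coercivity bounds: the abstract lemmas control each $H^2$ bound (and the $L_2$ coercivity bound) by the kernel of the operator at hand, whereas the statement records these defects in crossed form ($\sin\theta_{h_0}$ for the $L_1$ bound, $\partial_x\theta_{h_0}$ for the $L_2$ bounds). The tool that links the two modes is the orthogonality
\begin{equation*}
    \langle \partial_x\theta_{h_0},\sin\theta_{h_0}\rangle=\int_{\mathbb R}\partial_x\bigl(1-\cos\theta_{h_0}\bigr)\,\diff x=0,
\end{equation*}
which holds because $\theta_{h_0}$ is even, so that $\partial_x\theta_{h_0}$ and $\sin\theta_{h_0}$ have opposite parity (odd and even, the potentials being even and $L_1,L_2$ hence parity-preserving). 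These are precisely the translation and rotation modes that are removed \emph{together} by the modulation of Section~\ref{sec:instability}, and I would use the orthogonality, along with the even/odd splitting of $v$, to carry the defect terms in the form recorded in the statement.

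I expect this reconciliation of the projection directions, together with the verification that $\partial_x\theta_{h_0}$ has a single zero, to be the crux. The node count is what places $0$ as the second eigenvalue of $L_1$ and thereby unlocks Lemma~\ref{lem:abstract_schro_op_2}; it follows from the monotonicity of $\theta_{h_0}$ obtained in Steps~7--8 of the proof of Theorem~\ref{th:stat_sol}. Once the defects are matched and both modes are understood to be projected out in the application, the remainder of the statement is a direct transcription of the two abstract lemmas.
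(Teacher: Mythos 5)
Your main line of argument coincides with the paper's proof, which is short and does exactly what you describe: the potentials of $L_1$ and $L_2$ tend to $1+h_0>0$ at $\pm\infty$ since $\lim_{\pm\infty}\theta_{h_0}=0$, the kernel relations come from Proposition~\ref{prop:kernel_schro_op}, Lemma~\ref{lem:abstract_schro_op_1} applies to $L_2$ with $\phi=\sin\theta_{h_0}>0$, and Lemma~\ref{lem:abstract_schro_op_2} applies to $L_1$ with $\phi=\partial_x\theta_{h_0}$, which vanishes exactly once (positive on $(-\infty,0)$, negative on $(0,\infty)$, by Steps~7--8 of the proof of Theorem~\ref{th:stat_sol}). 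Up to that point your proposal is correct and complete.

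The problem is the step you yourself single out as the crux: converting the defect terms into the ``crossed'' form printed in the statement ($\sin\theta_{h_0}$ in the $L_1$ bound, $\partial_x\theta_{h_0}$ in the $L_2$ bounds) via the orthogonality $\langle \partial_x\theta_{h_0},\sin\theta_{h_0}\rangle=0$. This step cannot be carried out, because the crossed inequalities are false as written. Test $v=\sin\theta_{h_0}$ in the claimed bound
\begin{equation*}
    \langle L_2 v, v\rangle \geq 4\lambda_2 \norm{v}_{H^1}^2 - \frac{1}{\lambda_2}\langle v, \partial_x\theta_{h_0}\rangle^2:
\end{equation*}
the left-hand side vanishes since $L_2\sin\theta_{h_0}=0$, while by the very orthogonality you invoke, $\langle \sin\theta_{h_0},\partial_x\theta_{h_0}\rangle=0$, so the right-hand side equals $4\lambda_2\norm{\sin\theta_{h_0}}_{H^1}^2>0$ --- a contradiction; the same test refutes the crossed $H^2$ bound for $L_2$, and $v=\partial_x\theta_{h_0}$ refutes the crossed $H^2$ bound for $L_1$. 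In other words, the orthogonality of the two modes is not a bridge between the projections but precisely the reason the crossed form fails on kernel elements: the defect in a coercivity bound for an operator with nontrivial kernel must project onto \emph{that operator's} kernel. The correct reading is that the statement carries typos (projections swapped relative to the pattern of Lemma~\ref{lem:schro_op_1}, plus a missing square on $\norm{v-\langle v,\psi_{h_0}\rangle\psi_{h_0}}_{H^1}$, compare \eqref{eq:est_abs_L_2}): the bounds should read $\langle v,\partial_x\theta_{h_0}\rangle^2$ for $L_1$ and $\langle v,\sin\theta_{h_0}\rangle^2$ for $L_2$, which is exactly what Lemmas~\ref{lem:abstract_schro_op_1} and~\ref{lem:abstract_schro_op_2} deliver and what is actually used downstream: Lemma~\ref{lem:summary_expand_dissipation} carries the defect $\langle\nu,\partial_x\theta_{h_0}\rangle^2+\langle\rho,\sin\theta_{h_0}\rangle^2$ in both sign cases, and the modulation of Lemma~\ref{lem:modulation} annihilates precisely these two quantities through $\nu\,\partial_x\theta_{h_0}=\eta\cdot\partial_x w_{h_0}$ and $\rho\sin\theta_{h_0}=\eta\cdot(e_1\wedge w_{h_0})$. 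So you should delete the reconciliation step and state the kernel-matched bounds; with that correction your proof is complete and identical to the paper's.
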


\begin{proof}
    Once again, $L_1$ and $L_2$ are two self-adjoint Schrödinger operators, whose potentials have finite limits at infinity. Indeed, since $\lim_{\pm \infty} \theta_{h_0} = 0$, we have
    \begin{equation*}
        \lim_{\pm \infty} 1 - 2 \sin^2 \theta_{h_0} + h_0 \cos \theta_{h_0} = \lim_{\pm \infty} 1 - 2 \sin^2 \theta_{h_0} + 3 h_0 \cos \theta_{h_0} - 2 h_0 = 1 + h_0.
    \end{equation*}
    We also have \eqref{eq:kernel_L1_2} and \eqref{eq:kernel_L2_2} by Proposition \ref{prop:kernel_schro_op}.
    Since $\sin \theta_{h_0} \in H^2$ and $\sin \theta_{h_0} > 0$ (see \eqref{eq:ODE_theta_2_v1}), we can apply Lemma \ref{lem:abstract_schro_op_1} to $L_2$.
    %
    On the other hand, $\partial_x \theta_{h_0}$ vanishes at $x=0$. Nonetheless, we also know that $\partial_x \theta_{h_0} > 0$ on $(- \infty, 0)$ and $\partial_x \theta_{h_0} < 0$ on $(0, \infty)$.
    Thus, we can apply Lemma \ref{lem:abstract_schro_op_2} to $L_1$.
\end{proof}

\begin{rem} \label{rem:eigenvalue_L1_h0_neg}
    Let us point out that, due to \eqref{eq:kernel_L2} and since $2 h_0 (1 - \cos \theta_{h_0}) < 0$ on $\mathbb R$, we obtain that
    \begin{equation*}
        \langle L_1 \sin \theta_{h_0}, \sin \theta_{h_0} \rangle = 2 h_0 \int (1 - \cos \theta_{h_0}) \abs{\sin \theta_{h_0}}^2 \diff x < 0.
    \end{equation*}
    Moreover, since $L_2$ has no negative eigenvalue, it is positive, so that for any $f \in H^2$,
    \begin{equation*}
        \langle L_1 f, f \rangle = 2 h_0 \int (1 - \cos \theta_{h_0}) \abs{f}^2 \diff x + \langle L_2 f, f \rangle \geq 2 h_0 \int (1 - \cos \theta_{h_0}) \abs{f}^2 \diff x \geq - 4 h_0 \norm{f}_{L^2}^2.
    \end{equation*}
    With its link with the first Rayleigh quotient (see Appendix \ref{sec:app_rayleigh}), we can probably estimate that the first eigenvalue is of order of $h_0$, at least when $h_0$ is small.
\end{rem}

\subsection{The associated basis} \label{sec:associated_basis}

\subsubsection{Definition and equivalence}

Let us introduce the following vector, depending on $x$:
\begin{equation*}
    n_{h_0} = 
    \begin{pmatrix}
        - \sin \theta_{h_0} \\
        \cos \theta_{h_0} \\
        0
    \end{pmatrix}.
\end{equation*}

Similarly to \cite{Cote_Ignat__stab_DW_LLG_DM}, such a vector allows us to introduce the frame $(w_{h_0} (x), n_{h_0} (x), e_3)$. This frame is a direct {orthonormal} basis for any $x \in \mathbb R$, and it is better adapted to a $\mathbb{S}^2$-valued magnetisation $m$ close to a stationary solution $w_{h_0}$.
Indeed, if $m = w_{h_0} + \eta \in \mathbb S^2$ with $\eta$ small (in some sense), we can decompose $\eta$ in this frame: $\eta = \mu w_{h_0} + \nu n_{h_0} + \rho e_3$.
Then $\mu$ is quadratic in $\eta$, whose norm is thus equivalent to that of $\nu$ and $\rho$.
The precise statement is as follows.

\begin{lem} \label{lem:expand_eta}
    {Let $h_0 \in (-1, 0)$ or $h_0 > 0$.}
    There exists $\delta_3 > 0$ and $C_2 > 0$ such that the following holds.
    Let $m = w_{h_0} + \eta: \mathbb{R} \rightarrow \mathbb{S}^2$ be such that
    \begin{equation*}
        \norm{\eta}_{H^1} < \delta_3.
    \end{equation*}
    We decompose $\eta$ in the $(w_{h_0}, n_{h_0}, e_3)$ basis pointwise in $x$:
    \begin{equation*}
        \eta = \mu w_{h_0} + \nu n_{h_0} + \rho e_3 \quad
        \text{where} \quad
        \mu \coloneqq \eta \cdot w_{h_0}, \quad
        \nu = \eta \cdot n_{h_0}, \quad
        \rho = \eta \cdot e_3.
    \end{equation*}
    Then $\mu, \nu, \rho \in H^1$, with
    \begin{equation} \label{eq:ineg1}
        \norm{\mu}_{H^1} \leq C_2 \norm{\eta}_{H^1}^2, \qquad
        \frac{1}{C_2} \norm{\eta}_{H^1} \leq \norm{(\nu, \rho)}_{H^1} \leq C_2 \norm{\eta}_{H^1}.
    \end{equation}
    In particular, $\mu = - \frac{1}{2} \abs{\eta}^2 = - \frac{1}{2} (\nu^2 + \rho^2) + O_0^4 (\eta)$, and $\mu \in L^1$.
    We also have
    \begin{equation} \label{eq:eta_1_L1}
        \eta \cdot e_1 = \cos \theta_{h_0} \, \mu - \sin \theta_{h_0} \, \nu \in L^1.
    \end{equation}
    If furthermore $\eta \in H^2$, then $\mu, \nu, \rho \in H^2$ and
    \begin{equation*} \label{eq:equiv_eta_nu2}
        \norm{(\nu, \rho)}_{H^2} \leq C_2 \norm{\eta}_{H^2}
    \end{equation*}
    Last, there also hold
    \begin{equation*}
        \rho \sin \theta_{h_0} = \eta \cdot (e_1 \wedge w_{h_0}), \qquad
        \nu \, \partial_x \theta_{h_0} = \eta \cdot \partial_x w_{h_0}.
    \end{equation*}
\end{lem}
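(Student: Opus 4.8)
The plan is to exploit two structural facts: that $(w_{h_0}, n_{h_0}, e_3)$ is a pointwise orthonormal frame whose entries lie in $\mathcal{C}_b^\infty$, and that the constraint $m \in \mathbb{S}^2$ forces the component $\mu$ to be quadratic in $\eta$. First I would record the differentiation rules $\partial_x w_{h_0} = \partial_x \theta_{h_0}\, n_{h_0}$ and $\partial_x n_{h_0} = - \partial_x \theta_{h_0}\, w_{h_0}$, which follow from $w_{h_0} = (\cos \theta_{h_0}, \sin \theta_{h_0}, 0)$ and the definition of $n_{h_0}$. Since $\theta_{h_0}$ is smooth with all derivatives bounded (Lemma \ref{lem:w_h_0_exp}), the vectors $w_{h_0}, n_{h_0}$ and their derivatives all lie in $\mathcal{C}_b^\infty$. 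As $\mu = \eta \cdot w_{h_0}$, $\nu = \eta \cdot n_{h_0}$, $\rho = \eta \cdot e_3$ are obtained by pairing $\eta$ against such functions, and multiplication by a $\mathcal{C}_b^\infty$ function preserves $H^1$ (and $H^2$), one immediately gets $\mu, \nu, \rho \in H^1$ together with the upper bounds $\norm{(\nu, \rho)}_{H^1} \leq C \norm{\eta}_{H^1}$ (and the corresponding $H^2$ estimate under the extra assumption $\eta \in H^2$).

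Next I would bring in the sphere constraint: from $\abs{m}^2 = 1$ and $\abs{w_{h_0}} = 1$ one obtains $2\mu + \abs{\eta}^2 = 0$, that is $\mu = -\tfrac{1}{2} \abs{\eta}^2 = -\tfrac{1}{2}(\mu^2 + \nu^2 + \rho^2)$ by orthonormality. This exhibits $\mu$ as a genuinely quadratic object: $\mu = O_0^2(\eta)$ and $\partial_x \mu = -\eta \cdot \partial_x \eta = O_1^2(\eta)$, so Lemma \ref{lem:est_negl_terms} gives $\norm{\mu}_{H^1} \leq C_2 \norm{\eta}_{H^1}^2$; in the $H^2$ case the same computation with $\partial_{xx} \mu = -\abs{\partial_x \eta}^2 - \eta \cdot \partial_{xx} \eta$ and the one-dimensional embedding $H^1 \hookrightarrow L^\infty$ yields $\norm{\mu}_{H^2} \lesssim \norm{\eta}_{H^1} \norm{\eta}_{H^2}$. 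Substituting $\mu^2 = O_0^4(\eta)$ back into the constraint produces the announced expansion $\mu = -\tfrac{1}{2}(\nu^2 + \rho^2) + O_0^4(\eta)$, and $\mu = -\tfrac{1}{2}\abs{\eta}^2 \in L^1$ since $\eta \in L^2$.

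The heart of the argument is the lower bound in the norm equivalence, and this is the one place where the smallness of $\delta_3$ is essential. Writing $\eta = \mu w_{h_0} + \nu n_{h_0} + \rho e_3$ and using the triangle inequality, I would estimate $\norm{\eta}_{H^1} \leq C \norm{\mu}_{H^1} + C \norm{(\nu, \rho)}_{H^1} \leq C C_2 \norm{\eta}_{H^1}^2 + C \norm{(\nu, \rho)}_{H^1} \leq C C_2 \delta_3 \norm{\eta}_{H^1} + C \norm{(\nu, \rho)}_{H^1}$, using $\norm{\eta}_{H^1} < \delta_3$ in the last step. Choosing $\delta_3$ small enough that $C C_2 \delta_3 \leq \tfrac{1}{2}$ allows the first term to be absorbed into the left-hand side, leaving $\norm{\eta}_{H^1} \leq 2 C \norm{(\nu, \rho)}_{H^1}$, which is the desired lower bound. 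I expect this absorption to be the main (if mild) obstacle, since it is the only step requiring a quantitative smallness threshold and it relies crucially on $\mu$ being higher order rather than merely bounded; everything else is either linear frame algebra or the $O_k^\ell$ calculus.

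Finally I would settle the explicit identities by expressing the fixed directions in the moving frame. Because $e_1 = \cos \theta_{h_0}\, w_{h_0} - \sin \theta_{h_0}\, n_{h_0}$, pairing with $\eta$ gives $\eta \cdot e_1 = \cos \theta_{h_0}\, \mu - \sin \theta_{h_0}\, \nu$; this lies in $L^1$ since $\mu \in L^1$ and, by Cauchy--Schwarz, $\sin \theta_{h_0}\, \nu \in L^1$ (recall $\sin \theta_{h_0} \in L^2$ from Theorem \ref{th:stat_sol} and $\nu \in L^2$). The remaining two relations are direct computations in the frame: $e_1 \wedge w_{h_0} = \sin \theta_{h_0}\, e_3$ yields $\eta \cdot (e_1 \wedge w_{h_0}) = \rho \sin \theta_{h_0}$, while $\partial_x w_{h_0} = \partial_x \theta_{h_0}\, n_{h_0}$ yields $\eta \cdot \partial_x w_{h_0} = \nu\, \partial_x \theta_{h_0}$, completing the proof.
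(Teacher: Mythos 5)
Your proposal is correct and follows essentially the same route as the paper's proof: upper bounds from the $\mathscr{C}_b^\infty$ regularity of the frame together with the $O_k^\ell$ calculus of Lemma \ref{lem:est_negl_terms}, the sphere constraint $\abs{w_{h_0}+\eta}^2=1$ giving $\mu = -\tfrac{1}{2}\abs{\eta}^2$ and hence the quadratic bound and $\mu \in L^1$, an absorption argument for the lower bound in \eqref{eq:ineg1} (which the paper dismisses as ``straightforward'' for small $\norm{\eta}_{H^1}$), and the frame identities $e_1 = \cos\theta_{h_0}\, w_{h_0} - \sin\theta_{h_0}\, n_{h_0}$, $e_1 \wedge w_{h_0} = \sin\theta_{h_0}\, e_3$, $\partial_x w_{h_0} = \partial_x\theta_{h_0}\, n_{h_0}$ for the remaining statements. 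You merely spell out the absorption step in more detail than the paper does; there is no gap.
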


\begin{proof}
    The proof is similar to the first step of the proof of \cite[Proposition~4.16]{Cote_Ignat__stab_DW_LLG_DM}.
    First, the relations between $\mu$, $\nu$, $\rho$ and $\eta$
    along with Lemma \ref{lem:est_negl_terms} and the regularity and integrability result on $\theta_{h_0}$ in Theorem \ref{th:stat_sol} give
    \begin{equation*}
        \norm{\mu}_{H^k} + \norm{\nu}_{H^k} + \norm{\rho}_{H^k} \lesssim \norm{\eta}_{H^k}.
    \end{equation*}
    On the other side, $\eta = \mu w_{h_0} + \nu n_{h_0} + \rho e_3$ and therefore
    \begin{equation*}
        \norm{\eta}_{H^k} \lesssim \norm{\mu}_{H^k} + \norm{\nu}_{H^k} + \norm{\rho}_{H^k}.
    \end{equation*}
    The equality $\mu = {-} \frac{1}{2} \abs{\eta}^2$ comes from the expansion of $\abs{w_{{h_0}} + \eta}^2 = 1$, which gives the first inequality of \eqref{eq:ineg1} with Lemma \ref{lem:est_negl_terms}. It also gives the fact that $\mu \in L^1$, and then \eqref{eq:eta_1_L1} is deduced by the relation
    \begin{equation*}
        e_1 = \cos \theta_{h_0} w_{h_0} - \sin \theta_{h_0} n_{h_0}.
    \end{equation*}
    We conclude that $\eta \cdot e_1 \in L^1$ thanks to $\mu \in L^1$, $\sin \theta_{h_0} \in L^2$ and $\nu \in L^2$.
    As soon as $\norm{\eta}_{H^1}$ is small enough the second inequality in \eqref{eq:ineg1} is then straightforward.
    Eventually, the last equality comes from the formulas
    \begin{equation*}
        \partial_x w_{h_0} = \partial_x \theta_{h_0} \, n_{h_0}, \qquad
        e_1 \wedge w_{h_0} = \sin \theta_{h_0} \, e_3. \qedhere
    \end{equation*}
\end{proof}

\subsubsection{Expansions of some expressions} \label{sec:expand_expressions}

In a similar vein to the approach taken in \cite{Cote_Ignat__stab_DW_LLG_DM}, we now turn our attention to a lemma providing expansions in $\eta$ (or equivalently, in $\nu$ and $\rho$) for several crucial expressions. 

\begin{lem}
    Let $m \in \mathcal{H}^2$ satisfying the assumptions of Lemma \ref{lem:expand_eta}, and let $\eta, \mu, {\nu}, \rho$ as defined in the same Lemma. Then there holds
    \begin{gather}
        \begin{aligned}
            \delta E (\eta) = O_2^2 (\eta) &+ ( \partial_{xx} \theta_{h_0} \nu + 2 \partial_x \theta_{h_0} \partial_x \nu + \cos \theta_{h_0} \sin \theta_{h_0} \nu ) \, w_{h_0} \\
                &+ ( - \partial_{xx} \nu + (\partial_x \theta_{h_0})^2 \nu + \cos^2 \theta_{h_0} \nu ) \, n_{h_0} \\
                &+ (- \partial_{xx} \rho + \rho) \, e_3,
        \end{aligned} \label{eq:delta_E_eta} \\
        \begin{aligned}
            \delta E (\eta) \cdot \eta = O_2^3 (\eta) &+ (- \partial_{xx} \nu + (1 + 2 h_0 (1 - \cos \theta_{h_0})) \nu) \, \nu \\
                &+ (- \partial_{xx} \rho + \rho) \, \rho.
        \end{aligned} \label{eq:delta_E_eta_eta}
    \end{gather}
\end{lem}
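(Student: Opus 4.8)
The plan is to exploit the linearity of $\delta E$ together with the moving orthonormal frame $(w_{h_0}, n_{h_0}, e_3)$. Since the expression \eqref{eq:def_delta_E} is linear in its argument (having used the constraint to recast the energy), one has $\delta E(\eta) = -\partial_{xx}\eta + \eta_2 e_2 + \eta_3 e_3$, so the whole computation reduces to expressing $-\partial_{xx}\eta$ and $\eta_2 e_2 + \eta_3 e_3$ in the frame $(w_{h_0}, n_{h_0}, e_3)$. The only geometric ingredients needed are the structure equations $\partial_x w_{h_0} = \partial_x\theta_{h_0}\, n_{h_0}$ and $\partial_x n_{h_0} = -\partial_x\theta_{h_0}\, w_{h_0}$ (the first is recalled in Lemma \ref{lem:expand_eta}, the second follows by differentiating $n_{h_0} = (-\sin\theta_{h_0}, \cos\theta_{h_0}, 0)$), together with the elementary relation $e_2 = \sin\theta_{h_0}\, w_{h_0} + \cos\theta_{h_0}\, n_{h_0}$.

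For \eqref{eq:delta_E_eta} I would first differentiate $\eta = \mu w_{h_0} + \nu n_{h_0} + \rho e_3$ twice, repeatedly substituting the structure equations, to obtain $\partial_{xx}\eta$ as an explicit combination of $w_{h_0}, n_{h_0}, e_3$ whose coefficients involve $\mu,\nu,\rho$, their derivatives, and $\partial_x\theta_{h_0}, \partial_{xx}\theta_{h_0}$. Then, using $\eta_2 = \mu\sin\theta_{h_0} + \nu\cos\theta_{h_0}$ and $\eta_3 = \rho$ together with the expression for $e_2$ above, I would add $\eta_2 e_2 + \eta_3 e_3$ and collect the result direction by direction. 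The terms linear in $(\nu,\rho)$ reproduce exactly the three bracketed coefficients in \eqref{eq:delta_E_eta}. Every remaining term carries a factor $\mu$, $\partial_x\mu$ or $\partial_{xx}\mu$; since $\mu = -\tfrac{1}{2}|\eta|^2 = O_0^2(\eta)$ by Lemma \ref{lem:expand_eta}, and since $\partial_x^k\theta_{h_0}\in\mathcal{C}_b^\infty$ by Lemma \ref{lem:w_h_0_exp}, the calculus rules of Lemma \ref{lem:calc_negl_terms} show each such term is a scalar $O_2^2(\eta)$ (at most two derivatives land on $\mu$); multiplying by the frame vectors, whose components lie in $\mathcal{C}_b^\infty$, keeps them vector-valued $O_2^2(\eta)$.

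For \eqref{eq:delta_E_eta_eta} I would pair the frame expansion \eqref{eq:delta_E_eta} with $\eta = \mu w_{h_0} + \nu n_{h_0} + \rho e_3$ and use orthonormality, so that $\delta E(\eta)\cdot\eta$ is the $w_{h_0}$-coefficient times $\mu$, plus the $n_{h_0}$-coefficient times $\nu$, plus the $e_3$-coefficient times $\rho$. The $n_{h_0}$ and $e_3$ contributions furnish the two quadratic main terms, while every product involving $\mu$ and every product of an $O_2^2(\eta)$ coefficient with $\nu$ or $\rho$ is at least cubic with at most two derivatives, hence absorbed into $O_2^3(\eta)$. The one genuine simplification is that the $\nu$-coefficient $(\partial_x\theta_{h_0})^2 + \cos^2\theta_{h_0}$ must be rewritten: invoking the first integral \eqref{eq:theta_en}, namely $(\partial_x\theta_{h_0})^2 = \sin^2\theta_{h_0} + 2h_0(1-\cos\theta_{h_0})$, gives $(\partial_x\theta_{h_0})^2 + \cos^2\theta_{h_0} = 1 + 2h_0(1-\cos\theta_{h_0})$, which is precisely the coefficient in \eqref{eq:delta_E_eta_eta}. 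There is no analytic difficulty here; the main obstacle is purely organizational — executing the second derivative in the moving frame without sign errors and, above all, correctly classifying each residual term by its polynomial degree and derivative count so that it lands in $O_2^2(\eta)$ (respectively $O_2^3(\eta)$).
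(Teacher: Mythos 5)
Your proposal is correct and follows essentially the same route as the paper's proof: differentiating $\eta = \mu w_{h_0} + \nu n_{h_0} + \rho e_3$ twice via the structure equations, expressing $e_2$ in the moving frame, absorbing all $\mu$-terms into $O_2^2(\eta)$ using $\mu = -\tfrac{1}{2}\abs{\eta}^2$, and then pairing with $\eta$ and simplifying the $\nu$-coefficient through the first integral $(\partial_x\theta_{h_0})^2 = \sin^2\theta_{h_0} + 2h_0(1-\cos\theta_{h_0})$ (the paper cites \eqref{eq:ODE_theta_2_v1}, which is the same identity). No gap to report.
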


\begin{proof}
    First, from the definition of $w_{h_0}$ and $n_{h_0}$, there holds $\partial_x w_{h_0} = \partial_x{\theta_{h_0}} n_{h_0}$ and $\partial_x n_{h_0} = - \partial_x \theta_{h_0} w_{h_0}$. Therefore, we can compute the derivatives of $\eta = \mu \, w_{h_0} + \nu \, n_{h_0} + \rho \, e_3$:
    \begin{gather}
        \partial_x \eta = (\partial_x \mu - \partial_x \theta_{h_0} \nu) \, w_{h_0} + (\partial_x \nu + \partial_x \theta_{h_0} \mu) \, n_{h_0} + \partial_x \rho \, e_3, \notag \\
        \begin{multlined}
            \partial_{xx} \eta = (\partial_{xx} \, \mu - \partial_{xx} \theta_{h_0} \, \nu - 2 \partial_x \theta_{h_0} \partial_x \nu - (\partial_x \theta_{h_0})^2 \mu) w_{h_0} \\
            \begin{aligned}
                &+ (\partial_{xx} \nu + \partial_{xx} \theta_{h_0} \, \mu + \partial_x \theta_{h_0} \partial_x \mu - (\partial_x \theta_{h_0})^2 \nu) \, n_{h_0} \\
                &+ \partial_{xx} \rho \, e_3.
            \end{aligned}
        \end{multlined} \label{eq:first_comp_dxx_eta}
    \end{gather}
    Moreover, we see that $e_2 = \sin \theta_{h_0} \, w_{h_0} + \cos \theta_{h_0} \, n_{h_0}$. Thus, $\eta_2 = \eta \cdot e_2 = \sin \theta_{h_0} \, \mu + \cos \theta_{h_0} \, \nu$ and therefore
    \begin{equation*}
        \eta_2 e_2 = (\sin \theta_{h_0} \, \mu + \cos \theta_{h_0} \, \nu) \sin \theta_{h_0} \, w_{h_0} + ( \sin \theta_{h_0} \, \mu + \cos \theta_{h_0} \, \nu) \cos \theta_{h_0} \, n_{h_0}.
    \end{equation*}
    Last, $\eta_3 e_3 = \rho e_3$.
    Thus, using the fact that $\mu = - \frac{1}{2} \abs{\eta}^2 = O_0^2 (\eta)$ (see Lemma \ref{lem:expand_eta}) and Lemma \ref{lem:est_negl_terms}, all the terms involving $\mu$ in \eqref{eq:first_comp_dxx_eta} are $O_2^2 (\eta)$. Then, we obtain \eqref{eq:delta_E_eta} by the expression of $\delta E$ in \eqref{eq:def_delta_E}. As for \eqref{eq:delta_E_eta_eta}, it is obtained by expanding $\delta E (\eta)$ with \eqref{eq:delta_E_eta}, by neglecting once again all the terms involving $\mu$ and by using \eqref{eq:ODE_theta_2_v1}.
\end{proof}

\subsubsection{Modulation and decomposition of the magnetisation}

In our framework, decomposing directly $m$ with the aforementioned associated basis is not the best option. Indeed, in order to prove Theorem \ref{th:instability}, we need to prove not only that $w_{h_0}$ is (nonlinearly) unstable, but also its orbit. Therefore, formally, this decomposition should be used on $(-g_m).w_{h_0}$ where $g_m$ minimizes the following minimization problem:
\begin{equation*}
    \min_{g \in G} \norm{m - g.w_{h_0}}_{H^1}.
\end{equation*}
However, such a $g_m$ is hard to track down, and might not be smooth enough in time. Therefore, we will more generally use this decomposition on $(-g).m$ for some $g$ well chosen. To be more specific, we would like $g$ to be near $g_m$, to be smooth enough and to make vanish the bad negative terms in the estimations of Lemma \ref{lem:schro_op_1} or Lemma \ref{lem:schro_op_2}.
This is the goal of the following result of modulation and decomposition of a magnetization $m$ near $w_{h_0}$.

\begin{lem} \label{lem:modulation}
    There exists $\delta_1 > 0$ and $C_1 \geq 1$ such that the following holds. For all $w \in \mathcal{M}$ where $\mathcal{M} \coloneqq (e_1 + H^1) \cap \mathscr{C} (\mathbb R, \mathbb S^2)$ such that
    \begin{equation} \label{eq:m_close_m_h_0}
        \norm{w - w_{h_0}}_{H^1} \leq \delta_1,
    \end{equation}
    there exists a gauge $g = (y, \phi) \in G$ such that
    the following orthogonality constraints hold:
    \begin{equation*}
        \int \eta \cdot \partial_x w_{h_0} \diff x = \int \eta \cdot (e_1 \wedge w_{h_0}) \diff x = 0,
    \end{equation*}
    where $\eta \in H^1$ is defined by
    \begin{equation*}
        \eta \coloneqq (- g).w - w_{h_0},
    \end{equation*}
    so that $w = g.(w_{h_0} + \eta)$.
    Furthermore,
    \begin{equation*}
        \abs{g} + \norm{\eta}_{H^1} \leq C_1 \norm{w - w_{h_0}}_{H^1}.
    \end{equation*}
    The gauge $g$ is unique with the above properties and the map $w \in \mathcal{M} \mapsto g \in G$ is of class $\mathscr{C}^\infty$ in a neighborhood of $w_{h_0}$.
\end{lem}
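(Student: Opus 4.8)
The plan is to realise the two orthogonality conditions as the zero set of a finite-dimensional constraint map and to solve it for $g$ by the implicit function theorem; the two equations are in exact correspondence with the two one-parameter invariances (translation and rotation about $e_1$) generating $G$, which is why we expect them to pin down a unique gauge. Concretely, for $w$ in a neighbourhood of $w_{h_0}$ in $e_1 + H^1$ (note $w \in \mathcal{M}$ gives $w - w_{h_0} \in H^1$) and $g = (y,\phi) \in G$, I set $\eta_{w,g} \coloneqq (-g).w - w_{h_0}$ and define
\[
\Phi(w, g) \coloneqq \Bigl( \int \eta_{w,g} \cdot \partial_x w_{h_0} \diff x, \ \int \eta_{w,g} \cdot (e_1 \wedge w_{h_0}) \diff x \Bigr) \in \mathbb{R}^2 .
\]
Then $\Phi(w_{h_0}, \mathrm{id}) = 0$, and producing $g$ with the announced constraints amounts to solving $\Phi(w, g) = 0$ for $g$ near $\mathrm{id}$, after which $\eta = \eta_{w,g}$.

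The first point to settle, and the main subtlety, is the regularity of $\Phi$. The action $g \mapsto (-g).w = R_{-\phi} w(\cdot + y)$ is \emph{not} smooth from $G$ into $H^1$, since differentiating the translation costs one derivative. However, in $\Phi$ the function $(-g).w$ is only ever paired in $L^2$ against the fixed test functions $\partial_x w_{h_0}$ and $e_1 \wedge w_{h_0} = \sin\theta_{h_0}\, e_3$, which by Lemma \ref{lem:w_h_0_exp} are smooth and, together with all their derivatives, exponentially decaying. After the change of variables $x \mapsto x + y$ one can therefore transfer every $y$-derivative onto the test function, e.g. $\partial_y \int w(x+y)\cdot \psi(x)\,\diff x = -\int w(x+y)\cdot \partial_x\psi(x)\,\diff x$, losing no regularity on $w$; the dependence on $\phi$ through $R_{-\phi}$ is trigonometric, and $\Phi$ is affine (hence smooth) in $w$. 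I conclude that $\Phi$ is $C^\infty$ jointly on a neighbourhood of $(w_{h_0}, \mathrm{id})$ in $(e_1 + H^1) \times G$.

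It then remains to check that $D_g\Phi(w_{h_0}, \mathrm{id})$ is invertible. Differentiating $\eta_{w_{h_0},g}$ at $g = \mathrm{id}$ gives $\partial_y \eta = \partial_x w_{h_0}$ and $\partial_\phi \eta = -\,e_1 \wedge w_{h_0}$. Using $\partial_x w_{h_0} = \partial_x\theta_{h_0}\, n_{h_0}$ and $e_1 \wedge w_{h_0} = \sin\theta_{h_0}\, e_3$ together with $n_{h_0}\cdot e_3 = 0$, the off-diagonal pairings vanish and the differential is represented by the diagonal matrix
\[
\begin{pmatrix} \norm{\partial_x\theta_{h_0}}_{L^2}^2 & 0 \\ 0 & -\norm{\sin\theta_{h_0}}_{L^2}^2 \end{pmatrix},
\]
which is invertible since $w_{h_0}$ is non-constant. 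The implicit function theorem then yields $\delta_1 > 0$ and a unique $C^\infty$ map $w \mapsto g(w)$ on $\{\norm{w - w_{h_0}}_{H^1} \le \delta_1\}$ with $g(w_{h_0}) = \mathrm{id}$ and $\Phi(w, g(w)) = 0$, giving at once existence, local uniqueness and the claimed smoothness.

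Finally, for the quantitative bound, the Lipschitz continuity of $w \mapsto g(w)$ at $w_{h_0}$ (built into the implicit function theorem, since $g(w_{h_0}) = \mathrm{id}$) gives $\abs{g(w)} \le C\norm{w - w_{h_0}}_{H^1}$. Writing $\eta = (-g).w - w_{h_0}$ and inserting $(-g).w_{h_0}$, I split
\[
\norm{\eta}_{H^1} \le \norm{(-g).(w - w_{h_0})}_{H^1} + \norm{(-g).w_{h_0} - w_{h_0}}_{H^1}.
\]
The first term equals $\norm{w - w_{h_0}}_{H^1}$ because $R_{-\phi}$ and $\tau_{-y}$ are $H^1$-isometries, while the second is $\lesssim \abs{g}$: since $w_{h_0} - e_1 \in H^k$ for all $k$ by Lemma \ref{lem:w_h_0_exp} and $R_{-\phi}e_1 = e_1$, one has $\norm{\tau_{-y}w_{h_0} - w_{h_0}}_{H^1} \lesssim \abs{y}\,\norm{\partial_x w_{h_0}}_{H^1}$ and $\norm{R_{-\phi}w_{h_0} - w_{h_0}}_{H^1} = \norm{(R_{-\phi} - I)(w_{h_0} - e_1)}_{H^1} \lesssim \abs{\phi}\,\norm{w_{h_0} - e_1}_{H^1}$. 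Combining these with the bound on $\abs{g}$ yields $\abs{g} + \norm{\eta}_{H^1} \le C_1 \norm{w - w_{h_0}}_{H^1}$ (enlarging $C_1$ to be $\ge 1$), which completes the proof. The only genuinely delicate step is the regularity of $\Phi$ in the second paragraph; everything else is a routine implicit-function-theorem computation.
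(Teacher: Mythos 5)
Your proof is correct and follows essentially the same route as the paper: the same constraint map paired against $\partial_x w_{h_0}$ and $e_1 \wedge w_{h_0}$, smoothness obtained by moving the group action onto these smooth, exponentially decaying test functions, the same diagonal invertible Jacobian at $(w_{h_0}, \mathrm{id})$, the implicit function theorem, and the same splitting (isometry of the action plus $\norm{g.w_{h_0} - w_{h_0}}_{H^1} \lesssim \abs{g}$) for the quantitative bound. The only cosmetic difference is that the paper runs the implicit function theorem with the first variable in $L^\infty$ and then invokes the embedding $H^1 \hookrightarrow L^\infty$, whereas you work directly in the $H^1$ topology; both are valid.
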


Of course, such a lemma can be applied to $(-g_m).m$, which leads to the requested properties. This lemma is proved in Appendix \ref{sec:proof_mod_lem}.

\section{The total energy: {dissipation} and expansion} \label{sec:mod_en_disp_exp}

We first recall the definition of the total energy $E_{h_0}$ given in \eqref{eq:mod_en}
\begin{equation*}
    E_{h_0} (m) \coloneqq E (m) - h_0 \int (m_1 - 1) \diff x.
\end{equation*}
We also recall that its variation is related to the variation $\delta E$ of the initial energy $E$ and to the effective magnetic field $H$:
\begin{equation*}
    \delta E_{h_0} (m) = \delta E (m) - h_0 e_1 = - H(m).
\end{equation*}
In particular, $\delta E_{h_0} (m)$ and $H(m)$ are in $L^2 + L^\infty$ if $m \in \mathcal{H}^2$, and in $H^{-1} + L^\infty$ if $m$ is only $\mathcal{H}^1$.
Another property of $E_{h_0}$ can be deduced from \eqref{eq:delta_E_h0}: this total energy is decreasing through the flow of \eqref{eq:llg}.

\begin{prop} \label{prop:evolution_E_h_0_1}
    With the assumption of Theorem \ref{th:lwp}, assuming furthermore that $h (t) = h_0$ is constant, $m_0 \in \mathcal{H}^s$ for some $s \geq 2$ and $\lim_{\pm \infty} m_0 = e_1$, then for all $t \in [0, T_+ (m))$,
    \begin{equation*}
        \frac{\diff}{\diff t} E_{h_0} (m (t)) = - \alpha \int \abs{m \wedge H (m)}^2 \diff x.
    \end{equation*}
\end{prop}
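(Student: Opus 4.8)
The plan is to reduce everything to a single pointwise algebraic identity and then to import the rigorous differentiability of $E$ from Theorem \ref{th:lwp}. The conceptual core is that, writing $H = H(m)$ and using $\delta E_{h_0}(m) = -H(m)$ from \eqref{eq:delta_E_h0}, the integrand of $\frac{\diff}{\diff t} E_{h_0}$ collapses. Since $\partial_t m = m \wedge H - \alpha\, m \wedge (m \wedge H)$, I would first record that $-H \cdot (m \wedge H) = 0$ pointwise, a cross product being orthogonal to each of its factors, and that by the double cross product formula $m \wedge (m \wedge H) = (m \cdot H)\, m - H$ (using $\abs{m} = 1$), so that $H \cdot \bigl( m \wedge (m \wedge H) \bigr) = (m \cdot H)^2 - \abs{H}^2 = -\abs{m \wedge H}^2$. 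Combining these gives the pointwise identity $-H(m) \cdot \partial_t m = -\alpha \abs{m \wedge H(m)}^2$, which is exactly the desired integrand and, crucially, belongs to $L^1$: indeed $m \wedge H(m) \in L^2$ since $\delta E(m) \in L^2$ for $m \in \mathcal H^2$ and $m \wedge e_1 = m_3 e_2 - m_2 e_3 \in L^2$.

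It then remains to justify the chain rule $\frac{\diff}{\diff t} E_{h_0}(m) = \langle \delta E_{h_0}(m), \partial_t m \rangle = -\langle H(m), \partial_t m \rangle$, after which the identity above finishes the proof. To keep the regularity work minimal, I would split $E_{h_0} = E + E_{h_0}^Z$ and invoke the energy dissipation identity of Theorem \ref{th:lwp}(3) directly for $E$ (valid as $s \geq 2$). For the Zeeman part $E_{h_0}^Z(m) = -h_0 \int (m_1 - 1)\diff x$ I would first check it is well defined for every $t$: the hypothesis $\lim_{\pm\infty} m_0 = e_1$ propagates along the flow, because the far-field value of $m_1$ takes values in $\{\pm 1\}$ and depends continuously, hence constantly, on $t$; consequently $m_2, m_3 \to 0$ and $m_1 - 1 = -(m_2^2 + m_3^2)/(1 + m_1) \in L^1$. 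Differentiating gives $\frac{\diff}{\diff t} E_{h_0}^Z(m) = -h_0 \int (\partial_t m)\cdot e_1 \diff x$, and substituting \eqref{eq:llg} and simplifying with the same vector identities recovers precisely the terms that, added to $\frac{\diff}{\diff t} E$, produce $-\alpha \int \abs{m \wedge H(m)}^2 \diff x$. (Equivalently, one verifies the global chain rule $\frac{\diff}{\diff t} E_{h_0}(m) = \langle -H(m), \partial_t m \rangle$ and inserts the pointwise identity in one stroke.)

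The main obstacle is purely one of rigor, at the level of integrability and differentiation under the integral sign. Although the final integrand is $L^1$, several individual terms arising after expanding $(\partial_t m)\cdot e_1$ — notably $\partial_{xx} m_1$, which for $m \in \mathcal H^2$ is only controlled in $L^2$ — are not separately integrable. I would circumvent this by working on $[-R,R]$: the precessional contribution $(m \wedge H)\cdot e_1$ is an exact $x$-derivative of the $L^1$ quantity $m_3 \partial_x m_2 - m_2 \partial_x m_3$, and $\int_{\mathbb R} \partial_{xx} m_1 = 0$, both boundary terms vanishing as $R \to \infty$ since $m_2, m_3$ and $\partial_x m$ decay at infinity (Sobolev embedding of $H^1$ into continuous functions vanishing at infinity). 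The legitimacy of the time differentiation rests on the regularity $m \in \mathscr C([0,T_+), \mathcal H^s)$ with $s \geq 2$, which yields $\partial_t m \in L^2$ pointwise in $t$ together with the needed local-in-time continuity, exactly as in the proof of Theorem \ref{th:lwp}(3) in \cite{Cote_Ignat__stab_DW_LLG_DM}.
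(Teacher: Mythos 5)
Your proposal is correct and takes essentially the same route as the paper: split $E_{h_0} = E + E_{h_0}^Z$, invoke the dissipation identity of Theorem \ref{th:lwp}(3) for $E$, differentiate the Zeeman term along the flow by substituting \eqref{eq:llg} (with the integrability of the precessional contribution handled through its exact-derivative structure, i.e.\ integration by parts), and recombine algebraically into $- \alpha \int \abs{m \wedge H(m)}^2 \diff x$. The differences are presentational only: your pointwise identity $-H(m)\cdot \partial_t m = -\alpha \abs{m \wedge H(m)}^2$ and your explicit argument that $\lim_{\pm\infty} m(t) = e_1$ propagates in time make explicit two steps the paper performs implicitly (the paper instead groups the terms so that each is manifestly a product of $L^2$ functions, avoiding the truncation on $[-R,R]$).
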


\begin{proof}
    From \eqref{eq:llg}, there holds
    \begin{align*}
        \partial_t m_1 &= (m \wedge H(m)) \cdot e_1 - \alpha \Bigl( m \wedge (m \wedge H(m)) \Bigr) \cdot e_1, \\
        \partial_t (m_1 - 1) &= - H(m) \cdot (m \wedge e_1) + \alpha (m \wedge e_1) \cdot (m \wedge H(m)) \\
            &= - \partial_{xx} m \cdot (m \wedge e_1) - \alpha (m \wedge e_1) \cdot (m \wedge \delta E (m)) + h_0 \alpha (m \wedge e_1) \cdot (m \wedge e_1).
    \end{align*}
    As we know that $m \in \mathcal{H}^2$ and thus $\delta E (m) \in L^2$ on $[0, T_+ (m))$, with uniform bounds on every compact subintervals, we can easily infer that all the terms in the right-hand side of the last equality are in $L^1$. Therefore, since $m_1 (t) - 1 \in L^1$ for all $t \geq 0$, we get $m_1 - 1 \in C^1 ([0, T_+ (m)), L^1)$ and
    \begin{align*}
        \frac{\diff}{\diff t} \int (m_1 - 1) \diff x &= - \int \partial_{xx} m \cdot (m \wedge e_1) \diff x - \alpha \int (m \wedge e_1) \cdot (m \wedge \delta E (m)) \diff x + h_0 \alpha \int \abs{m \wedge e_1}^2 \diff x \\
            &= \int \partial_{x} m \cdot ( \partial_x m \wedge e_1) \diff x - \alpha \int (m \wedge e_1) \cdot (m \wedge \delta E (m)) \diff x + h_0 \alpha \int \abs{m \wedge e_1}^2 \diff x \\
            &= - \alpha \int (m \wedge e_1) \cdot (m \wedge \delta E (m)) \diff x + h_0 \alpha \int \abs{m \wedge e_1}^2 \diff x.
    \end{align*}
    The conclusion is reached by using this equality into the relation between $E (m)$ and $E_{h_0} (m)$, which yields
    \begin{equation*}
        \frac{\diff}{\diff t} E_{h_0} (m (t)) = \frac{\diff}{\diff t} E (m (t)) - h_0 \frac{\diff}{\diff t} \int (m_1 - 1) \diff x,
    \end{equation*}
    and using the fact that $ |\delta E(m)|^2 - |m \cdot \delta E(m)|^2 = \abs{\delta E (m) \wedge m}^2$.
\end{proof}

This property shows that $E_{h_0}$ is therefore better suited to describe the behavior of any solution near its critical point $w_{h_0}$, and more generally near the orbit of $w_{h_0}$ since $E_{h_0}$ is invariant by translation and rotation around $e_1$.
The results in Theorem \ref{th:stat_sol} and in Section \ref{sec:associated_basis} allow to expand the total energy $E_{h_0}$ and its {dissipation} through the flow of the equation (see Proposition \ref{prop:evolution_E_h_0_1}) near $w_{h_0}$ in both cases $h_0 > 0$ and $h_0 \in (-1, 0)$.
First, we begin by the expansion of the total energy.

\begin{lem} \label{lem:expand_energy}
    Let $m \in \mathcal{H}^2$ satisfying the assumptions of Lemma \ref{lem:expand_eta}, and let $\eta, \mu, {\nu}, \rho$ as defined in the same Lemma. Then there exists $C > 0$ such that
    \begin{equation} \label{eq:est_en_mod}
        \abs{E_{h_0} (m) - E_{h_0} (w_{h_0}) - \frac{1}{2} \Bigl( \langle L_1 \nu, \nu \rangle + \langle L_2 \rho, \rho \rangle \Bigr)} \leq C \norm{\eta}_{H^1}^3.
    \end{equation}
\end{lem}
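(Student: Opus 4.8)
The plan is to compute the Taylor expansion of $E_{h_0}(w_{h_0} + \eta)$ to second order in $\eta$, using the fact that $w_{h_0}$ is a critical point of $E_{h_0}$ to kill the first-order term, and then to recognize the quadratic form as $\frac{1}{2}(\langle L_1 \nu, \nu\rangle + \langle L_2 \rho, \rho\rangle)$ via the expansion of $\delta E(\eta) \cdot \eta$ already established in \eqref{eq:delta_E_eta_eta}. Since $E_{h_0}$ is a quadratic-plus-lower functional in $m$ (the energy $E$ is quadratic in $\partial_x m$ and in $m_2, m_3$, while the Zeeman term is linear in $m_1$), the expansion of $E_{h_0}(w_{h_0}+\eta)$ in powers of $\eta$ terminates, and the only genuinely nonlinear contributions come from the constraint $|m|=1$, which forces $\mu = -\frac{1}{2}|\eta|^2 = O_0^2(\eta)$ as recorded in Lemma \ref{lem:expand_eta}.

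First I would write, using $\delta E_{h_0} = \delta E - h_0 e_1$ and Taylor expansion,
\begin{equation*}
    E_{h_0}(w_{h_0} + \eta) - E_{h_0}(w_{h_0}) = \langle \delta E_{h_0}(w_{h_0}), \eta \rangle + \frac{1}{2} \langle \delta E(\eta), \eta \rangle + R,
\end{equation*}
where the remainder $R$ collects the genuinely cubic-and-higher terms; here I use that the second variation of $E$ is exactly $\delta E$ applied to $\eta$ (as $E$ is a quadratic form in the relevant components) and that the Zeeman term, being linear in $m_1$, contributes only to the first-order term. The first-order term vanishes: since $w_{h_0}$ is a critical point, $\delta E_{h_0}(w_{h_0}) = -H(w_{h_0}) = -\Lambda w_{h_0}$ by \eqref{eq:expr_H_w_h_0_v1}, and $\langle \Lambda w_{h_0}, \eta \rangle = \int \Lambda \, \mu \, \diff x$, which is $O_0^2(\eta)$ times the bounded function $\Lambda$; by Lemma \ref{lem:est_negl_terms} this integral is bounded by $\norm{\eta}_{H^1}^2$. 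This term is not quite third order, so I would instead exploit that $\mu = -\frac12|\eta|^2$ combined with the orthogonality/integrability of $\Lambda$ to absorb it — more carefully, I would need to verify this contributes at the right order, which is one subtle point.

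Next I would insert the expansion \eqref{eq:delta_E_eta_eta}, which gives
\begin{equation*}
    \langle \delta E(\eta), \eta \rangle = \langle -\partial_{xx}\nu + (1 + 2h_0(1-\cos\theta_{h_0}))\nu, \nu \rangle + \langle -\partial_{xx}\rho + \rho, \rho \rangle + \int O_2^3(\eta)\, \diff x.
\end{equation*}
Recognizing that $-\partial_{xx} + 1 + 2h_0(1-\cos\theta_{h_0})$ applied against $\nu$ produces $\langle L_1 \nu, \nu\rangle$ — here I must check that the quadratic form of $L_1$ matches this potential, using $L_2 = L_1 - 2h_0(1-\cos\theta_{h_0})$ together with the identity $-2\sin^2\theta_{h_0} + h_0\cos\theta_{h_0}$ reshaped via \eqref{eq:theta_en} — and that the $e_3$-component yields $\langle L_2 \rho, \rho\rangle$, gives the claimed quadratic form. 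Finally, the remainder: every leftover term ($R$, the first-order contribution through $\mu$, and the $\int O_2^3(\eta)$ term) is a homogeneous integrand of degree $\geq 3$ with at most two derivatives, so the bound $\abs{\int O_2^3(\eta)\,\diff x} \lesssim \norm{\eta}_{H^1}^3$ from Lemma \ref{lem:est_negl_terms} (the third bullet, with $\ell = 3$, $k=2$) closes the estimate. The main obstacle I anticipate is the precise bookkeeping in matching the two quadratic forms to $L_1$ and $L_2$: one must carefully convert the potential $1 + 2h_0(1-\cos\theta_{h_0})$ appearing on $\nu$ into the operator $L_1$, which requires invoking the defining ODE \eqref{eq:theta_en} to rewrite $\sin^2\theta_{h_0}$ and $\cos\theta_{h_0}$ terms consistently, and similarly confirming the $\rho$-form is exactly $L_2$ rather than $-\partial_{xx}+1$; tracking these identities, rather than the remainder estimate, is where the real care lies.
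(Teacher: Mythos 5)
Your overall skeleton (exact second-order expansion of $E_{h_0}$, insertion of \eqref{eq:delta_E_eta_eta}, error control via Lemma \ref{lem:est_negl_terms}) is the right one, but there is a genuine gap at precisely the step you flag as ``subtle'', and it cannot be resolved the way you propose. The first-order term neither vanishes nor can be absorbed into the cubic error. Criticality of $w_{h_0}$ under the constraint $\abs{m}=1$ only gives $\delta E_{h_0}(w_{h_0}) = -H(w_{h_0}) = -\Lambda\, w_{h_0}$, a multiple of $w_{h_0}$, not zero; and $\eta$ is not tangent to the sphere at $w_{h_0}$: it has the normal component $\mu = \eta\cdot w_{h_0} = -\tfrac12\abs{\eta}^2 = -\tfrac12(\nu^2+\rho^2)+O_0^4(\eta)$. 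Hence the first variation contributes
\begin{equation*}
    \langle \delta E_{h_0}(w_{h_0}), \eta\rangle \;=\; -\int \Lambda\,\mu\,\diff x \;=\; \frac12\int \Lambda\,(\nu^2+\rho^2)\,\diff x \;+\; \int O_0^4(\eta)\,\diff x,
\end{equation*}
a genuinely \emph{quadratic} contribution in $(\nu,\rho)$, of the same order as the term you want to identify. It is not an error term: it is exactly the piece that converts the potentials of \eqref{eq:delta_E_eta_eta} into those of $L_1$ and $L_2$, since by \eqref{eq:def_Lambda_v1} one has $L_1 = -\partial_{xx} + 1 + 2h_0(1-\cos\theta_{h_0}) + \Lambda$ and $L_2 = -\partial_{xx} + 1 + \Lambda$.

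Consequently your matching step fails as stated: the operator $-\partial_{xx}+1+2h_0(1-\cos\theta_{h_0})$ is \emph{not} $L_1$, and $-\partial_{xx}+1$ is \emph{not} $L_2$; each differs from its target by multiplication by $\Lambda$, which is not identically zero ($\Lambda \to h_0 \neq 0$ at $\pm\infty$, and at $x=0$ with $h_0>0$ the two $\nu$-potentials are $1+4h_0$ versus $1-h_0$). The identity \eqref{eq:theta_en} relates $(\partial_x\theta_{h_0})^2$ to $\sin^2\theta_{h_0}$ and $\cos\theta_{h_0}$ and has already been spent in deriving \eqref{eq:delta_E_eta_eta}; no pointwise identity of this kind can supply the missing potential $\Lambda$ acting on arbitrary $\nu,\rho$. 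The correct bookkeeping — which is what the paper does, writing $E(m)=\frac12\int \delta E(m)\cdot m\,\diff x$ and expanding bilinearly, equivalent to your exact Taylor expansion since $E$ is quadratic and the Zeeman term linear — is to \emph{keep} the first-order term, expand it through $\mu$ as above, add it to $\frac12\langle\delta E(\eta),\eta\rangle$ from \eqref{eq:delta_E_eta_eta}, and observe that the summed potentials are exactly those of $L_1$ (on $\nu$) and $L_2$ (on $\rho$). The leftovers are then $\int O_2^3(\eta)\,\diff x$ and $\int O_0^4(\eta)\,\diff x$, both $\lesssim \norm{\eta}_{H^1}^3$ by Lemma \ref{lem:est_negl_terms} (using $\norm{\eta}_{H^1}\leq \delta_3$ for the quartic one). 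With this repair, your argument coincides with the paper's proof.
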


\begin{proof}
    By the definition of $E$ and $\delta E$, we know that $E (m) = \frac{1}{2} \int \delta E (m) \cdot m \diff x$.
    Since $m = w_{h_0} + \eta$, there holds
    \begin{align*}
        E_{h_0} (m) &= E_{h_0} (w_{h_0} + \eta) = E (w_{h_0} + \eta) - h_0 \int ( w_{h_0} \cdot e_1 - 1 + \eta \cdot e_1) \diff x \\
            &= \begin{multlined}[t][12cm]
                    \underbrace{\frac{1}{2} \int \Bigl( \delta (w_{h_0}) \cdot w_{h_0} - 2 h_0 (w_{h_0} \cdot e_1 - 1) \Bigr) \diff x}_{I_1} \\
                    \begin{aligned}
                        &+ \frac{1}{2} \Bigl( \underbrace{\int \delta E (w_{h_0}) \cdot \eta \diff x + \int \delta E (\eta) \cdot w_{h_0} \diff x - 2 h_0 \int \eta \cdot e_1 \diff x}_{I_2} \Bigr) \\
                        &+ \frac{1}{2} \underbrace{\int \delta E (\eta) \cdot \eta \diff x}_{I_3}.
                    \end{aligned}
                \end{multlined}
    \end{align*}
    First, there holds $I_1 = E_{h_0} (w_{h_0})$.
    As for $I_2$, we emphasize that $\eta \cdot e_1 \in L^1$ by Lemma \ref{lem:expand_eta}, thus the last integral is well-defined. By grouping all the integrals, using \eqref{eq:expr_H_w_h_0_v1} and that $\delta E$ is a linear operator which is self-adjoint with respect to the $L^2$ scalar product, we get
    \begin{align*}
        I_2 &= \int \Bigl( 2 \delta E (w_{h_0}) \cdot \eta - 2 h_0 \eta \cdot e_1 \Bigr) \diff x \\
            &= - 2 \int H(w_{h_0}) \cdot \eta \diff x \\
            &= - 2 \int \Lambda (x) w_{h_0} \cdot \eta \diff x \\
            &= 2 \int (2 \sin^2 \theta_{h_0} - 3 h_0 \cos \theta_{h_0} + 2 h_0) \mu \diff x \\
            &= - \int (2 \sin^2 \theta_{h_0} - 3 h_0 \cos \theta_{h_0} + 2 h_0) (\nu^2 + \rho^2) \diff x + \int O_0^4 (\eta) \diff x.
    \end{align*}
    Last, using \eqref{eq:delta_E_eta_eta}, $I_3$ can be expanded as follows:
    \begin{equation*}
        I_3 = \int (- \partial_{xx} \nu + (1 + 2 h_0 (1 - \cos \theta_{h_0})) \nu) \, \nu \diff x + \int (- \partial_{xx} \rho + \rho) \, \rho \diff x + \int O_2^3 (\eta) \diff x.
    \end{equation*}
    The last term for both $I_2$ and $I_3$ can be estimated thanks to Lemma \ref{lem:est_negl_terms}. Summing the other terms {leads} to \eqref{eq:est_en_mod}.
\end{proof}

Then, we continue with the expansion of the {dissipation}.

\begin{lem} \label{lem:expand_dissipation}
    Let $m \in \mathcal{H}^2$ satisfying the assumptions of Lemma \ref{lem:expand_eta}, and let $\eta, \mu, {\nu}, \rho$ as defined in the same Lemma. Then there exists $C > 0$ such that
    \begin{equation} \label{eq:est_diss}
        \abs{\int \abs{m \wedge H(m)}^2 \diff x - ( \norm{L_1 \nu}_{L^2}^2 + \norm{L_2 \rho}_{L^2}^2 )} \leq C \norm{\eta}_{H^1} \norm{\eta}_{H^2}^2. 
    \end{equation}
\end{lem}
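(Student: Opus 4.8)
The plan is to compute $m \wedge H(m)$ in the moving frame $(w_{h_0}, n_{h_0}, e_3)$ and isolate its linear part in $\eta$, recognizing the two Schrödinger operators $L_1$ and $L_2$ at leading order. The starting point is the identity $H(m) = H(w_{h_0} + \eta) = H(w_{h_0}) - \delta E(\eta)$, since $H = -\delta E_{h_0}$ is affine and $\delta E$ is linear (the constant field $h_0 e_1$ cancels in the difference). Using \eqref{eq:expr_H_w_h_0_v1}, we have $H(w_{h_0}) = \Lambda\, w_{h_0}$, so $m \wedge H(m) = (w_{h_0} + \eta) \wedge (\Lambda w_{h_0} - \delta E(\eta))$. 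The term $w_{h_0} \wedge \Lambda w_{h_0}$ vanishes, leaving $m \wedge H(m) = \Lambda\,\eta \wedge w_{h_0} - w_{h_0} \wedge \delta E(\eta) - \eta \wedge \delta E(\eta)$. The last term is quadratic in $\eta$ and will be absorbed into the error.

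Next I would plug the expansion \eqref{eq:delta_E_eta} of $\delta E(\eta)$ into $w_{h_0} \wedge \delta E(\eta)$. Since $w_{h_0} \wedge w_{h_0} = 0$, the $w_{h_0}$-component of $\delta E(\eta)$ drops out, and using $w_{h_0} \wedge n_{h_0} = e_3$ and $w_{h_0} \wedge e_3 = -n_{h_0}$ (from the direct orthonormal frame), the cross product becomes
\begin{equation*}
    w_{h_0} \wedge \delta E(\eta) = (-\partial_{xx}\nu + (\partial_x\theta_{h_0})^2 \nu + \cos^2\theta_{h_0}\,\nu)\,e_3 - (-\partial_{xx}\rho + \rho)\,n_{h_0} + O_2^2(\eta).
\end{equation*}
Here the coefficient of $e_3$ is exactly $L_1 \nu$: using \eqref{eq:ODE_theta_2_v1} we have $(\partial_x\theta_{h_0})^2 = \sin^2\theta_{h_0} + 2h_0(1-\cos\theta_{h_0})$, whence $(\partial_x\theta_{h_0})^2 + \cos^2\theta_{h_0} = 1 - 2\sin^2\theta_{h_0} + h_0\cos\theta_{h_0} - h_0\cos\theta_{h_0} + 2h_0(1-\cos\theta_{h_0})$, which must be checked to reduce to the potential of $L_1$, namely $1 - 2\sin^2\theta_{h_0} + h_0\cos\theta_{h_0}$. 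Similarly, the coefficient of $n_{h_0}$ is $-L_2\rho$ after identifying the potential of $L_2$ via \eqref{eq:def_Lambda_v1} and the defining formula for $L_2$. The term $\Lambda\,\eta \wedge w_{h_0}$ contributes only through the $n_{h_0}$ and $e_3$ components of $\eta$ (the $w_{h_0}$-component being quadratic by Lemma~\ref{lem:expand_eta}), and one checks these $\Lambda$-weighted terms recombine with the zeroth-order parts of the frame-derivative coefficients to complete $L_1\nu$ and $L_2\rho$; this bookkeeping is the most error-prone part.

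Having established $m \wedge H(m) = (L_1\nu)\,e_3 - (L_2\rho)\,n_{h_0} + O_2^2(\eta)$ (up to relabeling components, with the two nonzero leading terms orthogonal in the pointwise frame), I take the squared $L^2$ norm. Orthonormality of the frame gives $\abs{m \wedge H(m)}^2 = (L_1\nu)^2 + (L_2\rho)^2 + (\text{cross terms with } O_2^2(\eta)) + (O_2^2(\eta))^2$ pointwise, so after integration the main term is $\norm{L_1\nu}_{L^2}^2 + \norm{L_2\rho}_{L^2}^2$. The error consists of the cross terms $\int (L_i \cdot)\,O_2^2(\eta)$ and the pure quadratic $\int (O_2^2(\eta))^2 = \int O_4^4(\eta)$. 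For the cross terms, $L_1\nu$ and $L_2\rho$ are controlled in $L^2$ by $\norm{\eta}_{H^2}$ (via Lemma~\ref{lem:expand_eta}), while $O_2^2(\eta)$ is estimated by Cauchy–Schwarz and the first bullet of Lemma~\ref{lem:est_negl_terms} (with $k=2$, $\ell=2$), giving $\norm{O_2^2(\eta)}_{L^2} \lesssim \norm{\eta}_{H^1}\norm{\eta}_{H^2}$; the product yields the claimed bound $\norm{\eta}_{H^1}\norm{\eta}_{H^2}^2$. The quartic term $\int O_4^4(\eta)$ is lower order and estimated similarly. The main obstacle is the algebraic verification that the frame-derivative coefficients plus the $\Lambda$-contributions collapse exactly onto $L_1\nu$ and $L_2\rho$; once the two potentials are matched using \eqref{eq:ODE_theta_2_v1} and \eqref{eq:def_Lambda_v1}, the norm estimate is routine.
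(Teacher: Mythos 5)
Your strategy coincides with the paper's own proof: the same decomposition $m \wedge H(m) = \Lambda\,\eta\wedge w_{h_0} - w_{h_0}\wedge\delta E(\eta) - \eta\wedge\delta E(\eta)$ (after cancelling $w_{h_0}\wedge H(w_{h_0})$), the same insertion of \eqref{eq:delta_E_eta} and of the frame cross products, and equivalent error estimates (the paper groups the remainders as $O_4^3(\eta)$ and $O_4^4(\eta)$ and integrates them via Lemma \ref{lem:est_negl_terms}; your Cauchy--Schwarz treatment of the cross terms gives the same bound $\norm{\eta}_{H^1}\norm{\eta}_{H^2}^2$). The overall structure and the conclusion are correct.

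However, the one algebraic step you single out as ``error-prone'' is wrong as written, and your proposal is internally inconsistent about it. The coefficient of $e_3$ in $w_{h_0}\wedge\delta E(\eta)$ \emph{alone} is not $L_1\nu$: by \eqref{eq:ODE_theta_2_v1},
\begin{equation*}
    (\partial_x\theta_{h_0})^2 + \cos^2\theta_{h_0} = \sin^2\theta_{h_0} + 2h_0(1-\cos\theta_{h_0}) + \cos^2\theta_{h_0} = 1 + 2h_0(1-\cos\theta_{h_0}),
\end{equation*}
which differs from the potential $1 - 2\sin^2\theta_{h_0} + h_0\cos\theta_{h_0}$ of $L_1$ exactly by $-\Lambda$; in particular the identity displayed in your proposal is false (its two sides differ by $2\sin^2\theta_{h_0}$). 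The operators $L_1$ and $L_2$ only emerge after adding the contribution $\Lambda\,\eta\wedge w_{h_0} = \Lambda(\rho\,n_{h_0} - \nu\,e_3)$, since by \eqref{eq:def_Lambda_v1}
\begin{equation*}
    (\partial_x\theta_{h_0})^2 + \cos^2\theta_{h_0} + \Lambda = 1 - 2\sin^2\theta_{h_0} + h_0\cos\theta_{h_0},
    \qquad
    1 + \Lambda = 1 - 2\sin^2\theta_{h_0} + 3h_0\cos\theta_{h_0} - 2h_0,
\end{equation*}
so that $\Lambda\,\eta\wedge w_{h_0} - w_{h_0}\wedge\delta E(\eta) = L_2\rho\,n_{h_0} - L_1\nu\,e_3 + O_2^2(\eta)$ (the global sign is irrelevant once you square). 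This is exactly your second, correct account (``the $\Lambda$-weighted terms recombine \ldots to complete $L_1\nu$ and $L_2\rho$''), but it contradicts your earlier claim that the $e_3$-coefficient is ``exactly $L_1\nu$'' before the $\Lambda$ terms are added. With this correction the argument goes through precisely as in the paper.
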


\begin{proof}
    First, since $H(m) = - \delta E (m) + h_0 e_1$ with $\delta E$ linear in $m = w_{h_0} + \eta$, then
    \begin{equation*}
        m \wedge H(m) = w_{h_0} \wedge H(w_{h_0}) + \eta \wedge H (w_{h_0}) - w_{h_0} \wedge \delta E (\eta) - \eta \wedge \delta E (\eta).
    \end{equation*}
    Since $H(w_{h_0}) = \Lambda (x) w_{h_0}$, the first term of this expansion vanishes.
    Therefore, we can expand $\abs{m \wedge H(m)}^2$ as follows:
    \begin{align*}
        \abs{m \wedge H(m)}^2 &= \abs{\eta \wedge H (w_{h_0}) - w_{h_0} \wedge \delta E (\eta) - \eta \wedge \delta E (\eta)}^2 \\
            &= 
                \begin{multlined}[t][7cm]
                    \abs{\Lambda (x) \eta \wedge w_{h_0} - w_{h_0} \wedge \delta E (\eta)}^2 \\
                    \begin{aligned}
                        &+ 2 (\eta \wedge \delta E (\eta)) \cdot (w_{h_0} \wedge \delta E (\eta) - \Lambda (x) \eta \wedge w_{h_0}) \\
                        &+ \abs{\eta \wedge \delta E (\eta)}^2.
                    \end{aligned}
                \end{multlined}
    \end{align*}
    From \eqref{eq:delta_E_eta}, we have
    \begin{align*}
        w_{h_0} \wedge \delta E (\eta) = O_2^2 (\eta) &- (- \partial_{xx} \rho + \rho) \, n_{h_0} \\
            &+ (- \partial_{xx} \nu + (\partial_x \theta_{h_0})^2 \nu + \cos^2 \theta_{h_0} \nu ) \, e_3.
    \end{align*}
    On the other hand, $\eta \wedge w_{h_0} = \rho n_{h_0} - \nu e_3$, so that, using \eqref{eq:ODE_theta_2_v1},
    \begin{align*}
        \Lambda (x) \eta \wedge w_{h_0} - w_{h_0} \wedge \delta E (\eta) = O_2^2 (\eta) &+ \Bigl( - \partial_{xx} \rho + (1 + \Lambda (x)) \rho \Bigr) \, n_{h_0} \\
            &- \Bigl( - \partial_{xx} \nu + \Bigl( (\partial_x \theta_{h_0})^2 + \cos^2 \theta_{h_0} + \Lambda (x) \Bigr) \nu \Bigr) \, e_3.
    \end{align*}
    Using \eqref{eq:def_Lambda_v1} in Theorem \ref{th:stat_sol}, we get
    \begin{align*}
        1 + \Lambda (x) &= 1 - 2 \sin^2 \theta_{h_0} + 3 h_0 \cos \theta_{h_0} - 2 h_0, \\
        (\partial_x \theta_{h_0})^2 + \cos^2 \theta_{h_0} + \Lambda (x) &= 1 - 2 \sin^2 \theta_{h_0} + h_0 \cos \theta_{h_0}.
    \end{align*}
    Therefore, we get
    \begin{equation*}
        \Lambda (x) \eta \wedge w_{h_0} - w_{h_0} \wedge \delta E (\eta) = O_2^2 (\eta) - L_2 \rho \, n_{h_0} + L_1 \nu \, e_3.
    \end{equation*}
    This leads to
    \begin{equation*}
        \abs{\Lambda (x) \eta \wedge w_{h_0} - w_{h_0} \wedge \delta E (\eta)}^2 = \abs{L_2 \rho}^2 + \abs{L_1 \nu}^2 + O_4^3 (\eta) + O_4^4 (\eta).
    \end{equation*}
    Last, we also have $(\eta \wedge H (\eta)) \cdot (w_{h_0} \wedge H (\eta) + \Lambda (x) \eta \wedge w_{h_0}) = O_4^3 (\eta)$ and $\abs{\eta \wedge \delta E (\eta)}^2 = O_4^4 (\eta)$. \eqref{eq:est_diss} is then obtained by combining and integrating all these equalities and by using Lemma \ref{lem:est_negl_terms} for any $\int O_4^\ell (\eta) \diff x$.
\end{proof}

\section{Proof of the instability} \label{sec:instability}

We now delve into the proof of Theorem \ref{th:instability}.
In brief, the idea of the proof consists in tracking down the evolution of the total energy. We know it is decreasing thanks to Proposition \ref{prop:evolution_E_h_0_1} (which is recalled in Section \ref{sec:evo_mod_en}), but the evolution can be better described thanks to the expansions of the energy (Lemma \ref{lem:expand_energy}) and of the {dissipation} (Lemma \ref{lem:expand_dissipation}). As soon as this energy is smaller than the energy of the stationary solution, it decreases exponentially in time and this estimate can be pushed back to the distance between the solution and $g.w_{h_0}$ for any $g \in G$.



\subsection{Summary of the main properties}

\subsubsection{Evolution of the total energy} \label{sec:evo_mod_en}


First, we recall here the evolution of the total energy, \textit{i.e.} Proposition \ref{prop:evolution_E_h_0_1}.

\begin{prop} \label{prop:evolution_E_h_0}
    With the assumption of Theorem \ref{th:lwp}, assuming furthermore that $h (t) = h_0$ is constant, $m_0 \in \mathcal{H}^s$ for some $s \geq 2$ and $\lim_{\pm \infty} m_0 = e_1$, then for all $t \in [0, T_+ (m))$,
    \begin{equation*}
        \frac{\diff}{\diff t} E_{h_0} (m (t)) = - \alpha \int \abs{m \wedge H (m)}^2 \diff x.
    \end{equation*}
\end{prop}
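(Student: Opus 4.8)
Since this statement is word-for-word identical to Proposition \ref{prop:evolution_E_h_0_1}, already established earlier in the paper, the cleanest proof simply invokes that result; I would write that the identity has already been proved there. Nonetheless, let me describe the approach I would take to establish it from scratch, since it makes transparent where the external field enters.

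The plan is to differentiate $E_{h_0}$ along the flow using its variation. Formally, since $\delta E_{h_0}(m) = -H(m)$ by \eqref{eq:delta_E_h0}, the chain rule gives $\frac{\diff}{\diff t}E_{h_0}(m) = -\int H(m)\cdot\partial_t m\,\diff x$. Substituting \eqref{eq:llg}, the conservative term contributes $-\int H(m)\cdot(m\wedge H(m))\,\diff x = 0$, because the scalar triple product vanishes when two entries coincide. For the damping term I would use the identity $m\wedge(m\wedge H(m)) = (m\cdot H(m))\,m - H(m)$ (valid since $\abs{m}=1$), which yields $H(m)\cdot(m\wedge(m\wedge H(m))) = (m\cdot H(m))^2 - \abs{H(m)}^2 = -\abs{m\wedge H(m)}^2$, so the damping term produces exactly $-\alpha\int\abs{m\wedge H(m)}^2\,\diff x$.

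The hard part will be making this rigorous, because $H(m) = -\delta E(m) + h_0 e_1$ lies only in $L^2 + L^\infty$ (the constant field $h_0 e_1$ is not in $L^2$), so $\int H(m)\cdot\partial_t m\,\diff x$ is not literally an $L^2$ pairing and one cannot differentiate naively. To handle this I would split $E_{h_0}(m) = E(m) - h_0\int(m_1-1)\,\diff x$ and treat the two pieces separately. For the first piece I would invoke the energy dissipation identity of Theorem \ref{th:lwp}(3), available since $m_0\in\mathcal{H}^s$ with $s\ge 2$. For the Zeeman piece I would compute $\frac{\diff}{\diff t}\int(m_1-1)\,\diff x$ directly: using \eqref{eq:llg} to express $\partial_t(m_1-1)$ and checking that each resulting term (involving $\partial_{xx}m\cdot(m\wedge e_1)$, $(m\wedge e_1)\cdot(m\wedge\delta E(m))$, and $\abs{m\wedge e_1}^2$) lies in $L^1$, so that differentiation under the integral sign is justified and $m_1-1\in\mathscr{C}^1([0,T_+),L^1)$; a single integration by parts then kills the first term.

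Finally I would combine the two derivatives and reconcile them with the claimed right-hand side via the purely algebraic expansion $\abs{m\wedge H(m)}^2 = \abs{m\wedge\delta E(m)}^2 - 2h_0(m\wedge\delta E(m))\cdot(m\wedge e_1) + h_0^2\abs{m\wedge e_1}^2$, together with $|\delta E(m)|^2 - |m\cdot\delta E(m)|^2 = \abs{m\wedge\delta E(m)}^2$. I expect the only genuine obstacle to be the integrability and differentiability of the Zeeman integral; everything else is elementary algebra and one integration by parts.
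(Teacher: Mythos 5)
Your proposal is correct and follows essentially the same route as the paper: the paper likewise treats this statement as a pure recall of Proposition \ref{prop:evolution_E_h_0_1}, whose proof splits $E_{h_0}$ into $E$ plus the Zeeman term, invokes the dissipation identity of Theorem \ref{th:lwp} for $E$, computes $\frac{\diff}{\diff t}\int (m_1-1)\diff x$ from \eqref{eq:llg} after checking all terms are in $L^1$ (with one integration by parts killing the $\partial_{xx}m\cdot(m\wedge e_1)$ term), and recombines via $|\delta E(m)|^2 - |m\cdot\delta E(m)|^2 = \abs{m\wedge\delta E(m)}^2$ and the expansion of $\abs{m\wedge H(m)}^2$. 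Your identification of the Zeeman integrability/differentiability as the only genuine obstacle matches where the paper's proof concentrates its effort.
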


\subsubsection{Expansion of the energy for $h_0 > 0$}

Now, we recall the expansion of the energy in terms of $\nu$ and $\rho$, starting with the case $h_0 > 0$.

\begin{lem} \label{lem:summary_expand_energy}
    Assume that $h_0 > 0$.
    There exists $C, \lambda > 0$ such that, for any $m = w_{h_0} + \eta \in \mathcal{H}^2$ satisfying
    \begin{equation*}
        \norm{\eta}_{H^1} < \delta_3,
    \end{equation*}
    {where $\delta_3 > 0$ is defined in Lemma \ref{lem:expand_eta},} there holds
    \begin{gather*}
        E_{h_0} (m) - E_{h_0} (w_{h_0}) \leq C \Bigl( \norm{\nu}_{H^1}^2 + \norm{\rho - \langle \rho, \psi_{h_0} \rangle \psi_{h_0}}_{H^1}^2 + \norm{\eta}_{H^1}^3 \Bigr) - \lambda \langle \rho, \psi_{h_0} \rangle^2, \\
        \abs{E_{h_0} (m) - E_{h_0} (w_{h_0})} \leq \frac{1}{\lambda} \norm{\eta}_{H^1}^2,
    \end{gather*}
    where $\nu, \rho$ are defined in Lemma \ref{lem:expand_eta} and $\psi_{h_0}$ in Lemma \ref{lem:schro_op_1}.
\end{lem}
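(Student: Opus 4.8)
The plan is to start from the quadratic expansion of the total energy established in Lemma \ref{lem:expand_energy}, namely
\begin{equation*}
    \Bigl| E_{h_0}(m) - E_{h_0}(w_{h_0}) - \tfrac{1}{2}\bigl( \langle L_1 \nu, \nu \rangle + \langle L_2 \rho, \rho \rangle \bigr) \Bigr| \leq C \norm{\eta}_{H^1}^3,
\end{equation*}
and then control each quadratic form separately using the spectral estimates for $L_1$ and $L_2$ in the case $h_0 > 0$ (Lemma \ref{lem:schro_op_1}). The point is that, for $h_0 > 0$, the operator $L_1$ is \emph{positive} (its kernel is spanned by $\partial_x \theta_{h_0} > 0$ and it has no negative eigenvalue), whereas $L_2$ has exactly one negative eigenvalue $\gamma_2 < 0$ with eigenfunction $\psi_{h_0}$. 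This asymmetry is exactly what produces the saddle-point structure and hence the instability, so it must be reflected in the final inequality.

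First I would treat the $L_1 \nu$ term: by the upper bound $0 \leq \langle L_1 \nu, \nu \rangle_{H^{-1},H^1} \leq \frac{1}{\lambda_1}\norm{\nu}_{H^1}^2$ from Lemma \ref{lem:schro_op_1}, this contributes at most $C\norm{\nu}_{H^1}^2$ to the energy difference, which is absorbed into the first bracketed term of the claim. Next I would treat the $L_2 \rho$ term using the key estimate
\begin{equation*}
    \langle L_2 \rho, \rho \rangle_{H^{-1},H^1} \leq \frac{1}{\lambda_2}\norm{\rho - \langle \rho, \psi_{h_0}\rangle \psi_{h_0}}_{H^1}^2 - \lambda_2 \langle \rho, \psi_{h_0}\rangle^2.
\end{equation*}
The positive part here furnishes the $\norm{\rho - \langle \rho, \psi_{h_0}\rangle \psi_{h_0}}_{H^1}^2$ contribution in the claim, while the negative part $-\lambda_2 \langle \rho, \psi_{h_0}\rangle^2$ gives precisely the destabilizing term $-\lambda \langle \rho, \psi_{h_0}\rangle^2$ after setting $\lambda$ to a suitable multiple (e.g.\ $\lambda = \lambda_2/2$) and choosing $C$ large enough to dominate all the $\frac{1}{\lambda_i}$ constants. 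Combining the two quadratic estimates with the cubic remainder $C\norm{\eta}_{H^1}^3$ yields the first displayed inequality of the lemma.

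For the second inequality, the two-sided bound $|\langle L_i v, v\rangle_{H^{-1},H^1}| \leq \frac{1}{\lambda_i}\norm{v}_{H^1}^2$ (valid for both operators by Lemma \ref{lem:schro_op_1}) gives $|\langle L_1 \nu,\nu\rangle| + |\langle L_2 \rho,\rho\rangle| \lesssim \norm{\nu}_{H^1}^2 + \norm{\rho}_{H^1}^2 \lesssim \norm{\eta}_{H^1}^2$, where the last step uses the norm equivalence $\norm{(\nu,\rho)}_{H^1} \leq C_2 \norm{\eta}_{H^1}$ from Lemma \ref{lem:expand_eta}; absorbing the cubic remainder for $\norm{\eta}_{H^1} < \delta_3$ small then gives $|E_{h_0}(m) - E_{h_0}(w_{h_0})| \leq \frac{1}{\lambda}\norm{\eta}_{H^1}^2$ after renaming constants. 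I do not anticipate a serious obstacle: the main care needed is bookkeeping of constants so that the single $\lambda$ and $C$ in the statement simultaneously serve both inequalities, and ensuring throughout that the $H^{-1}$--$H^1$ duality brackets appearing in Lemma \ref{lem:schro_op_1} are legitimately applied to $\nu, \rho \in H^1$ (guaranteed by Lemma \ref{lem:expand_eta}). The only conceptual subtlety is recognizing that the sign of the $\langle \rho, \psi_{h_0}\rangle^2$ term must be negative, which is the whole mechanism behind the instability and is inherited directly from $\gamma_2 < 0$.
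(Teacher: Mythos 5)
Your proposal is correct and follows exactly the paper's intended argument: the paper's own proof consists of the single remark that the lemma ``is straightforwardly obtained by Lemma \ref{lem:expand_energy} and Lemma \ref{lem:schro_op_1},'' and your plan fills in precisely those details (positivity bound for $\langle L_1 \nu,\nu\rangle$, the signed estimate for $\langle L_2\rho,\rho\rangle$ producing the $-\lambda\langle\rho,\psi_{h_0}\rangle^2$ term, the two-sided bounds plus the norm equivalence of Lemma \ref{lem:expand_eta} for the second inequality, and absorption of the cubic remainder).
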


This lemma is straightforwardly obtained by Lemma \ref{lem:expand_energy} and Lemma \ref{lem:schro_op_1}.

\subsubsection{For $h_0 \in (-1, 0)$}

Now, we recall the same result for the case $h_0 \in (-1, 0)$.

\begin{lem} \label{lem:summary_expand_energy_2}
    Assume that $h_0 \in (-1, 0)$.
    There exists $C, \lambda > 0$ such that, for any $m = w_{h_0} + \eta \in \mathcal{H}^2$ satisfying
    \begin{equation*}
        \norm{\eta}_{H^1} < \delta_3,
    \end{equation*}
    {where $\delta_3 > 0$ is defined in Lemma \ref{lem:expand_eta},} there holds
    \begin{gather*}
        E_{h_0} (m) - E_{h_0} (w_{h_0}) \leq C \Bigl( \norm{\rho}_{H^1}^2 + \norm{\nu - \langle \nu, \psi_{h_0} \rangle \psi_{h_0}}_{H^1}^2 + \norm{\eta}_{H^1}^3 \Bigr) - \lambda \langle \nu, \psi_{h_0} \rangle^2, \\
        \abs{E_{h_0} (m) - E_{h_0} (w_{h_0})} \leq \frac{1}{\lambda} \norm{\eta}_{H^1}^2,
    \end{gather*}
    where $\nu, \rho$ are defined in Lemma \ref{lem:expand_eta} and $\psi_{h_0}$ in Lemma \ref{lem:schro_op_2}.
\end{lem}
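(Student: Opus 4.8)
The plan is to combine the energy expansion of Lemma~\ref{lem:expand_energy} with the spectral estimates of Lemma~\ref{lem:schro_op_2}, exactly as in the proof of Lemma~\ref{lem:summary_expand_energy} but with the roles of $L_1$ and $L_2$ (equivalently, of $\nu$ and $\rho$) interchanged. This swap is forced by the fact that for $h_0 \in (-1,0)$ it is $L_1$ that carries the unique negative eigenvalue (with eigenfunction $\psi_{h_0}$ and kernel $\partial_x \theta_{h_0}$), whereas $L_2$ is now positive with kernel $\sin \theta_{h_0} > 0$.

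First I would invoke \eqref{eq:est_en_mod}, which yields
\begin{equation*}
    E_{h_0}(m) - E_{h_0}(w_{h_0}) \leq \tfrac{1}{2}\langle L_1 \nu, \nu \rangle + \tfrac{1}{2}\langle L_2 \rho, \rho \rangle + C \norm{\eta}_{H^1}^3.
\end{equation*}
For the $L_1$ term I would apply the upper estimate of Lemma~\ref{lem:schro_op_2}, namely $\langle L_1 \nu, \nu \rangle \leq \frac{1}{\lambda_1}\norm{\nu - \langle \nu, \psi_{h_0}\rangle \psi_{h_0}}_{H^1}^2 - \lambda_1 \langle \nu, \psi_{h_0}\rangle^2$, which isolates the single negative-curvature direction $\psi_{h_0}$. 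For the $L_2$ term I would use its positivity, $0 \leq \langle L_2 \rho, \rho \rangle \leq \frac{1}{\lambda_2}\norm{\rho}_{H^1}^2$. Summing these two bounds and relabelling constants (taking $\lambda := \lambda_1/2$ and $C$ large enough to absorb $\frac{1}{2\lambda_1}$, $\frac{1}{2\lambda_2}$ and the cubic remainder) gives the first inequality of the statement.

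For the second (two-sided) inequality, I would return to \eqref{eq:est_en_mod} but instead bound both quadratic forms by their absolute estimates $\abs{\langle L_i v, v \rangle} \leq \frac{1}{\lambda_i}\norm{v}_{H^1}^2$ from Lemma~\ref{lem:schro_op_2}. Then, using the norm equivalence $\norm{\nu}_{H^1} + \norm{\rho}_{H^1} \lesssim \norm{\eta}_{H^1}$ from Lemma~\ref{lem:expand_eta} and absorbing the cubic remainder through $\norm{\eta}_{H^1}^3 \leq \delta_3 \norm{\eta}_{H^1}^2$ (valid since $\norm{\eta}_{H^1} < \delta_3$), both contributions are controlled by a constant multiple of $\norm{\eta}_{H^1}^2$, which delivers the claimed absolute bound after possibly shrinking $\lambda$ so that a single pair $(C,\lambda)$ serves both inequalities.

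There is no genuine obstacle here: the argument is a direct transcription of the $h_0 > 0$ case. The only point requiring care is the bookkeeping of which operator is positive and which is indefinite, since for $h_0 \in (-1,0)$ the instability direction now lives in the $\nu$-component (governed by $L_1$) rather than the $\rho$-component; correspondingly the surviving "good" quadratic form is $\norm{\rho}_{H^1}^2 + \norm{\nu - \langle \nu, \psi_{h_0}\rangle \psi_{h_0}}_{H^1}^2$ and the coercivity-loss term is $-\lambda \langle \nu, \psi_{h_0}\rangle^2$, mirroring the statement.
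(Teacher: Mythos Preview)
Your proposal is correct and follows exactly the approach indicated in the paper, which simply states that the lemma ``is easily obtained by Lemma~\ref{lem:expand_energy} and Lemma~\ref{lem:schro_op_2}.'' Your write-up supplies the routine details (applying the $L_1$ upper bound with its negative direction $\psi_{h_0}$, the positivity of $L_2$, the equivalence $\norm{(\nu,\rho)}_{H^1}\lesssim\norm{\eta}_{H^1}$ from Lemma~\ref{lem:expand_eta}, and the absorption $\norm{\eta}_{H^1}^3\le\delta_3\norm{\eta}_{H^1}^2$) that the paper leaves implicit.
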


Once again, this lemma is easily obtained by Lemma \ref{lem:expand_energy} and Lemma \ref{lem:schro_op_2}.

\subsubsection{Expansion of the {dissipation}}

Last, we summarize the expansion of the {dissipation} in terms of $\nu$ and $\rho$.

\begin{lem} \label{lem:summary_expand_dissipation}
    Assume that $h_0 > 0$ or $h_0 \in (-1, 0)$.
    There exists $C, \lambda > 0$ such that, for any $m = w_{h_0} + \eta \in \mathcal{H}^2$ satisfying
    \begin{equation*}
        \norm{\eta}_{H^1} < \delta_3,
    \end{equation*}
    {where $\delta_3 > 0$ is defined in Lemma \ref{lem:expand_eta},} there holds
    \begin{equation*}
        - \int \abs{m \wedge H(m)}^2 \diff x \leq - \lambda \norm{\eta}_{H^1}^2 + \frac{1}{\lambda} (\langle \nu, \partial_x \theta_{h_0} \rangle^2 + \langle \rho, \sin \theta_{h_0} \rangle^2)
    \end{equation*}
    where $\nu, \rho$ are defined in Lemma \ref{lem:expand_eta} and $\psi_{h_0}$ in Lemma \ref{lem:schro_op_1}.
\end{lem}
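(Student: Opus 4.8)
The plan is to combine the Sobolev expansion of the dissipation from Lemma~\ref{lem:expand_dissipation} with the $H^2$ coercivity of the two Schrödinger operators, and then to trade the resulting $H^2$ control for the $H^1$ bound demanded by the statement via the norm equivalence of Lemma~\ref{lem:expand_eta}. I would treat both sign regimes at once, the only difference being which of $L_1, L_2$ is positive and which is indefinite; crucially the pairing $\nu \leftrightarrow L_1 \leftrightarrow \partial_x \theta_{h_0}$ and $\rho \leftrightarrow L_2 \leftrightarrow \sin \theta_{h_0}$ is the same in both cases.

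First I would start from Lemma~\ref{lem:expand_dissipation}, which gives
\[
    \int \abs{m \wedge H(m)}^2 \diff x \geq \norm{L_1 \nu}_{L^2}^2 + \norm{L_2 \rho}_{L^2}^2 - C \norm{\eta}_{H^1} \norm{\eta}_{H^2}^2,
\]
and then apply the last ($H^2$) estimate of Lemma~\ref{lem:schro_op_1} (for $h_0 > 0$) or Lemma~\ref{lem:schro_op_2} (for $h_0 \in (-1,0)$) to $\nu$ through $L_1$ and to $\rho$ through $L_2$. In both regimes the estimate associated with the kernel element of each operator ($\partial_x \theta_{h_0}$ for $L_1$, $\sin \theta_{h_0}$ for $L_2$) reads
\[
    \norm{L_1 \nu}_{L^2}^2 \geq 4 \lambda_1 \norm{\nu}_{H^2}^2 - \tfrac{1}{\lambda_1} \langle \nu, \partial_x \theta_{h_0} \rangle^2, \qquad \norm{L_2 \rho}_{L^2}^2 \geq 4 \lambda_2 \norm{\rho}_{H^2}^2 - \tfrac{1}{\lambda_2} \langle \rho, \sin \theta_{h_0} \rangle^2,
\]
so the two scalar products on the right-hand side of the statement are exactly those produced here (they are precisely the quantities annihilated by the orthogonality conditions of Lemma~\ref{lem:modulation} when the estimate is later applied).

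The crux is to absorb the cubic error $C \norm{\eta}_{H^1} \norm{\eta}_{H^2}^2$ into the positive quadratic terms $4\lambda_1 \norm{\nu}_{H^2}^2 + 4\lambda_2 \norm{\rho}_{H^2}^2$; this is why Lemma~\ref{lem:expand_dissipation} and the operator bounds are stated at the $H^2$ level rather than the $H^1$ level. For this I would establish the two-sided equivalence $\norm{\eta}_{H^2}^2 \simeq \norm{\nu}_{H^2}^2 + \norm{\rho}_{H^2}^2$. The upper bound $\norm{(\nu,\rho)}_{H^2} \leq C_2 \norm{\eta}_{H^2}$ is part of Lemma~\ref{lem:expand_eta}; for the reverse I use $\eta = \mu w_{h_0} + \nu n_{h_0} + \rho e_3$ together with $\mu = O_0^2(\eta)$, so that Lemma~\ref{lem:est_negl_terms} yields $\norm{\mu}_{H^2} \lesssim \norm{\eta}_{H^1} \norm{\eta}_{H^2}$. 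Shrinking $\delta_3$ lets me absorb the $\mu$ contribution and obtain $\norm{\eta}_{H^2}^2 \leq C'(\norm{\nu}_{H^2}^2 + \norm{\rho}_{H^2}^2)$, whence $C \norm{\eta}_{H^1} \norm{\eta}_{H^2}^2 \leq C C' \delta_3 (\norm{\nu}_{H^2}^2 + \norm{\rho}_{H^2}^2)$; a further reduction of $\delta_3$ makes this at most $2\min(\lambda_1,\lambda_2)(\norm{\nu}_{H^2}^2 + \norm{\rho}_{H^2}^2)$, leaving a net positive multiple of $\norm{\nu}_{H^2}^2 + \norm{\rho}_{H^2}^2$.

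Finally I would pass from $H^2$ down to $H^1$: since $\norm{\cdot}_{H^2} \geq \norm{\cdot}_{H^1}$ and, by Lemma~\ref{lem:expand_eta}, $\norm{\nu}_{H^1}^2 + \norm{\rho}_{H^1}^2 = \norm{(\nu,\rho)}_{H^1}^2 \geq C_2^{-2} \norm{\eta}_{H^1}^2$, the surviving positive term dominates a multiple of $\norm{\eta}_{H^1}^2$. Negating the inequality and choosing $\lambda = \min(\lambda_1, \lambda_2, \, 2\min(\lambda_1,\lambda_2)/C_2^2)$ gives the claimed bound, the two scalar-product terms carrying the coefficient $1/\lambda$. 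The main obstacle is exactly this $H^2$ absorption step: the error term in Lemma~\ref{lem:expand_dissipation} controls only the weaker $H^2$ quantity, so the conclusion would fail unless one genuinely exploits the quadratic smallness of $\mu$ to recover the $H^2$ norm equivalence and feed it back against the coercive operator terms.
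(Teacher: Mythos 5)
Your proof is correct and follows essentially the same route as the paper, whose entire proof of this lemma is the one-line combination of Lemma \ref{lem:expand_dissipation} with Lemma \ref{lem:schro_op_1} (for $h_0>0$) or Lemma \ref{lem:schro_op_2} (for $h_0\in(-1,0)$); the step you rightly identify as the crux, absorbing the cubic error $C\norm{\eta}_{H^1}\norm{\eta}_{H^2}^2$ via the reverse equivalence $\norm{\eta}_{H^2}\lesssim\norm{(\nu,\rho)}_{H^2}$ obtained from $\mu=O_0^2(\eta)$ (with a harmless shrinking of $\delta_3$, legitimate since Lemma \ref{lem:expand_eta} persists for smaller $\delta_3$), is exactly the detail the paper leaves implicit. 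One further point in your favor: you pair $L_1$ with $\partial_x\theta_{h_0}$ and $L_2$ with $\sin\theta_{h_0}$ in both regimes, as the abstract Lemmas \ref{lem:abstract_schro_op_1} and \ref{lem:abstract_schro_op_2} dictate (the scalar product must be against the kernel element), whereas the $H^2$ estimates as printed in Lemma \ref{lem:schro_op_2} have the two kernel elements swapped, which is a typo (as written they fail when tested on $v=\partial_x\theta_{h_0}$ and $v=\sin\theta_{h_0}$ themselves), so your version is both the correct one and the one needed to produce the scalar products appearing in the statement.
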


This last result is obtained thanks to Lemma \ref{lem:expand_dissipation} along with Lemma \ref{lem:schro_op_1} (when $h_0 > 0$) and Lemma \ref{lem:schro_op_2} (when $h_0 \in (-1, 0)$).

\subsection{$V_\delta$ is non-empty}
\label{subsec:V_delta_not_empty}

We are now in position to prove Theorem \ref{th:instability}. We begin by the first part, which is the existence of initial data arbitrarily close to $m_{h_0}$ with a strictly smaller total energy.
We recall this here.

\begin{prop}
    Let $h_0 \in (-1, 0) \cup (0, \infty)$.
    For any {$\delta > 0$}, the following set
    \begin{equation*}
        V_\delta \coloneqq \{ m_0 \in \mathcal{H}^2 \, | \, \norm{m_0 - w_{h_0}}_{H^1} < \delta, E_{h_0} (m_0) < E_{h_0} (w_{h_0}) \}
    \end{equation*}
    is non-empty.
\end{prop}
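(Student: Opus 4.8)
The plan is to exploit the fact that $w_{h_0}$ is a saddle point of $E_{h_0}$: the quadratic form appearing in Lemma~\ref{lem:expand_energy}, namely $\tfrac12(\langle L_1\nu,\nu\rangle+\langle L_2\rho,\rho\rangle)$, admits a strictly negative direction, since exactly one of the two operators ($L_2$ when $h_0>0$, $L_1$ when $h_0\in(-1,0)$) has a negative eigenvalue $\gamma<0$ with normalized eigenfunction $\psi_{h_0}$ (Lemmas~\ref{lem:schro_op_1} and~\ref{lem:schro_op_2}). I would move along this unstable direction while remaining on the sphere, and then check that the energy strictly decreases at second order.

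Concretely, set $\zeta\coloneqq e_3$ if $h_0>0$ and $\zeta\coloneqq n_{h_0}$ if $h_0\in(-1,0)$; in both cases $\zeta\cdot w_{h_0}=0$ and $\abs{\zeta}=1$ pointwise, so the candidate
\begin{equation*}
    m_\varepsilon\coloneqq\frac{w_{h_0}+\varepsilon\,\psi_{h_0}\,\zeta}{\sqrt{1+\varepsilon^2\psi_{h_0}^2}}
\end{equation*}
takes values in $\mathbb{S}^2$. Since $\psi_{h_0}$ is an eigenfunction, for the negative eigenvalue $\gamma<0$, of a Schrödinger operator whose potential tends to $1+h_0>0$, it decays exponentially and therefore lies in every $H^k$; from this one checks that $m_\varepsilon\in\mathcal{H}^2$ with $\lim_{\pm\infty}m_\varepsilon=e_1$, and that $\eta\coloneqq m_\varepsilon-w_{h_0}$ satisfies $\norm{\eta}_{H^1}\le C\varepsilon$.

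The next step is to read off the frame components of $\eta$. Expanding $(1+\varepsilon^2\psi_{h_0}^2)^{-1/2}=1-\tfrac12\varepsilon^2\psi_{h_0}^2+O(\varepsilon^4)$ and using $\zeta\cdot w_{h_0}=0$ together with the pointwise orthogonality of $w_{h_0}$, $n_{h_0}$, $e_3$, I find that the component of $\eta$ along $\zeta$ equals $\varepsilon\psi_{h_0}(1+O(\varepsilon^2))$ while the component along the remaining transverse vector vanishes identically; that is, $\rho=\varepsilon\psi_{h_0}(1+O(\varepsilon^2))$ and $\nu\equiv0$ when $h_0>0$, and symmetrically $\nu=\varepsilon\psi_{h_0}(1+O(\varepsilon^2))$, $\rho\equiv0$ when $h_0\in(-1,0)$. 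Plugging this into Lemma~\ref{lem:expand_energy} and using $L_2\psi_{h_0}=\gamma\psi_{h_0}$ (resp.\ $L_1\psi_{h_0}=\gamma\psi_{h_0}$) with $\norm{\psi_{h_0}}_{L^2}=1$ gives
\begin{equation*}
    E_{h_0}(m_\varepsilon)-E_{h_0}(w_{h_0})=\tfrac12\gamma\,\varepsilon^2+O(\varepsilon^3).
\end{equation*}
As $\gamma<0$, the quadratic term dominates: for $\varepsilon$ small enough this difference is bounded above by $\tfrac14\gamma\varepsilon^2<0$, while at the same time $\norm{m_\varepsilon-w_{h_0}}_{H^1}\le C\varepsilon<\delta$. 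Hence $m_\varepsilon\in V_\delta$ for such $\varepsilon$, which proves $V_\delta\neq\emptyset$.

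The computation is essentially routine once the right perturbation is selected; the only point requiring genuine care is the construction itself, namely producing an honest $\mathbb{S}^2$-valued, $\mathcal{H}^2$ perturbation along the unstable eigendirection (this is why I normalise onto the sphere rather than merely adding $\varepsilon\psi_{h_0}\zeta$), and verifying that the normalisation feeds into $\mu$ only at order $\varepsilon^2$ so as not to spoil the sign of the second-order term. The exponential decay of $\psi_{h_0}$ is exactly what guarantees $m_\varepsilon-w_{h_0}\in H^2$ and that the cubic remainder controlled by $\norm{\eta}_{H^1}^3$ in Lemma~\ref{lem:expand_energy} is genuinely $O(\varepsilon^3)$, hence negligible against $\tfrac12\gamma\varepsilon^2$.
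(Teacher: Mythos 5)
Your proposal is correct and follows essentially the same route as the paper: perturb $w_{h_0}$ along the unstable eigendirection $\psi_{h_0}\,e_3$ (for $h_0>0$) or $\psi_{h_0}\,n_{h_0}$ (for $h_0\in(-1,0)$), restore the $\mathbb{S}^2$ constraint by an $O(\varepsilon^2)$ adjustment of the $w_{h_0}$-component, and conclude from the quadratic energy expansion that the negative eigenvalue forces $E_{h_0}(m_\varepsilon)<E_{h_0}(w_{h_0})$ for $\varepsilon$ small. The only cosmetic difference is that you renormalize by dividing by $\sqrt{1+\varepsilon^2\psi_{h_0}^2}$ whereas the paper adds the explicit correction $\varepsilon_0\Phi\,w_{h_0}$ with $1+\varepsilon_0\Phi=\sqrt{1-\varepsilon_0^2\abs{\psi_{h_0}}^2}$; both parametrize the same circle in the plane spanned by $w_{h_0}(x)$ and the transverse direction, and both yield the same conclusion.
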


The proof is constructivist, although it can be easily shown that an infinite number of functions are in the set $V_\delta$, which takes the form of a cone near $m_{h_0}$.

\begin{proof}
    \underline{1st case}: $h_0 > 0$
    
    {Let $\psi_{h_0}$ as defined in Lemma \ref{lem:schro_op_1}.}
    First, we point out that $\psi_{h_0} \in H^1 \subset L^\infty$. 
    Let $\varepsilon_0 \neq 0$ small enough to be fixed later, with ${\abs{\varepsilon_0}} < \frac{1}{2 \norm{\psi}_{L^\infty}}$.
    Let ${\eta_0} \coloneqq \varepsilon_0 (\psi_{h_0} e_3 + \Phi w_{h_0})$ where $\Phi$ is defined by
    \begin{equation*}
        {\Phi \coloneqq \frac{- 1 + \sqrt{1 - \varepsilon_0^2 \abs{\psi_{h_0}}^2}}{\varepsilon_0} = - \frac{\varepsilon_0 \abs{\psi_{h_0}}^2}{1 + \sqrt{1 - \varepsilon_0^2 \abs{\psi_{h_0}}^2}}}.
    \end{equation*}
    By assumption on $\varepsilon_0$, we know that
    \begin{equation*}
        \frac{1}{2} < 1 - 2 \varepsilon_0^2 \abs{\psi_{h_0}}^2 < 1.
    \end{equation*}
    Moreover, since $\psi_{h_0} \in H^k$ for all $k \in \mathbb N$, then so is $\abs{\psi_{h_0}}^2$.
    This proves that $\Phi$ is well defined and it is of the same regularity as $\psi_{h_0}$: $\Phi \in H^k$ for all $k \in \mathbb N$, with
    \begin{equation*}
        \norm{\Phi}_{H^k} \leq C_k {\abs{\varepsilon_0}},
    \end{equation*}
    for some $C_k > 0$ depending on $k$ and on $\norm{\psi_{h_0}}_{H^k}$.
    By the definition of $\Phi$, we have
    \begin{equation*}
        \Phi = - \frac{\varepsilon_0}{2} (\abs{\psi_{h_0}}^2 + \abs{\Phi}^2),
    \end{equation*}
    and
    \begin{equation*}
        \abs{w_{h_0} + \eta_0}^2 = 1 + 2 w_{h_0} \cdot \eta_0 + \abs{\eta_0}^2 = 1 + 2 \varepsilon_0 \Phi + \varepsilon_0^2 (\abs{\psi_{{h_0}}}^2 + \abs{\Phi}^2) = 1.
    \end{equation*}
    Thus, there holds
    \begin{equation*}
        m_0 \coloneqq w_{h_0} + \eta_0 \in \mathcal{H}^k
    \end{equation*}
    and
    \begin{equation*}
        \norm{m_0 - w_{h_0}}_{H^k} \leq C_k {\abs{\varepsilon_0}}
    \end{equation*}
    for all $k \geq 1$. Assuming $C_1 {\abs{\varepsilon_0}} \leq \delta$ leads to $\norm{m_0 - w_{h_0}}_{H^1} \leq \delta$.
    Last, using Lemma \ref{lem:summary_expand_energy}, we get, for some ${C, \lambda} > 0$, 
    \begin{equation*}
        E_{h_0} (m_0) - E_{h_0} (w_{h_0}) \leq - \lambda \varepsilon_0^2 + C \varepsilon_0^3.
    \end{equation*}
    By taking $\abs{\varepsilon_0}$ small enough, we get
    \begin{equation*}
        E_{h_0} (m_0) - E_{h_0} (w_{h_0}) \leq - \frac{1}{2} \lambda_2 \varepsilon_0^2 < 0.
    \end{equation*}
    Therefore, $m_0 \in V_\delta$.
    
    \medskip
    
    \underline{2nd case}: $h_0 \in (-1, 0)$
    
    {Let $\psi_{h_0}$ as defined in Lemma \ref{lem:schro_op_2}.}
    Once again, $\psi_{h_0} \in H^1 \subset L^\infty$, and for $\varepsilon_0$ small enough with ${\abs{\varepsilon_0}} < \frac{1}{2 \norm{\psi}_{L^\infty}}$, we define $\eta_0 \coloneqq \varepsilon_0 (\psi_{h_0} n_{h_0} + \Phi w_{h_0})$ where $\Phi$ is defined like previously, by
    \begin{equation*}
        \Phi \coloneqq \frac{- 1 + \sqrt{1 - \varepsilon_0^2 \abs{\psi_{h_0}}^2}}{\varepsilon_0} = - \frac{\varepsilon_0 \abs{\psi_{h_0}}^2}{1 + \sqrt{1 - 2 \varepsilon_0^2 \abs{\psi_{h_0}}^2}}.
    \end{equation*}
    The same arguments as for $h_0 > 0$ hold here, and we get $m_0 \in V_\delta$ for any $\varepsilon_0$ small enough
\end{proof}

\begin{rem} \label{rem:more_explicit_V_delta}
    {In the proof above, one} can also substitute $\psi_{h_0}$ by $\partial_x \theta_{h_0}$ in the case $h_0 > 0$ and by $\sin \theta_{h_0}$ in the case $h_0 \in (-1, 0)$.
    Indeed, Remarks \ref{rem:eigenvalue_L2_h0_pos} and \ref{rem:eigenvalue_L1_h0_neg} show that, in their respective case, $\langle L_2 \partial_x \theta_{h_0}, \partial_x \theta_{h_0} \rangle < 0$ and $\langle L_1 \sin \theta_{h_0}, \sin \theta_{h_0} \rangle < 0$, and the same arguments still apply.
    This remark will be important for numerical simulations, as $\psi_{h_0}$ is not explicit whereas $\partial_x \theta_{h_0}$ and $\sin \theta_{h_0}$ can be easily computed once we have (numerically) computed $\theta_{h_0}$.
\end{rem}

\subsection{Nonlinear instability of the orbit of $w_{h_0}$}

Now, we are in position to {complete} the proof of Theorem \ref{th:instability}.

\begin{proof}[Proof of Theorem \ref{th:instability}]
    Let $\varepsilon \coloneqq \min(\frac{\delta_1}{2}, \frac{\delta_3}{1 + 2 C_1})$ and $0 < \delta < \varepsilon$, where $\delta_1$ and $C_1$ are defined in Lemma \ref{lem:modulation} and $\delta_3$ is defined in Lemma \ref{lem:expand_eta}.
    Let $m_0 \in V_\delta$.
    By Proposition \ref{prop:evolution_E_h_0}, there obviously holds that $E_{h_0} (m (t))$ is non-increasing, and thus
    \begin{equation*}
        E_{h_0} (m(t)) \leq E_{h_0} (m_0) < E_{h_0} (w_{h_0}) = E_{h_0} (g.w_{h_0}),
    \end{equation*}
    for any $g \in G$ and for all $t \geq 0$.
    In particular, $\mathcal{E} (t) \coloneqq E_{h_0} (m(t)) - E_{h_0} (w_{h_0}) < 0$.
    Define
    \begin{equation*}
        T \coloneqq {\sup} \{ \tau \geq 0 \, | \, \forall t \in [0, \tau], \inf_{g \in G} \norm{m (t) - g.w_{h_0}}_{H^1} \leq \varepsilon \}.
    \end{equation*}
    Thanks to the continuity of the flow in $\mathcal{H}^1$ (see part \ref{item:continuity_flow} of Theorem \ref{th:lwp}) and with the assumptions on the initial data, there holds $T > 0$.
    Let $0 < \tau < T$. By definition of $T$, we have $\inf_{g \in G} \norm{m (t) - g.w_{h_0}}_{H^1} \leq \varepsilon \leq \frac{1}{2} \delta_1$ for all $t \in [0, \tau]$.
    Therefore, for any $t \in [0, T]$, we can find some $\tilde g (t) \in G$ such that $\norm{m (t) - \tilde g (t).w_{h_0}}_{H^1} = \norm{(- \tilde g (t)).m (t) - w_{h_0}}_{H^1} < 2 \varepsilon \leq \delta_1$
    Then, from Lemma \ref{lem:modulation} applied to $(- \tilde g (t)).m (t)$ for any $t \in [0, \tau]$, we get some $\hat g (t)$ such that, by defining $g \coloneqq \tilde g + \hat g$ and {$\eta (t) \coloneqq (- g(t)) . m(t) - w_{h_0}$,}
    \begin{equation} \label{eq:bound_m_mod_vs_wh0_H1}
        \norm{m (t) - g(t) . w_{h_0}}_{H^1} = \norm{\eta}_{H^1} \leq C_1 \norm{(- \tilde g (t)).m (t) - w_{h_0}}_{H^1} < 2 C_1 \varepsilon < \delta_3,
    \end{equation}
    with the orthogonality conditions
    \begin{equation*}
        \int \eta (t) \cdot \partial_x w_{h_0} \diff x = \int \eta (t) \cdot (e_1 \wedge w_{h_0}) \diff x = 0, \qquad \text{for all } t \in [0, \tau].
    \end{equation*}
    {For any fixed $t \in [0, \tau]$,} define $\mu (t), \nu (t), \rho (t)$ from Lemma \ref{lem:expand_eta} applied to $(-g (t)).m (t)${. We can} apply {either} Lemma \ref{lem:summary_expand_energy} if $h_0 > 0$ {or} Lemma \ref{lem:summary_expand_energy_2} if $h_0 \in (-1, 0)$ {on $(- g(t)).m(t)$ again, as this function satisfies the assumption of these lemmas thanks to \eqref{eq:bound_m_mod_vs_wh0_H1}. Therefore, for $\lambda > 0$ defined in Lemma \ref{lem:summary_expand_energy} or Lemma \ref{lem:summary_expand_energy_2}}, there holds 
    \begin{equation} \label{eq:first_est_main_proof}
        \norm{\eta (t)}_{H^1}^2 \geq \lambda \abs{E_{h_0} ((- g(t)).m (t)) - E_{h_0} (w_{h_0})} = - \lambda \mathcal{E} (t), \qquad {\text{for all } t \in [0, \tau]},
    \end{equation}
    since $\mathcal{E} (t) < 0$.
    On the other hand, by {Proposition \ref{prop:evolution_E_h_0} and} Lemma \ref{lem:summary_expand_dissipation},
    \begin{equation*}
        \frac{\diff}{\diff t} E_{h_0} (m(t)) \leq - \lambda \norm{\eta (t)}_{H^1}^2,
    \end{equation*}
    thus there also holds for all $t \in [0, \tau]$
    \begin{equation*}
        \mathcal{E} (t) - \mathcal{E} (0) \leq - \lambda \int_0^t \norm{\eta (s)}_{H^1}^2 \diff s.
    \end{equation*}
    Using \eqref{eq:first_est_main_proof}, we obtain
    \begin{equation} \label{eq:est_energy}
        \mathcal{E} (t) \leq \mathcal{E} (0) + \lambda^2 \int_0^t \mathcal{E} (s) \diff s,
    \end{equation}
    for all $t \in [0, \tau]$. This proves that ${\mathfrak{G}} (t) \coloneqq e^{- \lambda^2 t} \int_0^t \mathcal{E} (s) \diff s$ is decreasing, and more exactly
    \begin{equation*}
        {\mathfrak{G}}'(t) \leq \mathcal{E} (0) e^{- \lambda^2 t}.
    \end{equation*}
    By integration and since ${\mathfrak{G}} (0) = 0$, it yields
    \begin{equation*}
        {\mathfrak{G}} (t) \leq \frac{1 - e^{- \lambda^2 t}}{\lambda^2} \mathcal{E} (0),
    \end{equation*}
    which leads to
    \begin{equation*}
        \int_0^t \mathcal{E} (s) \diff s \leq \frac{e^{\lambda^2 t} - 1}{\lambda^2} \mathcal{E} (0).
    \end{equation*}
    Putting this estimate into \eqref{eq:est_energy}, we finally obtain
    \begin{equation} \label{eq:est_lower_bound_energy}
        \mathcal{E} (t) \leq e^{\lambda^2 t} \mathcal{E} (0).
    \end{equation}
    Now, we show that such an estimate proves that, for all $t \in [0, \tau]$,
    \begin{equation} \label{eq:est_inf_m_stat_sol}
        \inf_{g \in G} \norm{m (t) - g.w_{h_0}}_{H^1} \geq \lambda \abs{\mathcal{E} (0)} e^{\lambda^2 t}.
    \end{equation}
    Indeed, let $t \in [0, \tau]$ and $g_0 \in G$. Since we already know that $\inf_{g \in G} \norm{m (t) - g.w_{h_0}}_{H^1} \leq \varepsilon < \delta_3$, then we can assume that $\norm{m (t) - g_0.w_{h_0}}_{H^1} < \delta_3$. Therefore, we can apply once again Lemma \ref{lem:summary_expand_energy} to $(-g_0).m (t)$, which gives
    \begin{align*}
        \norm{m (t) - g_0.w_{h_0}}_{H^1}^2 &= \norm{(-g_0).m(t) - w_{h_0}}_{H^1}^2 \\
            &\geq \lambda \abs{E_{h_0} ((- g_0 (t)).m (t)) - E_{h_0} (w_{h_0})} = \lambda \abs{E_{h_0} (m (t)) - E_{h_0} (w_{h_0})} = - \lambda \mathcal{E} (t).
    \end{align*}
    Thus, using \eqref{eq:est_lower_bound_energy} along with the fact that $\mathcal{E} (0) < 0$, we get $\norm{m (t) - g_0.w_{h_0}}_{H^1} \geq \lambda \abs{\mathcal{E} (0)} e^{\lambda^2 t}$.
    This estimate is true for general $g_0 \in G$, which shows \eqref{eq:est_inf_m_stat_sol}.

    Now, applying this estimate at $t = \tau$, and using the fact that $\inf_{g \in G} \norm{m (\tau) - g.w_{h_0}}_{H^1} \leq \varepsilon$ since $\tau < T$ and by definition of $T$, we get
    \begin{equation*}
        \lambda \abs{\mathcal{E} (0)} e^{\lambda^2 \tau} \leq \varepsilon,
    \end{equation*}
    which yields
    \begin{equation*}
        \tau \leq \frac{1}{\lambda^2} \ln{\Bigl( \frac{\varepsilon}{\lambda \abs{\mathcal{E} (0)}} \Bigr)}.
    \end{equation*}
    As this is true for general $\tau < T$, we obtain that $T$ satisfies the same upper bound, which obviously leads to \eqref{eq:upper_bound_min_time_instability} by continuity of $t \mapsto \inf_{g \in G} \norm{m (t) - g.w_{h_0}}_{H^1}$.
\end{proof}

\section{Numerical simulations} \label{sec:numerical}

In this section, we delve into the numerical approximations of stationary solutions and explore the evolution of solutions to \eqref{eq:llg} through numerical simulations, all conducted using Python. We begin by examining stationary solutions and proceed to investigate the behavior of solutions under small perturbations from these stationary states. These numerical investigations are supplemented with critical observations, related remarks, and comprehensive discussions regarding the general dynamics of \eqref{eq:llg}.

\subsection{Numerical approximations of the stationary solutions}

The stationary solutions are explicitly related to $\theta_{h_0}$ obtained as the solution of an ODE with explicit initial data given by Theorem \ref{th:stat_sol}.
This {function is} approximated numerically using the \textit{ode45} function from the \textit{matplotlib} library and displayed in Figures \ref{fig:stat_sol_h0_pos_plots} and \ref{fig:stat_sol_h0_neg_plots} for several $h_0$.

It is important to point out that direct use of \eqref{eq:ODE_theta_2_v2} is impractical in the case $h_0 \in (-1, 0)$.
This ODE {$\partial_x \theta = f(\theta)$} involves a function {$f$} that {lacks} $\mathscr{C}^1$ regularity, particularly at $\theta = \arccos{(-1 - 2 h_0)}$.
Even more problematic, the constant function $\arccos{(-1 - 2 h_0)}$ is a solution to \eqref{eq:ODE_theta_2_v2}, and it is probably the solution that a numerical approximation would find in this setting.
Thus, the 2nd order ODE \eqref{eq:ODE_theta_v1} will be used when $h_0 \in (-1, 0)$, whereas this problem does not arise in the case $h_0 > 0$, giving the ability to use \eqref{eq:ODE_theta_2_v1}.

\begin{figure}[htp]
\begin{center}
   \includegraphics[scale=1]{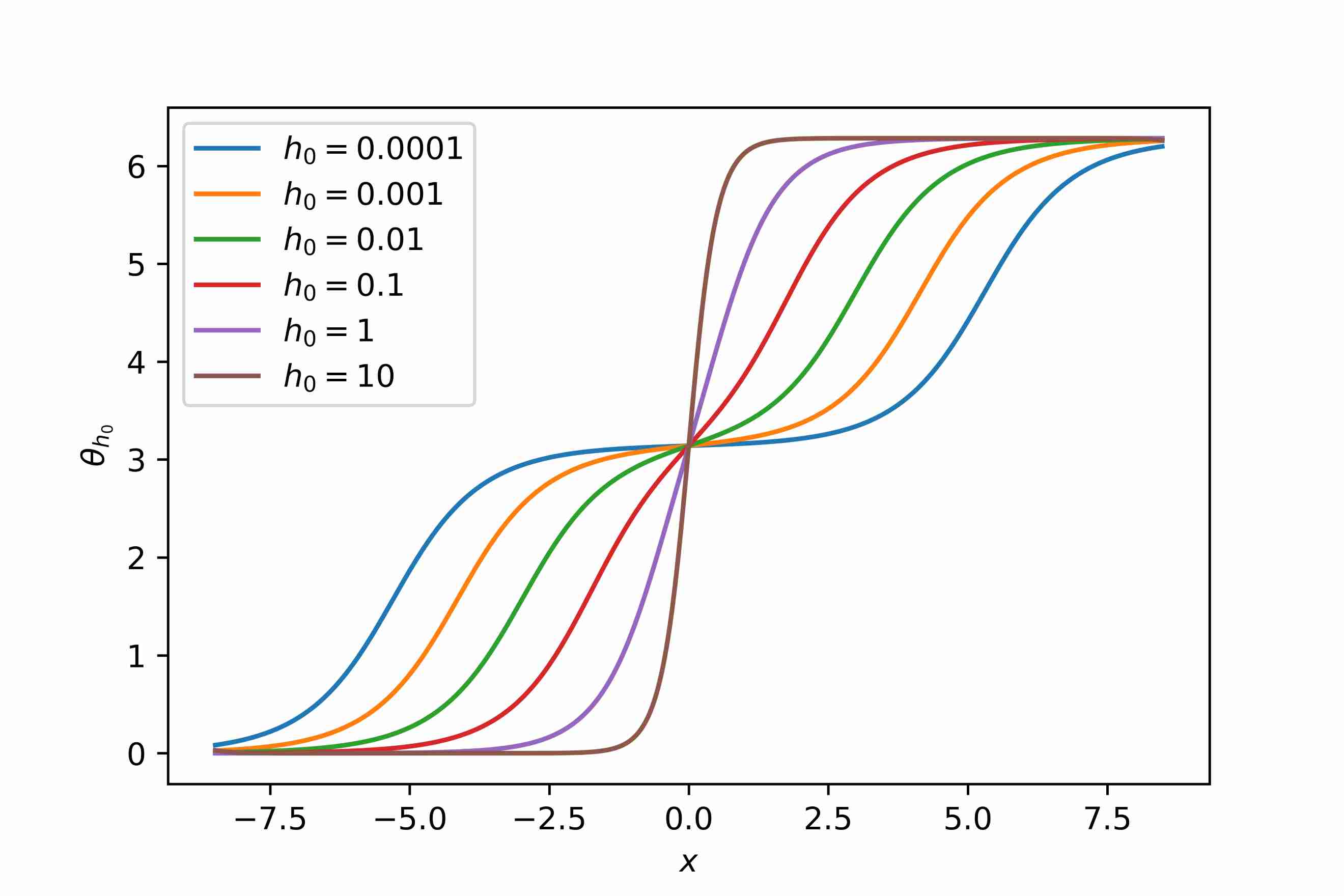}
   \includegraphics[scale=1]{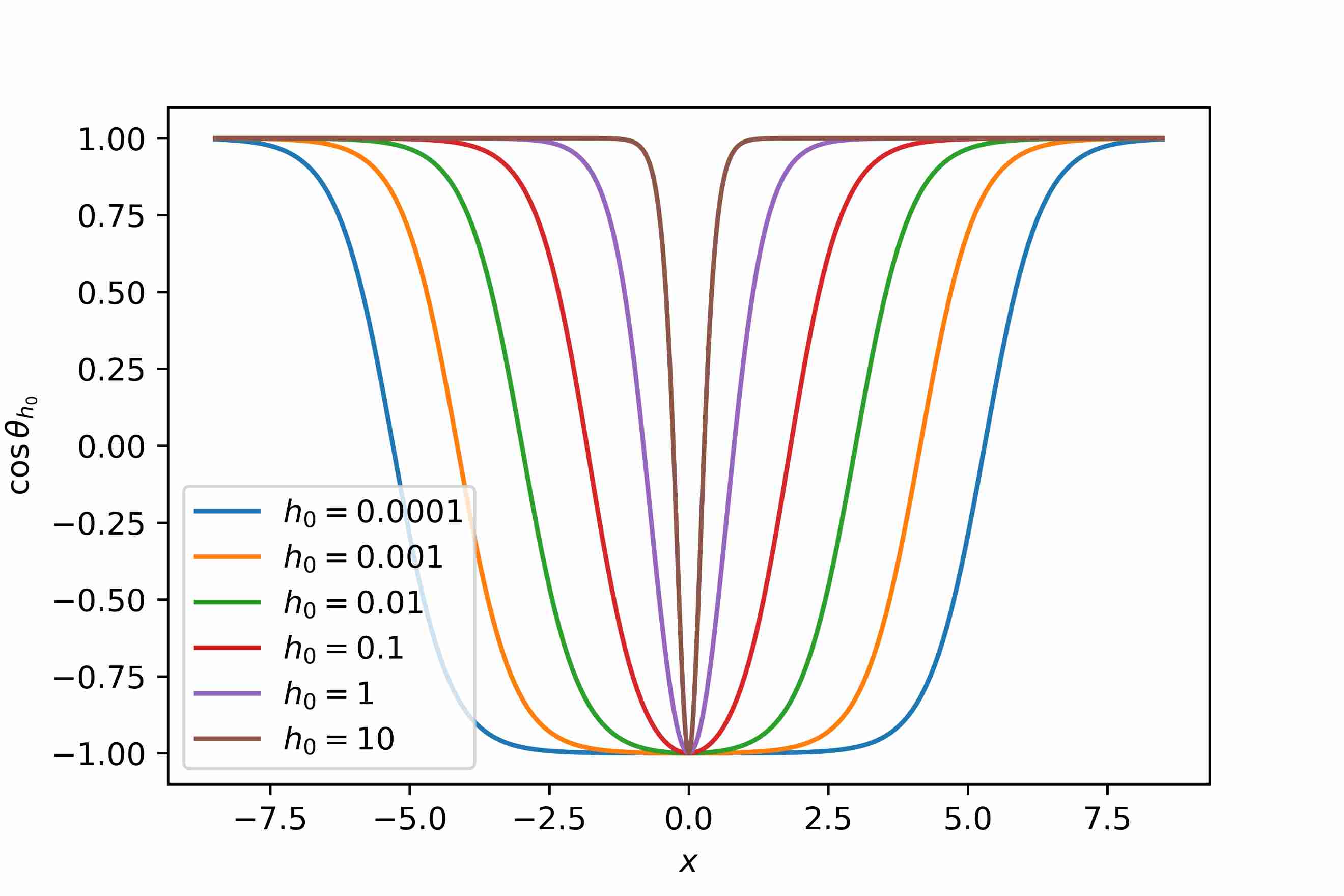}
   \caption{Plot of $\theta_{h_0}$ (on the left) and of $\cos \theta_{h_0}$, which is the first component of $m_{h_0}$ (on the right), for several positive $h_0$.} \label{fig:stat_sol_h0_pos_plots}
\end{center}
\end{figure}

\begin{figure}[htp]
\begin{center}
   \includegraphics[scale=1]{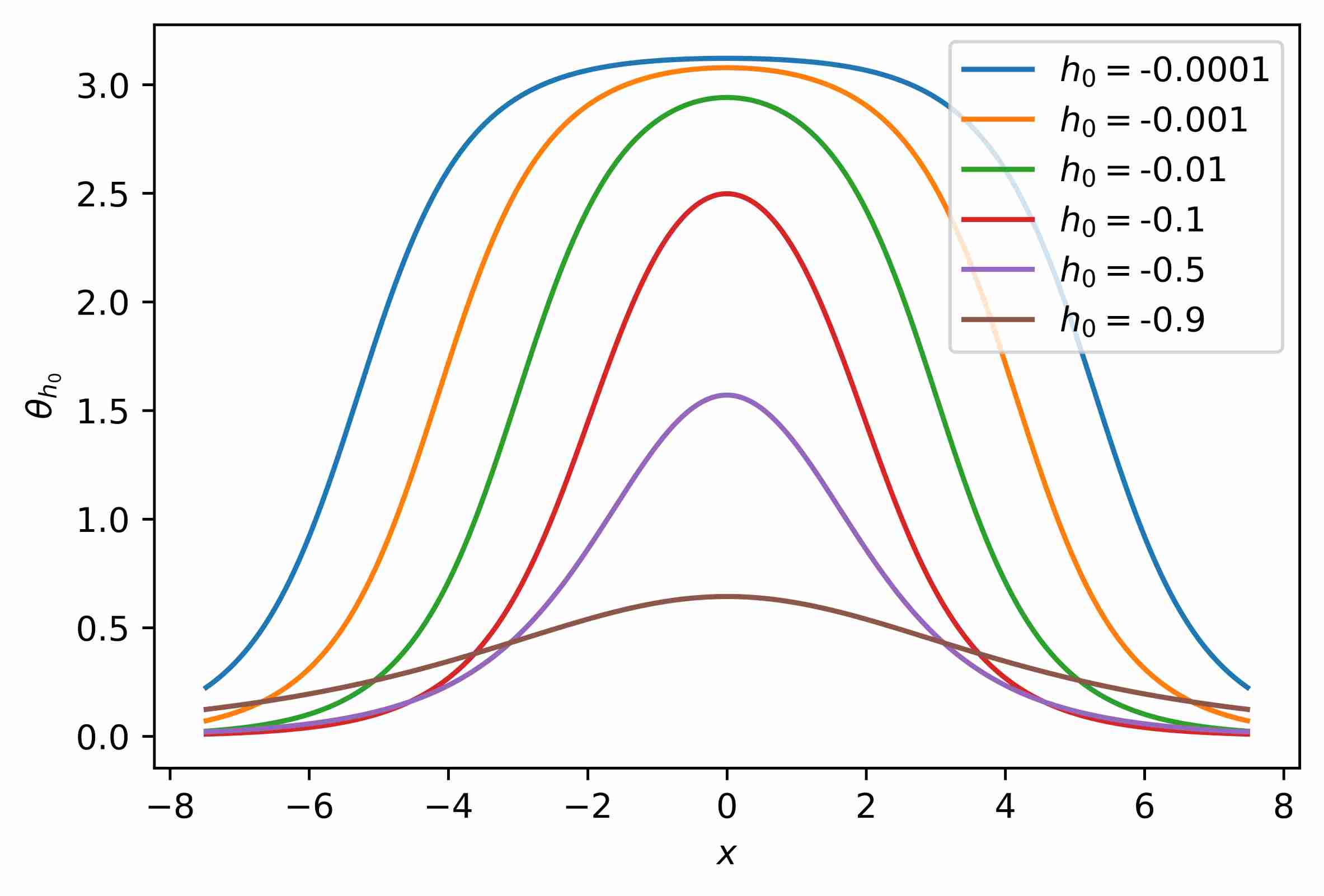}
   \includegraphics[scale=1]{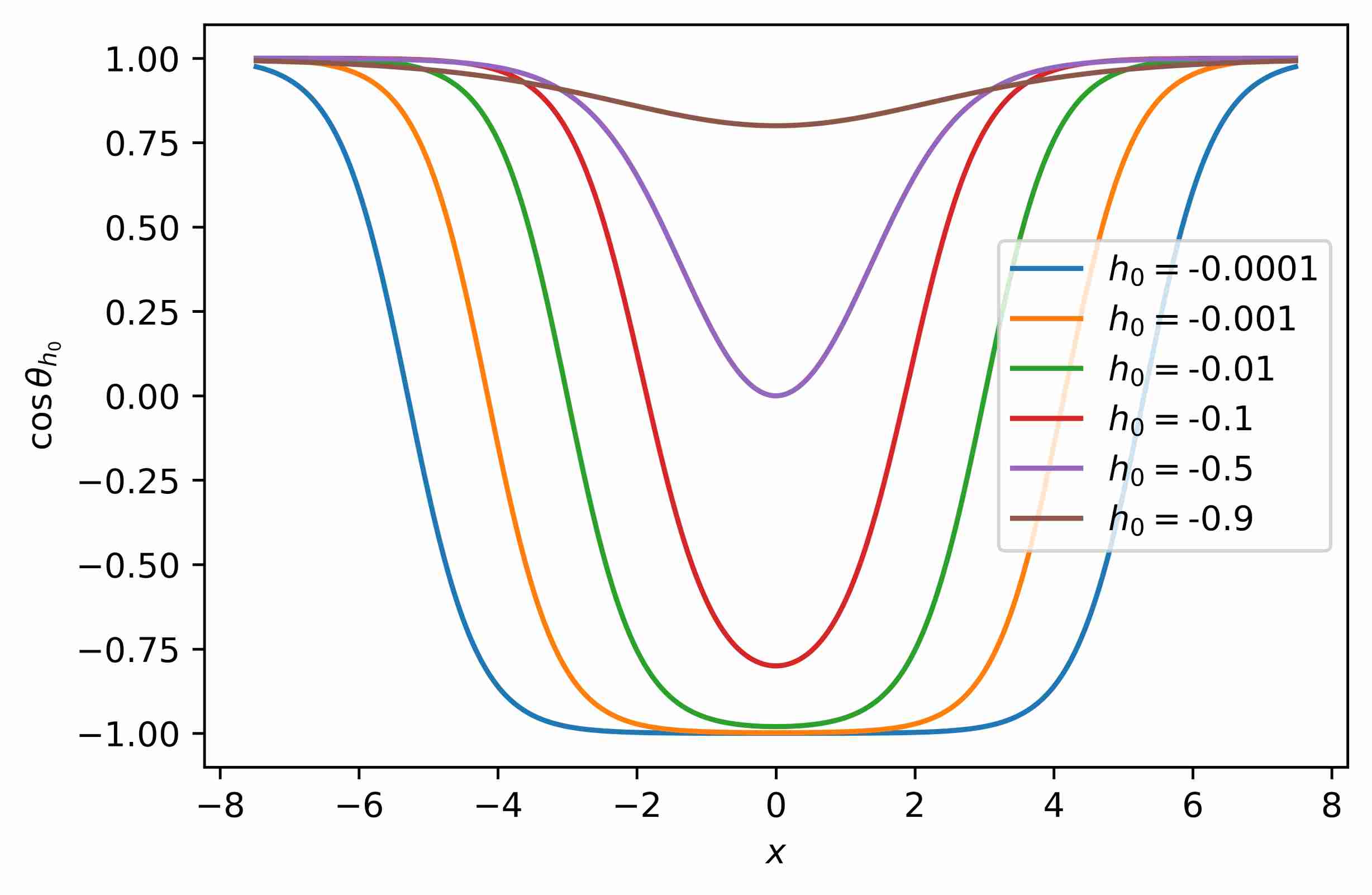}
   \caption{Plot of $\theta_{h_0}$ (on the left) and of $\cos \theta_{h_0}$, which is the first component of $m_{h_0}$ (on the right), for several negative $h_0$.} \label{fig:stat_sol_h0_neg_plots}
\end{center}
\end{figure}

For small $h_0$ (either positive or negative), the stationary solution $w_{h_0}$ looks like a 2-domain wall, as described in \cite{Cote_Ferriere__2DW}, separated from a distance of order roughly $\ln{\abs{h_0}}$.
The distinction between positive and negative $h_0$ lies in the direction of the transition between $- e_1$ and $+ e_1$ of the "domain wall" in $(0, + \infty)$: a rotation around $e_1$ of angle $\pi$ separates the two cases. In the case $h_0 \in (-1, 0)$, this transition takes the same values as the first transition. Conversely, for $h_0 > 0$, it mirrors the first transition with respect to a rotation around $e_1$ of angle $\pi$.

On the contrary, when $h_0 > 0$ is large, $w_{h_0}$ still performs a transition between $e_1$ and $-e_1$ back and forth, but the larger $h_0$, the more abrupt the transition and the shorter the interval of $w_{h_0}$ remaining close to $- e_1$. It is rather intriguing that a solution with such a fast transition with $h_0$ large can be a stationary solution to \eqref{eq:llg}.

There is no stationary solution when $h_0 \leq - 1$. As $h_0$ approaches $- 1^+$, the stationary solution is close to be the constant solution $e_1$. This is most probably related to the fact that the constant solution $e_1$ becomes unstable when $h_0 < 0$ becomes large, with the threshold seemingly occurring at $h_0 = -1$.

\subsection{Numerical simulation}

\subsubsection{Numerical scheme}

The numerical scheme used for the computation of the evolution of \eqref{eq:llg} is a simple explicit finite-difference scheme, on a finite interval $(-L, L)$ with Neumann boundary conditions.
Moreover, between each time step, the solution is renormalized so that $\abs{m} = 1$ anytime and anywhere.
The simulations we will present in the following sections are performed with $L = 15$, space step $dx = 0.2$ and time step $dt = 5.10^{-5}$.
All the plots displayed in the following sections concern $m_1$, as it is the most important component of the magnetization in our problem.

\subsubsection{Instability of the stationary solution}

In Figure \ref{fig:evolution_stat_sol_h0_01}, a numerical simulation of \eqref{eq:llg} with the aforementioned scheme has been performed for $h_0 = 0.1$. The initial data displayed here is the function $m_0$ constructed in Section \ref{subsec:V_delta_not_empty} with Remark \ref{rem:more_explicit_V_delta}, perturbation of the stationary solution, with $\varepsilon_0 = 0.1$. Theorem \ref{th:instability} establishes that for, for sufficiently small $\varepsilon_0$, such initial data subsequently evolves into a distinct state over time. The numerical simulation validates this statement: we see that the solution evolves, deviates from the stationary solution and, in this case, collapses into the constant solution $e_1$.

\begin{figure}[h]
\begin{center}
   \includegraphics[scale=1]{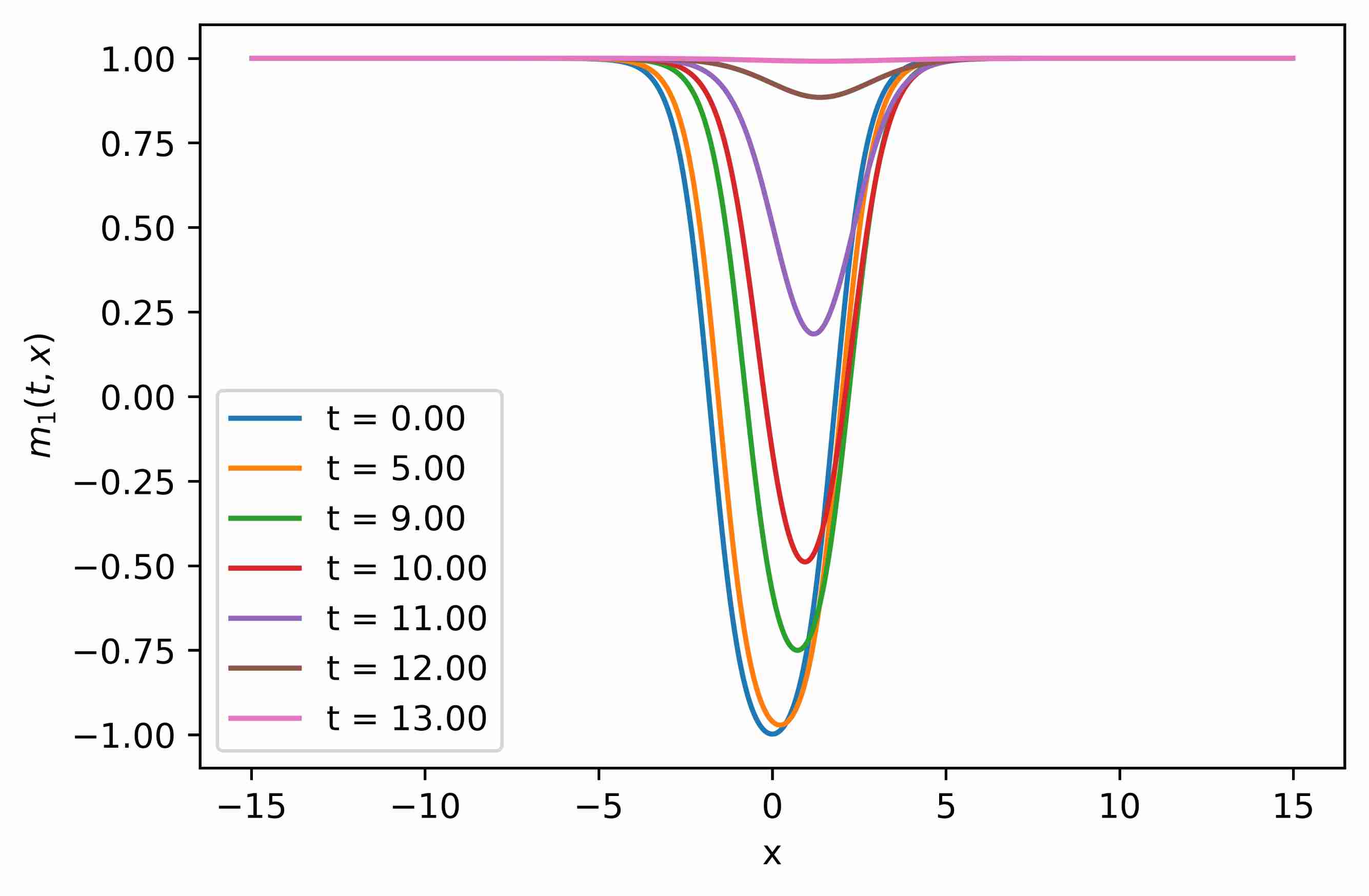}
   \includegraphics[scale=1]{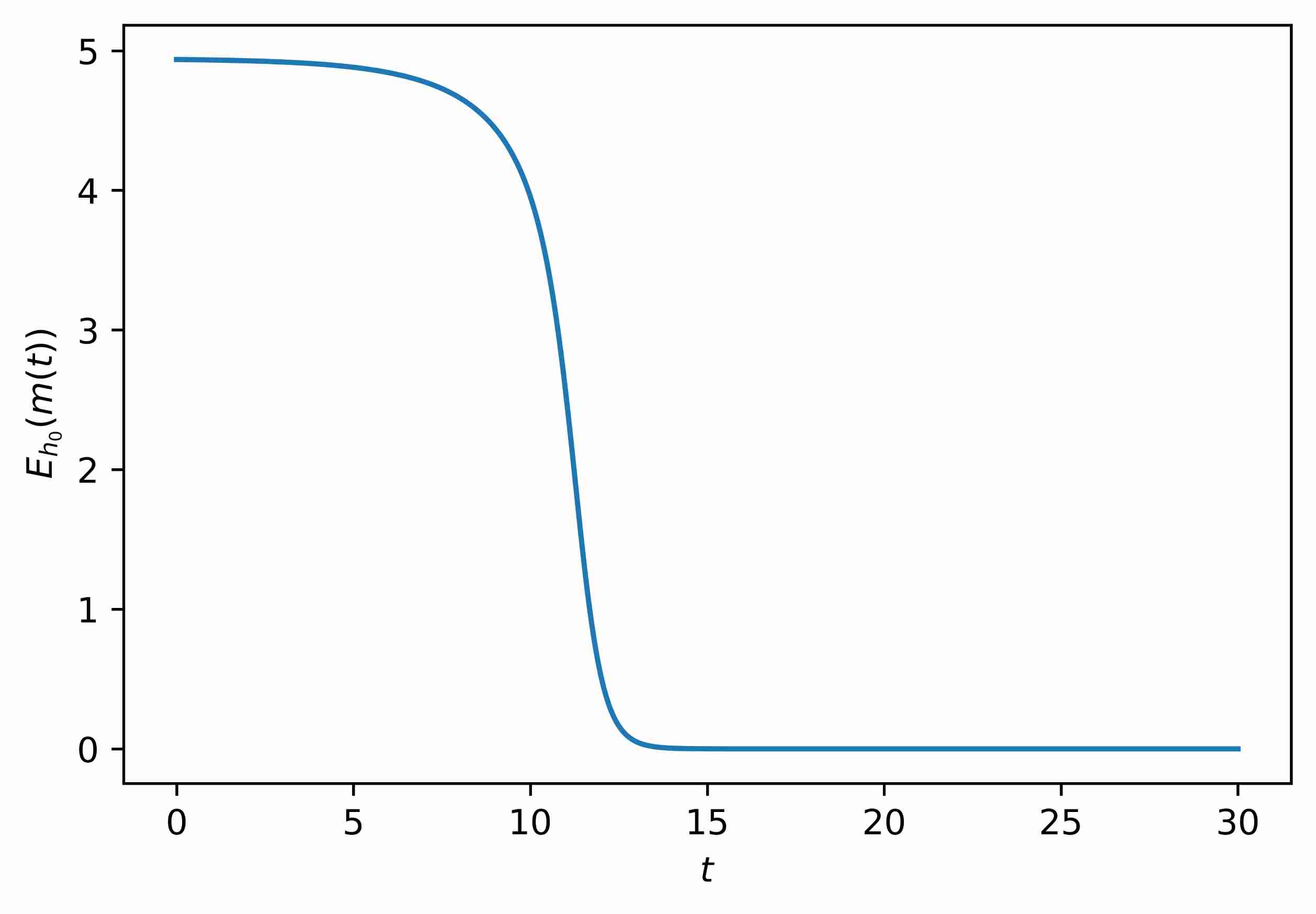}
   \caption{Left: Plot of $m_1 (t)$ for several time between $0$ and $13$. Right: Evolution of the total energy $E_{h_0} (m (t))$. The initial data is a perturbation of the stationary solution $w_{h_0}$ with $h_0 = 0.1$, as depicted in Section \ref{subsec:V_delta_not_empty}.} \label{fig:evolution_stat_sol_h0_01}
\end{center}
\end{figure}

We want to point out that this collapse is not symmetric, because the initial data is not symmetric anymore. While collapsing, the transition of the left-hand side moves to the right.
If we take $\varepsilon_0 = -0.1$, the evolution is symmetric with respect to the case $\varepsilon_0 = 0.1$: there is still collapse, but now the transition of the right-hand side moves to the left.
Last, the (discrete) total energy is indeed decreasing, as expected. More precisely, at first, $E_{h_0}$ is exponentially decreasing. After some time, this decrease starts being smaller and smaller, and converges exponentially to $0$ when $t \rightarrow \infty$. This corresponds to the fact that $E_{h_0} (m) \geq 0$ in the case $h_0 > 0$ and the minimum $0$ is reached at $m \equiv e_1$, which is indeed the limit of the magnetization as depicted in Figure \ref{fig:evolution_stat_sol_h0_01}.

In the case $h_0 = -0.1$, the situation is more involved and even more interesting at this point.
The initial data is again the function $m_0$ constructed in Section \ref{subsec:V_delta_not_empty} with Remark \ref{rem:more_explicit_V_delta}, with $\varepsilon_0 = \pm 0.1$.
However, the behavior of the solution is quite different depending on the sign of $\varepsilon$.
When $\varepsilon = -0.1$, the situation is rather similar to the previous case, with a collapse onto the constant solution $e_1$, although it is more symmetric now (Figure \ref{fig:evolution_stat_sol_h0_neg_01_neg_eps}).

\begin{figure}[h]
\begin{center}
   \includegraphics[scale=1]{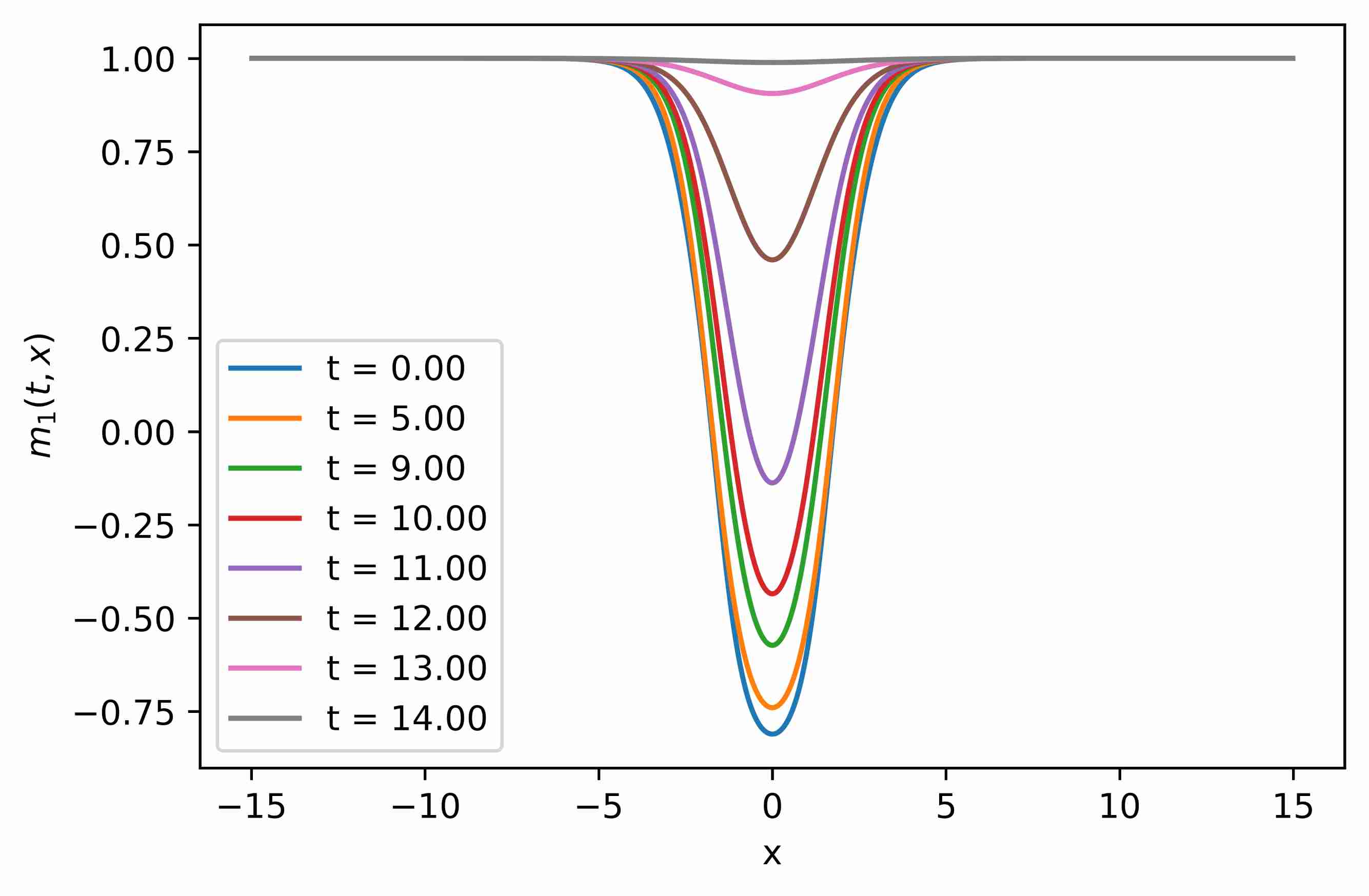}
   \includegraphics[scale=1]{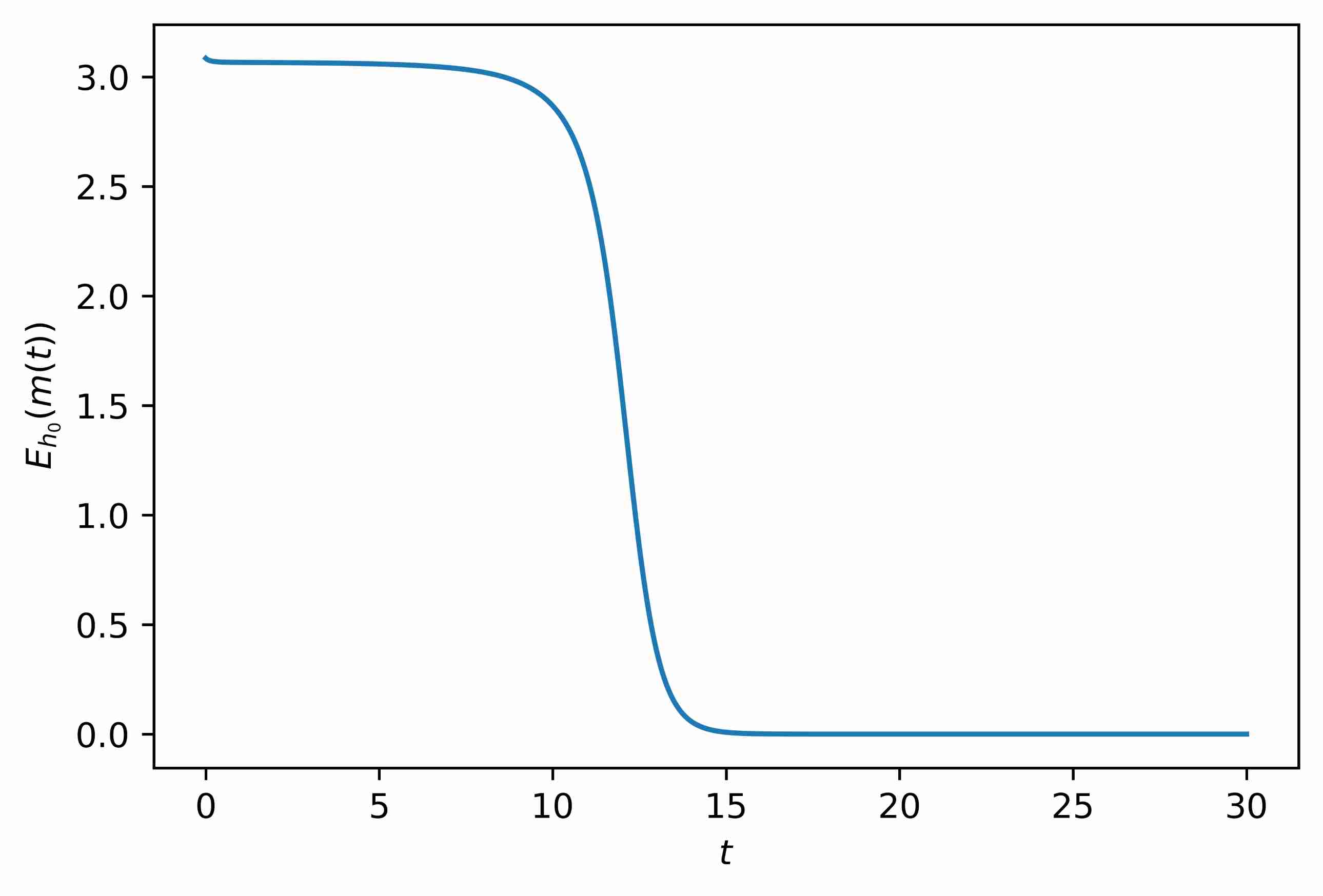}
   \caption{Left: Plot of $m_1 (t)$ for several time between $0$ and $13$. Right: Evolution of the total energy $E_{h_0} (m (t))$. The initial data is a perturbation of the stationary solution $w_{h_0}$ with $h_0 = -0.1$, as depicted in Section \ref{subsec:V_delta_not_empty} with $\varepsilon = - 0.1$.} \label{fig:evolution_stat_sol_h0_neg_01_neg_eps}
\end{center}
\end{figure}

On the other hand, when $\varepsilon = 0.1$ {(Figure \ref{fig:evolution_stat_sol_h0_neg_01})}, the evolution is rather different. Indeed, instead of collapsing, the structure evolves into a 2-domain wall structure, delimiting a magnetic domain of magnetization $- e_1$. Moreover, the domain walls seem to go away to infinity at a linear rate. This is also confirmed by the linear decreasing rate of the total energy, which shows that the area of magnetization $- e_1$ is growing linearly.

\begin{figure}[h]
\begin{center}
   \includegraphics[scale=1]{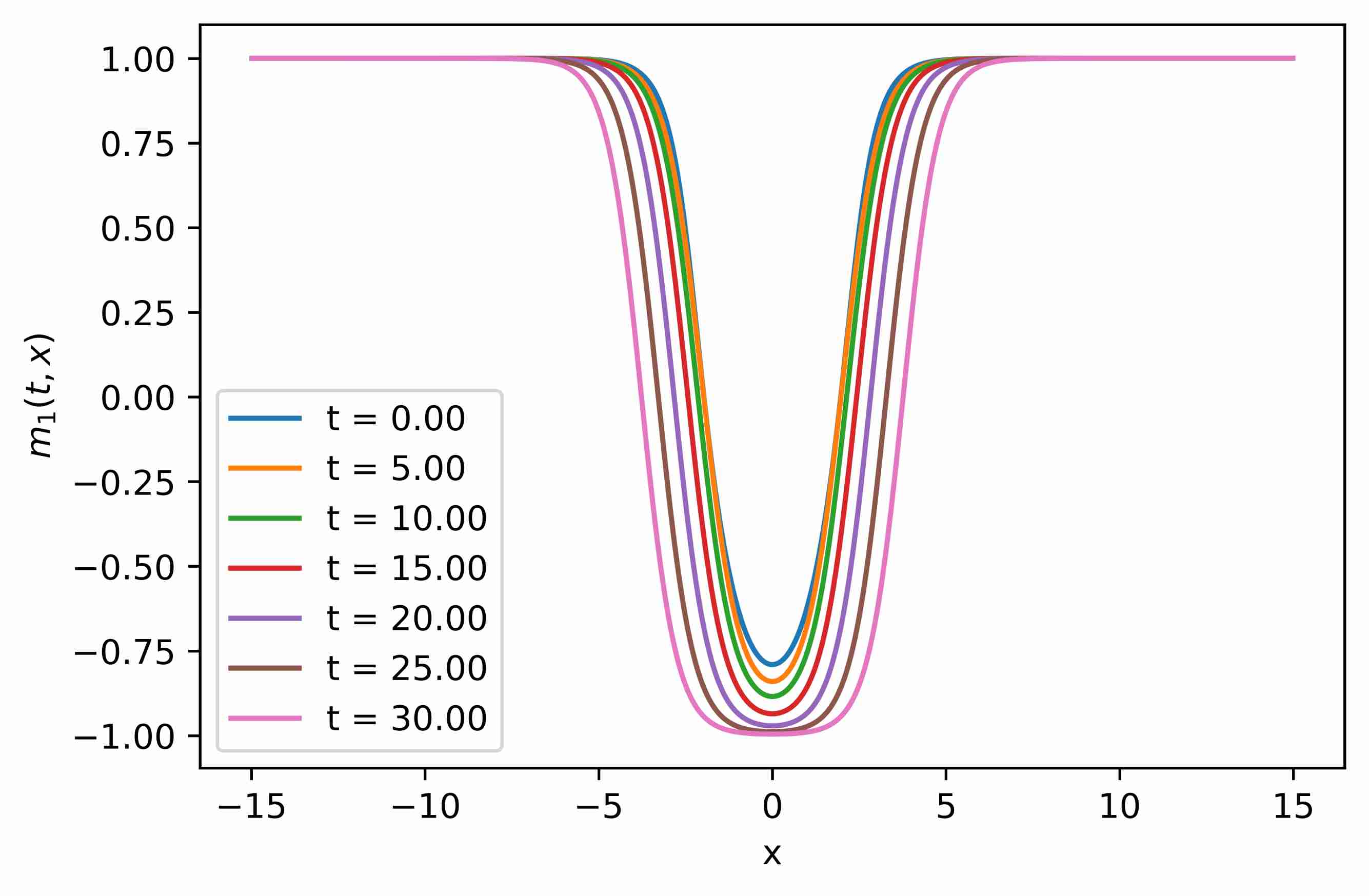}
   \includegraphics[scale=1]{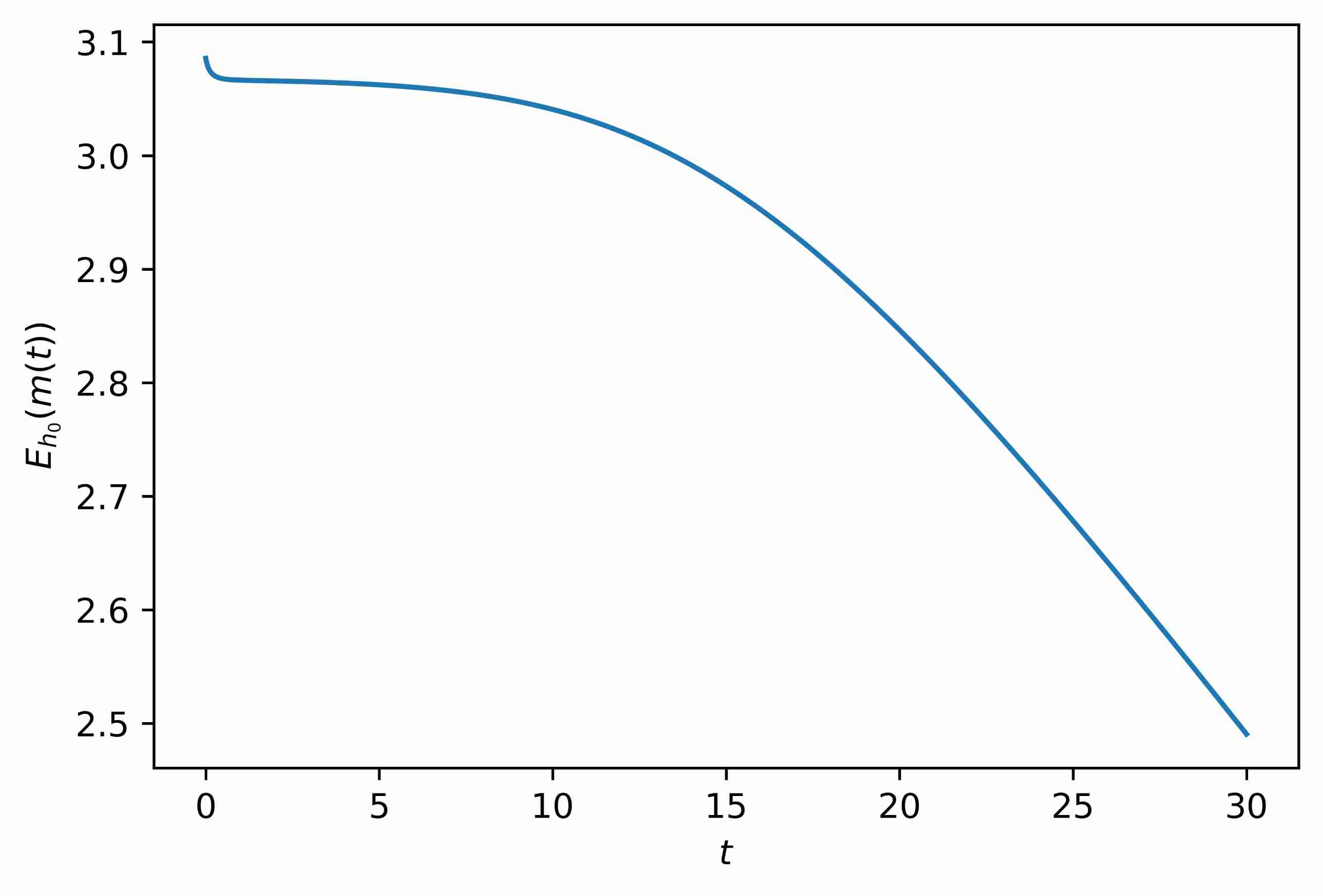}
   \caption{Left: Plot of $m_1 (t)$ for several time between $0$ and $13$. Right: Evolution of the total energy $E_{h_0} (m (t))$. The initial data is a perturbation of the stationary solution $w_{h_0}$ with $h_0 = -0.1$, as depicted in Section \ref{subsec:V_delta_not_empty} with $\varepsilon = 0.1$.} \label{fig:evolution_stat_sol_h0_neg_01}
\end{center}
\end{figure}

This linear evolution of a 2-domain wall confirms the result of \cite[Theorem~1.2]{Cote_Ferriere__2DW}. It also give some information about the assumptions regarding the initial distance between the two domain walls.
The minimal distance seems to be related to this stationary solutions, as the sign of $\varepsilon$ gives a different behavior of the solution (evolution to a 2-domain wall or collapsing).
This minimal distance thus seem to be of order $\ln{\abs{h_0}}$ when $h_0 \rightarrow 0^-$.
This conjecture might give some insights on the internal interactions between domain walls, especially their intensity, but these details are behind the scope of this paper.

\subsubsection{Evolution without external magnetic field}

We also look at the evolution of the solution whose initial data is a stationary solution $w_{h_0}$ for some $h_0 > 0$ or $h_0 \in (-1, 0)$, but with no external magnetic field in \eqref{eq:llg} (i.e. $H_{ext} = 0$).

In the case $h_0 = -0.1$ (figure \ref{fig:evolution_stat_without_ext_mag_sol_h0_neg_01}), the structure collapses into the constant solution $e_1$.
This behavior was expected: the external magnetic field $h_0 e_1$, maintaining the structure when triggered in \eqref{eq:llg}, is "pushing" it towards the magnetization $- e_1$ since $h_0 < 0$, countering the internal forces of the structure which push it towards $e_1$.
But when this external magnetic field is cut off, there is nothing which counters these internal interactions anymore.
This was mostly expected when $h_0$ is close to $- 1^+$, as the stationary solution was already close to $e_1$.
Other simulations with smaller $h_0$ shows the same behavior, although the collapse appear at larger time on which we cannot assure the validity of our simple numerical scheme.

\begin{figure}[h]
\begin{center}
   \includegraphics[scale=1]{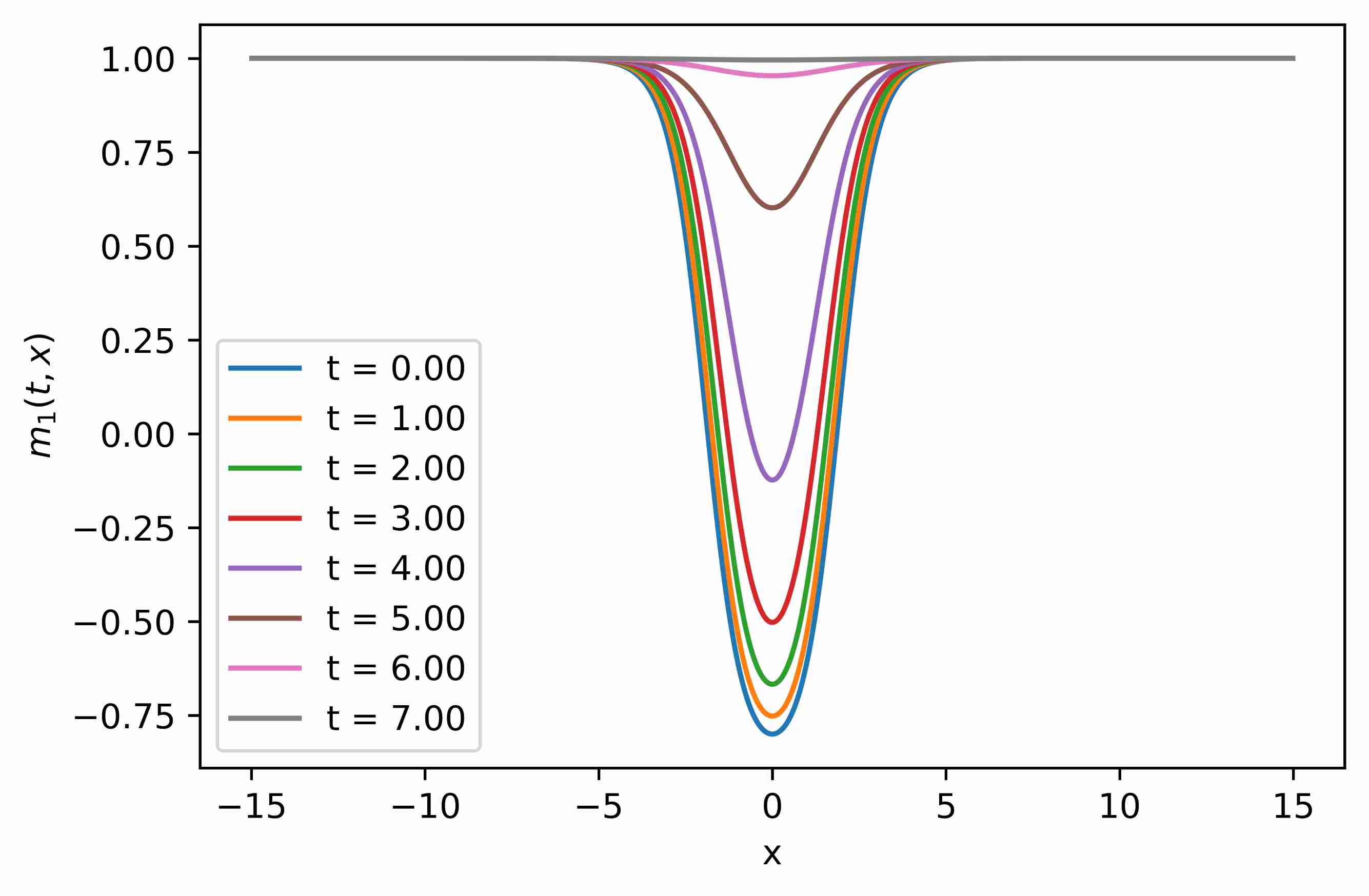}
   \includegraphics[scale=1]{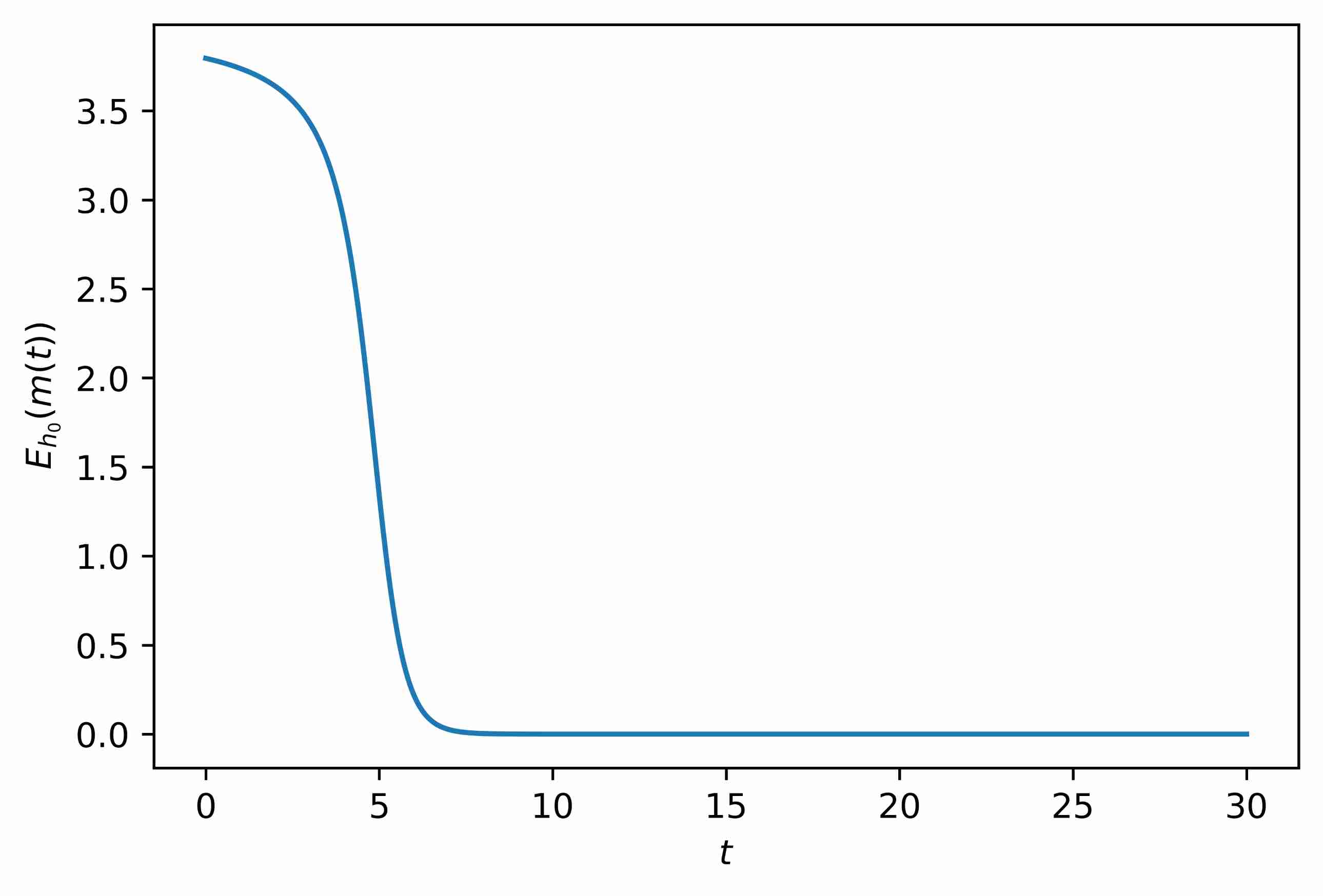}
   \caption{Left: Plot of the first component $m_1 (t)$ of $m(t)$, solution to \eqref{eq:llg} with $H_{ext} = 0$, for several time between $0$ and $7$. Right: Evolution of the energy $E (m (t))$. The initial data is the stationary solution $w_{h_0}$ with $h_0 = -0.1$.} \label{fig:evolution_stat_without_ext_mag_sol_h0_neg_01}
\end{center}
\end{figure}

\begin{figure}[h]
\begin{center}
   \includegraphics[scale=1]{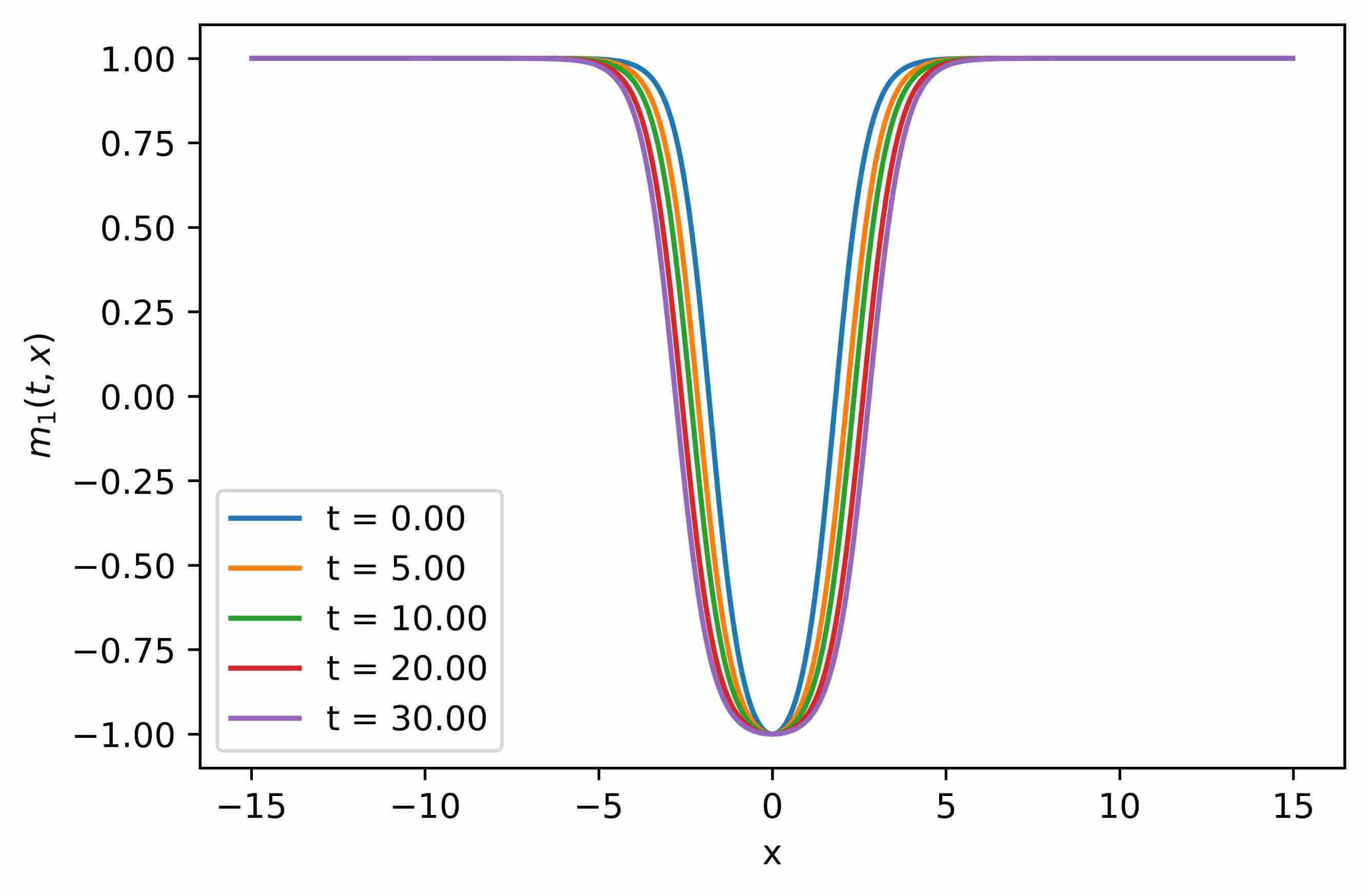}
   \includegraphics[scale=1]{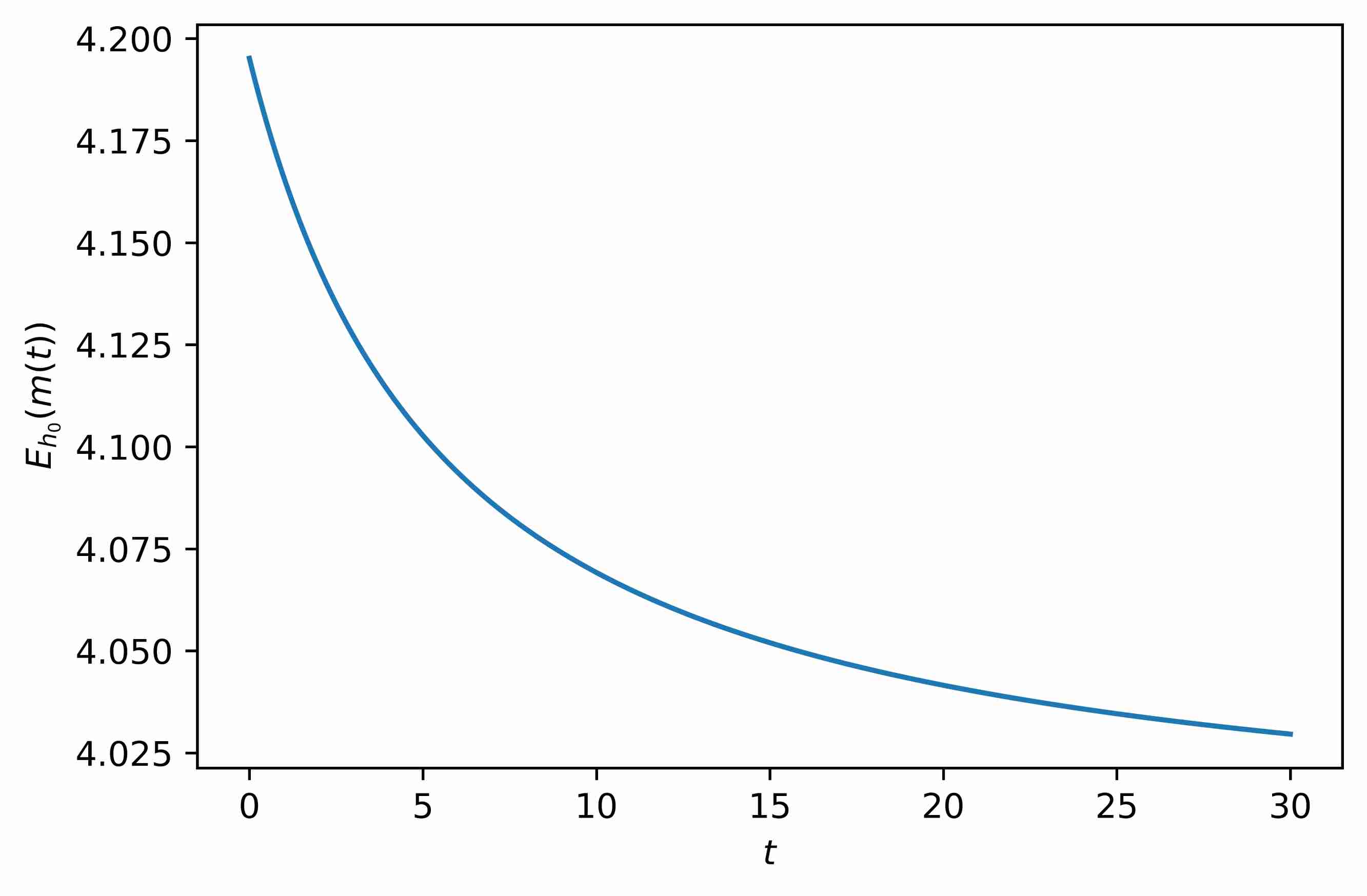}
   \caption{Left: Plot of the first component $m_1 (t)$ of $m(t)$, solution to \eqref{eq:llg} with $H_{ext} = 0$, for several time between $0$ and $30$. Right: Evolution of the energy $E (m (t))$. The initial data is the stationary solution $w_{h_0}$ with $h_0 = 0.1$.} \label{fig:evolution_stat_without_ext_mag_sol_h0_01}
\end{center}
\end{figure}

The case $h_0 > 0$ is more interesting. The external magnetic field $H_{ext}$ is now pushing towards $e_1$, and therefore one gets nontrivial dynamics when $t \rightarrow \infty$ if we cut off this external magnetic field.
When $h_0$ is small, so that $m_{h_0}$ looks like a 2-domain wall, the two domain walls might get closer one from each other.
This is what happens if $h_0 < 0$, as we saw previously.
On the contrary, if $h_0 > 0$, it looks like the internal interactions of this structure push them away from each other (see Figure \ref{fig:evolution_stat_without_ext_mag_sol_h0_01}).

When $h_0$ is large, the dynamics is also very interesting.
The initial data has a rapid transition from $e_1$ to $- e_1$ and back near $x = 0$, but is almost $- e_1$ everywhere else.
Contrarily to what one might expect, the structure does not collapse like previously.
It even grows, so that a real magnetic domain of magnetization $- e_1$ appear, delimited by two domain walls which move away from each other (see Figure \ref{fig:evolution_stat_without_ext_mag_sol_h0_10} for the simulation with $h_0 = 10$).
The evolution of the energy seems to converge to $4$, which is twice the energy of a domain wall, and is therefore in agreement with the previous discussion.
A small random regular perturbation of this initial data still leads to the same behavior, which {might indicate} that this phenomenon is not caused by symmetry.

\begin{figure}[h]
\begin{center}
   \includegraphics[scale=1]{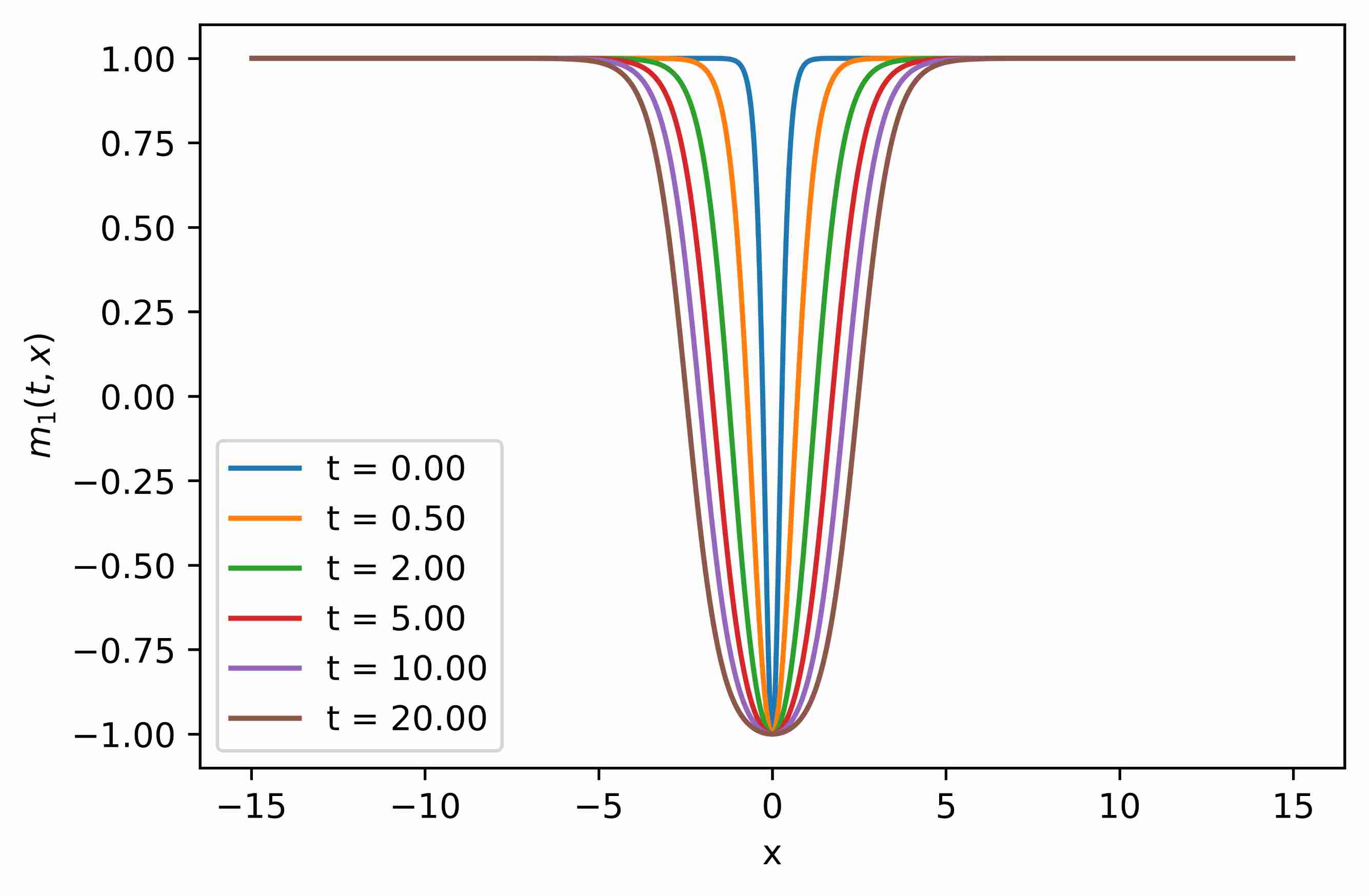}
   \includegraphics[scale=1]{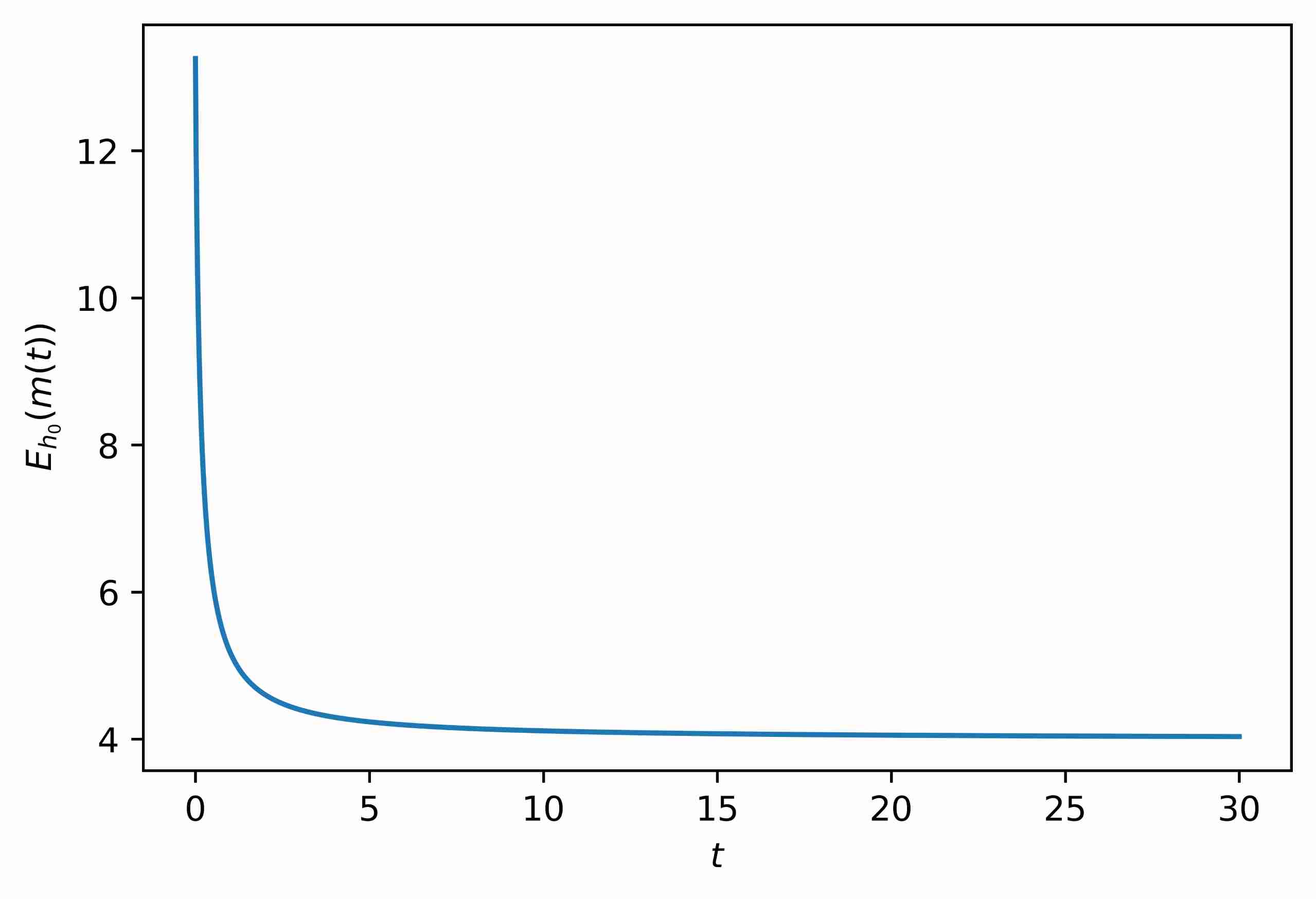}
   \caption{Left: Plot of $m_1 (t)$, solution to \eqref{eq:llg} with $H_{ext} = 0$, for several time between $0$ and $30$. Right: Evolution of the energy $E (m (t))$. The initial data is the stationary solution $w_{h_0}$ with $h_0 = 10$.} \label{fig:evolution_stat_without_ext_mag_sol_h0_10}
\end{center}
\end{figure}

\section{Discussion and open problems}

In this final section, we engage in a brief discussion regarding two intriguing aspects arising from our numerical investigation into the evolution of solutions of the Landau-Lifshitz-Gilbert equation.
First, we explore the nature of interactions between domain walls.
Secondly, we discuss about the spectrum of the operator derived from a direct linearization of the equation, which could suggest further insights of the stability properties of this stationary structure. 
These open problems offer avenues for future research, which might enrich our understanding of magnetization dynamics in nanoscale systems.

\subsection{Attractive and repulsive domain walls ?}

The main question arising from this work {concerns} the interactions between domain walls.
The result of \cite{Cote_Ferriere__2DW} is valid only when the two initial domain walls are far away enough and with a non-negligible external magnetic field which pushes them away from each other.
But it does not say anything about the interactions between these two structures, except that they are negligible enough compared to the external magnetic field in this framework.

On the other hand, the stationary solutions $m_{h_0}$ {look} like 2-domain walls when $h_0$ is small enough.
When $h_0$ is negative, one would expect the two domain walls to move away from each other in view of \cite[Theorem~1.2]{Cote_Ferriere__2DW}.
Since this structure does not move through the flow of \eqref{eq:llg}, this would mean that the forces exerted by the external magnetic field are countered by internal interactions which tend to make them get closer.
This is also shown by the simulation in Figure \ref{fig:evolution_stat_sol_h0_neg_01}.
On the other hand, when $h_0$ is positive, the exact opposite happens: one would expect the external magnetic field to make the two domain walls get closer, but it is probably countered by internal interactions which make them move away from each other.

In conclusion, a domain wall $w_* (x)$ and its opposite $w_* (-x)$ {seem to} have \textit{attractive} interactions, whereas the interactions with the latter being turned by a rotation $R_\pi$ around $e_1$ of angle $\pi$ {seem to} be \textit{repulsive}.
Yet, this interpretation needs to be attenuated. Such considerations are mostly on the positions of the domain walls, but it does not say anything on the evolution of the rotation between the two interfering domain walls.
In particular, for $h_0 > 0$, the analysis performed in this paper shows that the instability of the stationary solution comes from a term $L_2 \rho$, where $\rho$ is strongly related to rotations around $e_1$ ($\nu$ is related to translations, see also Remark \ref{rem:invariance_link}).
Thus, we cannot rule out a possible mechanism which would make the two domain walls to rotate at the same time as they move away from each other at first, which may lead to a new situation where their interactions become attractive.

In the end, these nonlinear dynamics regarding the interactions between domain walls remain an open question, and would require a detailed analysis, going further in the direction of \cite{Cote_Ferriere__2DW}.

\subsection{Spectrum of the linearized operator}

A direct linearization of \eqref{eq:llg} around a stationary solution $w_{h_0}$ is easily available. By decomposing the solution as in Lemma \ref{lem:expand_eta} and using the computations in Lemma \ref{lem:expand_energy}, we obtain that $(\eta, \rho)$ must satisfy
\begin{equation*}
    \frac{\diff}{\diff t}
    \begin{pmatrix}
        \eta \\ \rho
    \end{pmatrix}
    = - L
    \begin{pmatrix}
        \eta \\ \rho
    \end{pmatrix}
    + O_2^2(\eta),
\end{equation*}
where the operator $L$ is given by
\begin{equation*}
    L = 
    \begin{pmatrix}
        \alpha L_1 & -L_2 \\
        L_1 & \alpha L_2
    \end{pmatrix}
\end{equation*}
This operator is not self-adjoint. Yet, it is a relatively compact perturbation of the operators
\begin{equation*}
    \begin{pmatrix}
        \alpha L_1 & -L_1 \\
        L_1 & \alpha L_1
    \end{pmatrix}
    = L_1 M
    \qquad \text{and} \qquad
    \begin{pmatrix}
        \alpha L_2 & -L_2 \\
        L_2 & \alpha L_2
    \end{pmatrix}
    = L_2 M,
\end{equation*}
where $M$ is a matrix with constant coefficients
\begin{equation*}
    M \coloneqq
    \begin{pmatrix}
        \alpha & -1 \\
        1 & \alpha
    \end{pmatrix}.
\end{equation*}
This matrix has two complex eigenvalues:
\begin{itemize}
    \item $\alpha + i$, with related eigenvector $(1, i)$,
    \item $\alpha - i$, with related eigenvector $(1, -i)$.
\end{itemize}
We know that the potential in both $L_1$ and $L_2$ has a limit at $\pm \infty$, which is $1 + h_0$.
Thus, the essential spectrum of $L_1$ and $L_2$ is given by $\sigma_{ess} (L_1) = \sigma_{ess} (L_2) = [1 + h_0, \infty)$, and we obtain straightforwardly the essential spectrum of $L_1 M$ and $L_2 M$, which in turn is also the essential spectrum of $L$: $(\alpha + i) . [1 + h_0, \infty) \cup (\alpha - i) . [1 + h_0, \infty)$.
A similar analysis leads to the same result for the adjoint of this operator.

In particular, this essential spectrum is completely embedded in $\{ z \in \mathbb{C} \, | \, \Re z > 0 \}$. Thus, in order to study the linear stability, one should study the remaining eigenvalues of $L$, or rather of $L^\top$, and in particular the eigenvalues embedded in $\{ z \in \mathbb{C} \, | \, \Re z \leq 0 \}$.
The kernel of $L^\top$ can be easily computed thanks to the kernel of $L_1$ and $L_2$ given in Proposition \ref{prop:kernel_schro_op}:
\begin{equation*}
    \operatorname{ker} L^\top = \mathbb{C}
    \begin{pmatrix}
        \alpha \\ 1
    \end{pmatrix}
    \partial_x \theta_{h_0}
    + \mathbb{C}
    \begin{pmatrix}
        -1 \\ \alpha
    \end{pmatrix}
    \sin \theta_{h_0}.
\end{equation*}
These two functions are related to the invariance by translation and rotation around $e_1$, as already explained previously, and can therefore be put aside by some modulation.
In view of the results found above on the dynamics in the nonlinear setting, one would expect at least another eigenvalue with negative real part, and more probably two ($L$ is real, so any eigenvalue $\lambda \in \mathbb{C} \setminus \mathbb{R}$ would give rise to a second eigenvalue $\overline{\lambda}$).
However, since the operator $L$ is not self-adjoint, this analysis is much more difficult and it is still an open problem to the best of the author's knowledge.
A positive answer to this question would still give a more precise hint of the linear and nonlinear dynamics, as it would roughly say that most solutions to \eqref{eq:llg} close to a stationary solution would evolve into a different state, except maybe on a manifold of dimension $2$. This would be better than the result in Theorem \ref{th:instability}, which states this behaviour only on a subset in form of a cone near the stationary solution.

\section*{Acknowledgements}

The author acknowledges partial support by the ANR project MOSICOF ANR-21-CE40-0004.
The author expresses gratitude to Raphaël Côte for pointing out the existence of these structures and for engaging in several insightful discussions. While the subject looked poor on first sight, it actually revealed several very interesting behaviors of the dynamics of the Landau-Lifshitz-Gilbert equation on a nanowire, completing our previous study.
The author extends appreciation to Gilles Carbou for generously sharing his code, which served as the foundation of the code of the present article.
Special thanks are also extended to Quentin Chauleur for his meticulous review of the initial version of this article.

\appendix

\section{Rayleigh quotients and eigenfunctions} \label{sec:app_rayleigh}

In this appendix, we recall a particular case of the Rayleigh quotients, where the supremum over linear subspace of codimension fixed can be explicited thanks to the eigenfunctions of the previous eigenvalues.

\begin{lem} \label{lem:link_a_rayleigh_quotient}
    Let $\lambda_1 < \lambda_2$ be the first two eigenvalues of a self-adjoint bounded-from-below operator $(\operatorname{Dom} (L), L)$ on an Hilbert space $H$, associated with the quadratic form $Q (u) \coloneqq \langle L u, u \rangle_H$. Assume that these eigenvalues are simple with respecting normalized eigenvectors $\psi_1, \psi_2 \in \operatorname{Dom} (L) \subset \operatorname{Dom} (Q)$ and that the essential spectrum is included in $[\lambda_\infty, \infty)$ where $\lambda_\infty > \lambda_2$. Then
    \begin{align*}
        \inf_{u \in \operatorname{span} (\psi_1, \psi_2)^\perp \cap \operatorname{Dom} (Q) \setminus \{ 0 \}} \frac{\langle L u, u \rangle_H}{\norm{u}_{H}^2} \eqqcolon a = \mu_3 (L) \coloneqq \sup_{\phi_1, \phi_2} \inf_{u \in \operatorname{span} (\phi_1, \phi_2)^\perp \cap \operatorname{Dom} (Q) \setminus \{ 0 \}} \frac{\langle L u, u \rangle_H}{\norm{u}_H^2},
    \end{align*}
    where $\mu_3 (L)$ is the third Rayleigh quotient of $L$.
\end{lem}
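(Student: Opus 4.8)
The plan is to prove the claimed equality through the two inequalities $a \le \mu_3(L)$ and $\mu_3(L) \le a$, the second being the substantive one. The lower bound is immediate: since $\psi_1,\psi_2 \in \operatorname{Dom}(Q)$ are admissible in the supremum defining $\mu_3(L)$, the choice $\phi_1=\psi_1$, $\phi_2=\psi_2$ makes the inner infimum equal to $a$ by definition, so $\mu_3(L)\ge a$. For the reverse inequality, the right object is the spectral resolution of $L$. First I would record the structural facts about $a$: writing $a=\inf\bigl(\sigma(L)\setminus\{\lambda_1,\lambda_2\}\bigr)$, one checks that $a\in\sigma(L)$ (the infimum of a subset of the closed set $\sigma(L)$ lies in $\sigma(L)$) and that $a>\lambda_2$. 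The latter uses the hypotheses decisively: because $\lambda_1<\lambda_2$ are simple and the essential spectrum is contained in $[\lambda_\infty,\infty)$ with $\lambda_\infty>\lambda_2$, the spectrum below $\lambda_\infty$ is purely discrete, and $\lambda_1,\lambda_2$ being the first two simple eigenvalues forces the next spectral point $a$ to satisfy $a>\lambda_2$.

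Next, fix $\varepsilon>0$ and let $P_\varepsilon$ denote the spectral projection of $L$ associated to the interval $(-\infty,a+\varepsilon]$, with range $W_\varepsilon \coloneqq \operatorname{Ran} P_\varepsilon$. I would argue that $\dim W_\varepsilon \ge 3$: the range contains $\psi_1$ and $\psi_2$, and it also contains $\operatorname{Ran}\bigl(\mathbf 1_{[a,a+\varepsilon]}(L)\bigr)$, which is nonzero since $a\in\sigma(L)$, and which is orthogonal to both $\psi_i$ as it corresponds to a disjoint part of the spectrum. Moreover every $u\in W_\varepsilon$ has bounded spectral support, hence lies in $\operatorname{Dom}(L)\subseteq\operatorname{Dom}(Q)$, and the spectral theorem gives $\langle Lu,u\rangle_H \le (a+\varepsilon)\,\norm{u}_H^2$.

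Given arbitrary $\phi_1,\phi_2\in H$, the subspace $W_\varepsilon\cap\operatorname{span}(\phi_1,\phi_2)^\perp$ is cut out of $W_\varepsilon$ by two linear conditions, so its dimension is at least $\dim W_\varepsilon - 2 \ge 1$ and it is nontrivial. Any unit vector $u$ in it is admissible in the inner infimum and satisfies $\langle Lu,u\rangle_H\le a+\varepsilon$; therefore that infimum is $\le a+\varepsilon$ for \emph{every} $\phi_1,\phi_2$, and taking the supremum yields $\mu_3(L)\le a+\varepsilon$. Letting $\varepsilon\downarrow 0$ gives $\mu_3(L)\le a$, which together with the first inequality proves $a=\mu_3(L)$.

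The main obstacle is not conceptual but a matter of careful bookkeeping at exactly the two points that make the dimension count $\dim W_\varepsilon\ge 3$ valid, namely verifying that $a$ is genuinely attained in $\sigma(L)$ (so that $\mathbf 1_{[a,a+\varepsilon]}(L)\neq 0$) and that $a>\lambda_2$ (so that $\psi_1,\psi_2$ and the band contribute three independent directions). The advantage of phrasing the argument through the spectral projection is that it treats uniformly the two underlying situations that the main text distinguishes for $L_1$ and $L_2$: whether $a=\lambda_3$ is a third discrete eigenvalue strictly below $\lambda_\infty$, or whether $a=\lambda_\infty$ is the bottom of the essential spectrum, in which case $\operatorname{Ran}\bigl(\mathbf 1_{[a,a+\varepsilon]}(L)\bigr)$ is even infinite-dimensional. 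In both cases the same projection argument applies verbatim.
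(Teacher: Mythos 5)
Your proof is correct, and it takes a genuinely different route from the paper's. The paper stays at the level of quadratic forms and argues by contradiction: assuming $a<\mu_3(L)$, it fixes $\lambda\in(a,\mu_3(L))$ with $\lambda\ge\lambda_2$ (justified via the classical identity $\mu_2(L)=\lambda_2$, quoted from the literature), picks a near-minimizer $\psi_3\perp\psi_1,\psi_2$ with $Q(\psi_3)<\lambda\norm{\psi_3}_H^2$, and intersects the three-dimensional space $\operatorname{span}(\psi_1,\psi_2,\psi_3)$ with $\operatorname{span}(\phi_1,\phi_2)^\perp$; expanding $Q$ on that intersection (the cross terms vanish because $\psi_1,\psi_2$ are eigenvectors orthogonal to $\psi_3$) gives $\mu_3(L)\le\lambda<\mu_3(L)$, a contradiction. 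You instead build the trial subspace with the spectral theorem: $W_\varepsilon=\operatorname{Ran}\mathbf{1}_{(-\infty,a+\varepsilon]}(L)$ carries the bound $Q\le(a+\varepsilon)\norm{\cdot}_H^2$ for free, and the three independent directions come from $\psi_1,\psi_2$ plus the band projection $\mathbf{1}_{[a,a+\varepsilon]}(L)$. The combinatorial core --- a subspace of dimension at least $3$ meets every subspace of codimension $2$ nontrivially --- is identical in the two arguments; only the construction of the low-energy subspace differs. Your version is direct rather than by contradiction, dispenses with the input $\mu_2(L)=\lambda_2$, and, as you note, treats uniformly the two situations ($a$ a third discrete eigenvalue, or $a=\lambda_\infty$ the bottom of the essential spectrum) that matter for the operators $L_1,L_2$ in the body of the paper. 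The price is that you assert rather than prove two spectral facts: the identification $a=\inf\bigl(\sigma(L)\setminus\{\lambda_1,\lambda_2\}\bigr)$, which is the variational characterization of the bottom of the spectrum of $L$ restricted to the reducing subspace $\{\psi_1,\psi_2\}^\perp$ (self-adjoint there since $\psi_1,\psi_2$ are eigenvectors, with spectrum exactly $\sigma(L)\setminus\{\lambda_1,\lambda_2\}$ because $\lambda_1,\lambda_2$ are simple and isolated); and the nonvanishing of the one-sided band projection, which requires the observation that $(\lambda_2,a)$ lies in the resolvent set, so that $\mathbf{1}_{[a,a+\varepsilon]}(L)=\mathbf{1}_{(\lambda_2,a+\varepsilon]}(L)\neq 0$. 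Both are standard, so these are presentation gaps rather than mathematical ones; a sentence of justification for each would make the argument complete.
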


\begin{proof}
    By definition of $\mu_3 (L)$, we obviously have $a \leq \mu_3 (L)$.
    By contradiction, assume that $a < \mu_3 (L)$.
    By definition of $a$, there exists a non-trivial vector $\psi_3 \in \operatorname{Dom} (Q) \setminus \{ 0 \}$ such that $\langle \psi_3, \psi_1 \rangle_H = \langle \psi_3, \psi_2 \rangle_H = 0$ and $Q (\psi_3) < \lambda \norm{\psi_3}_H^2$ for any $\lambda \in (a, \mu_3 (L))$ to be fixed later. We can assume that $\norm{\psi_3}_{H} = 1$.
    Let $\mu_2 (L)$ be the second Rayleigh quotient
    \begin{equation*}
        \mu_2 (L) \coloneqq \sup_{\phi_1} \inf_{u \in \operatorname{span} (\phi_1)^\perp \cap \operatorname{Dom} (Q) \setminus \{ 0 \}} \frac{\langle L u, u \rangle_H}{\norm{u}_H^2}.
    \end{equation*}
    It is known that $\mu_2 (L) = \lambda_2$ (see \cite{Cheverry_Raymond_book}). Moreover, since $\lambda_2$ is simple and since the bottom of the essential spectrum is larger than $\lambda_\infty > \lambda_2$, we also know that $\mu_2 (L) < \mu_3 (L)$.
    Then, we can assume furthermore $\lambda \geq \mu_2 (L) = \lambda_2$.

    Now, let $\phi_1, \phi_2 \in H$. We know that $\operatorname{span} (\psi_1, \psi_2, \psi_3)$ is a vector space with dimension $3$ which is included in $\operatorname{Dom} (Q) \subset H$, whereas $\operatorname{span} (\phi_1, \phi_2)^\perp$ is a vector space with codimension (at least) $2$ in $H$.
    Therefore, there exists a non-trivial vector $u \in \operatorname{span} (\psi_1, \psi_2, \psi_3) \cap \operatorname{span} (\phi_1, \phi_2)^\perp$, and this $u$ is obviously in $\operatorname{Dom} (Q)$.
    We can decompose $u$ in the basis $(\psi_1, \psi_2, \psi_3)$:
    \begin{equation*}
        u = \langle u, \psi_1 \rangle_H \psi_1 + \langle u, \psi_2 \rangle_H \psi_2 + \langle u, \psi_3 \rangle_H \psi_3.
    \end{equation*}
    In particular, we can compute $Q (u)$ with this expansion. Indeed, since $\psi_1$ and $\psi_2$ are eigenvectors, and as $(\psi_1, \psi_2, \psi_3)$ are orthogonal to each other in $H$, all the cross terms vanish:
    \begin{align*}
        Q (u) &= \langle u, \psi_1 \rangle_H^2 Q (\psi_1) + \langle u, \psi_2 \rangle_H^2 Q (\psi_2) + \langle u, \psi_3 \rangle_H^2 Q (\psi_3) \\
            &= \underbrace{\lambda_1}_{< \lambda_2 \leq \lambda} \langle u, \psi_1 \rangle_H^2 + \underbrace{\lambda_2}_{\leq \lambda} \langle u, \psi_2 \rangle_H^2 + \langle u, \psi_3 \rangle_H^2 \underbrace{Q (\psi_3)}_{< \lambda} \\
            &\leq \lambda \Bigl( \langle u, \psi_1 \rangle_H^2 + \langle u, \psi_2 \rangle_H^2 + \langle u, \psi_3 \rangle_H^2 \Bigr) = \lambda \norm{u}_H^2.
    \end{align*}
    Therefore, $\inf_{u \in \operatorname{span} (\phi_1, \phi_2)^\perp \cap \operatorname{Dom} (Q) \setminus \{ 0 \}} \frac{\langle L u, u \rangle_H}{\norm{u}_H^2} \leq \lambda$. Since this is true for any $\phi_1, \phi_2 \in H$, we get $\mu_3 (L) \leq \lambda < \mu_3 (L)$, and thus the contradiction.
\end{proof}

\section{Proof of the modulation lemma} \label{sec:proof_mod_lem}

In this section, we prove Lemma \ref{lem:modulation} which decomposes the magnetization with a nice gauge satisfying two orthogonal equalities. In brief, this result is a consequence of the implicit function theorem.
%
%
We start with defining and studying the appropriate function: let
\begin{equation*}
    \mathfrak{F}: 
    \begin{aligned}[t]
        {(L^\infty)^3} \times G &\rightarrow \mathbb{R}^2 \\
        (m, g) &\mapsto 
            \begin{pmatrix}
                \int m \cdot g.\partial_x w_{h_0} \diff x \\
                \int m \cdot (e_1 \wedge g.w_{h_0}) \diff x
            \end{pmatrix}.
        \end{aligned}
\end{equation*}

This integrals are well-defined since both $\partial_x w_{h_0}$ and $e_1 \wedge w_{h_0}$ converge exponentially to $0$ at $\pm \infty$ (they are also of class $W^{k, 1}$ for all $k \in \mathbb N$).
We begin by some obvious properties on this function.

\begin{lem} \label{lem:obv_prop_F}
    The function $\mathfrak{F}$ is linear in the first variable and of class $\mathscr{C}^\infty$. Moreover, there also holds
    \begin{equation*}
        \mathfrak{F} (m, g) = 
            \begin{pmatrix}
                \int (- g). m \cdot \partial_x w_{h_0} \diff x \\
                \int (-g) . m \cdot (e_1 \wedge w_{h_0}) \diff x
            \end{pmatrix}
             = 
            \begin{pmatrix}
                \int ((- g). m - w_{h_0}) \cdot \partial_x w_{h_0} \diff x \\
                \int ((-g) . m - w_{h_0}) \cdot (e_1 \wedge w_{h_0}) \diff x
            \end{pmatrix}.
    \end{equation*}
\end{lem}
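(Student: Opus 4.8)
The plan is to dispatch the three assertions in order, with essentially all of the content residing in two elementary algebraic identities plus a short regularity argument.

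\textbf{Linearity and the invariance identity.} Linearity in the first variable is immediate, since for fixed $g$ each component of $\mathfrak{F}(\cdot, g)$ is the integral of $m$ against a fixed function. For the first rewriting I would establish the isometry property of the $G$-action with respect to the $L^2$ pairing: for $f \in W^{k,1}$ and $h \in (L^\infty)^3$,
\begin{equation*}
    \int (g.f) \cdot h \diff x = \int f \cdot \bigl( (-g).h \bigr) \diff x .
\end{equation*}
This follows from the substitution $x \mapsto x + y$ together with the orthogonality of $R_\phi$ (so that $R_\phi a \cdot b = a \cdot R_{-\phi} b$) and the identity $(-g).h = R_{-\phi}\tau_{-y} h$. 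Applying it with $f = \partial_x w_{h_0}$ and $h = m$ gives the first component of the middle expression. For the second component I would use that $R_\phi$ fixes $e_1$ and, being a rotation, preserves the cross product, whence
\begin{equation*}
    e_1 \wedge g.w_{h_0} = R_\phi e_1 \wedge R_\phi (\tau_y w_{h_0}) = g.(e_1 \wedge w_{h_0});
\end{equation*}
combining this equivariance with the isometry identity yields the second component.

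\textbf{Removing $w_{h_0}$.} To pass to the last expression I would subtract $w_{h_0}$ inside each integral and observe that the correction terms vanish \emph{pointwise}: we have $w_{h_0} \cdot \partial_x w_{h_0} = \tfrac{1}{2}\partial_x \abs{w_{h_0}}^2 = 0$ since $\abs{w_{h_0}} \equiv 1$, and $w_{h_0} \cdot (e_1 \wedge w_{h_0}) = 0$ because the cross product is orthogonal to $w_{h_0}$. Hence $\int w_{h_0} \cdot \partial_x w_{h_0} \diff x = \int w_{h_0} \cdot (e_1 \wedge w_{h_0}) \diff x = 0$, giving the two displayed equalities.

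\textbf{Smoothness.} Since $\mathfrak{F}$ is linear, hence smooth, in $m$, and the pairing of $(L^\infty)^3$ against $(L^1)^3$ is a bounded bilinear map, it suffices to verify smoothness in $g = (y,\phi)$. The $\phi$-dependence enters only through the entries $\cos\phi, \sin\phi$ of $R_\phi$, which are $\mathscr{C}^\infty$. For the $y$-dependence I would invoke Lemma \ref{lem:w_h_0_exp}: the functions $\partial_x w_{h_0}$ and $e_1 \wedge w_{h_0}$ and all their derivatives decay exponentially, so they lie in $W^{k,1}$ for every $k$; consequently $y \mapsto \tau_y \partial_x w_{h_0}$ and $y \mapsto \tau_y(e_1 \wedge w_{h_0})$ are $\mathscr{C}^\infty$ maps into $(L^1)^3$. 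Pairing against the fixed $m \in (L^\infty)^3$ then yields joint smoothness of $\mathfrak{F}$. The only mildly technical ingredient is this last point, which reduces to the standard fact that $y \mapsto \tau_y f$ is $\mathscr{C}^k$ into $L^1$ whenever $f \in W^{k,1}$; here the exponential decay from Lemma \ref{lem:w_h_0_exp} supplies membership in $W^{k,1}$ for all $k$, so no genuine obstacle remains.
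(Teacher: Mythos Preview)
Your proof is correct and complete. The paper itself provides no proof of this lemma at all---it is stated as ``obvious properties'' and left to the reader---so your detailed argument (isometry of the $G$-action on the $L^2$ pairing, equivariance of the cross product under $R_\phi$, pointwise vanishing of $w_{h_0}\cdot\partial_x w_{h_0}$ and $w_{h_0}\cdot(e_1\wedge w_{h_0})$, and smoothness via the $W^{k,1}$ regularity furnished by Lemma~\ref{lem:w_h_0_exp}) supplies considerably more than the paper does.
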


Then, we show that $\partial_g \mathfrak{F} (w_{h_0}, (0, 0))$ is invertible.

\begin{lem} \label{lem:invertible_diff}
    Writing $g= (y, \phi)$, there holds $\mathfrak{F} (w_{h_0}, (0,0)) = 0$ and
    \begin{equation*}
        \partial_y \mathfrak{F} (w_{h_0}, (0,0)) = 
            \begin{pmatrix}
                - \int \abs{\partial_x w_{h_0}}^2 \diff x \\
                0
            \end{pmatrix},
        \qquad
        \partial_\phi \mathfrak{F} (w_{h_0}, (0,0)) = 
            \begin{pmatrix}
                0 \\
                - \int \abs{e_1 \wedge  w_{h_0}}^2 \diff x
            \end{pmatrix}.
    \end{equation*}
    Therefore, $\partial_g \mathfrak{F} (w_{h_0}, (0,0))$ is invertible.
\end{lem}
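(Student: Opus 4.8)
The plan is to treat Lemma \ref{lem:invertible_diff} as a direct verification, using the symmetric form of $\mathfrak{F}$ from Lemma \ref{lem:obv_prop_F} together with the observation (already recorded in Remark \ref{rem:invariance_link}) that the two infinitesimal generators of the $G$-action at $w_{h_0}$ are exactly $\partial_x w_{h_0}$ and $e_1\wedge w_{h_0}$. First I would check $\mathfrak{F}(w_{h_0},(0,0))=0$. Since $(0,0)$ is the identity of $G$, the two components read $\int w_{h_0}\cdot\partial_x w_{h_0}\diff x$ and $\int w_{h_0}\cdot(e_1\wedge w_{h_0})\diff x$. The first integrand is $\tfrac12\partial_x|w_{h_0}|^2=0$ because $|w_{h_0}|\equiv 1$, and the second vanishes pointwise as a triple product with a repeated vector ($e_1\wedge w_{h_0}$ is orthogonal to $w_{h_0}$). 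Hence both components are zero.

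Next I would compute the Jacobian by differentiating $g\mapsto\mathfrak{F}(w_{h_0},g)$ at $g=(0,0)$, writing $g=(y,\phi)$ and linearising the orbit map. Using $(-g).w_{h_0}=R_{-\phi}\tau_{-y}w_{h_0}$, the expansions $\partial_y(\tau_{-y}w_{h_0})|_{0}=\partial_x w_{h_0}$ and $\partial_\phi(R_{-\phi}v)|_{0}=-e_1\wedge v$ (the generator of rotations about $e_1$ being $v\mapsto e_1\wedge v$) give $(-g).w_{h_0}-w_{h_0}=y\,\partial_x w_{h_0}-\phi\,(e_1\wedge w_{h_0})+O(|g|^2)$. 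Substituting this into the form of Lemma \ref{lem:obv_prop_F} and reading off the coefficients of $y$ and $\phi$ produces the four entries in the statement: a diagonal matrix with diagonal entries $-\int|\partial_x w_{h_0}|^2\diff x$ and $-\int|e_1\wedge w_{h_0}|^2\diff x$. The two off-diagonal entries are $\int \partial_x w_{h_0}\cdot(e_1\wedge w_{h_0})\diff x$, which I would show vanishes: by the identities $\partial_x w_{h_0}=\partial_x\theta_{h_0}\,n_{h_0}$ and $e_1\wedge w_{h_0}=\sin\theta_{h_0}\,e_3$ from Lemma \ref{lem:expand_eta}, and since $n_{h_0}=(-\sin\theta_{h_0},\cos\theta_{h_0},0)$ has no $e_3$-component, the integrand is identically $0$.

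Invertibility is then immediate from the diagonal structure: both diagonal entries are nonzero because $w_{h_0}$ is non-constant, so neither $\partial_x\theta_{h_0}$ nor $\sin\theta_{h_0}$ vanishes identically; concretely $\int|\partial_x w_{h_0}|^2\diff x=\int|\partial_x\theta_{h_0}|^2\diff x>0$ and $\int|e_1\wedge w_{h_0}|^2\diff x=\int\sin^2\theta_{h_0}\diff x>0$. Hence $\partial_g\mathfrak{F}(w_{h_0},(0,0))$ is an invertible $2\times 2$ matrix, which is the claim. The only point requiring genuine care, and the main (but mild) obstacle, is justifying differentiation under the integral sign and the smoothness of the action: this is harmless here since $\partial_x w_{h_0}$ and $e_1\wedge w_{h_0}$ lie in $W^{k,1}$ for every $k$ and decay exponentially (Lemma \ref{lem:w_h_0_exp}), so that $\mathfrak{F}(w_{h_0},\cdot)$ is $\mathscr{C}^\infty$ as already noted in Lemma \ref{lem:obv_prop_F}.
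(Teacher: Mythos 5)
Your proposal is correct in substance and follows essentially the same route as the paper's proof: both rely on the symmetric form of $\mathfrak{F}$ from Lemma \ref{lem:obv_prop_F}, linearize the group action at the identity, kill the off-diagonal entries via the pointwise identity $\partial_x w_{h_0} \cdot (e_1 \wedge w_{h_0}) = (\partial_x \theta_{h_0} \, n_{h_0}) \cdot (\sin \theta_{h_0} \, e_3) \equiv 0$, and conclude invertibility from $\int \abs{\partial_x w_{h_0}}^2 \diff x > 0$ and $\int \abs{e_1 \wedge w_{h_0}}^2 \diff x > 0$. You additionally verify $\mathfrak{F}(w_{h_0},(0,0))=0$ and justify the differentiation under the integral, which the paper leaves implicit.

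One bookkeeping caveat. Your linearization $(-g).w_{h_0}-w_{h_0}=y\,\partial_x w_{h_0}-\phi\,(e_1\wedge w_{h_0})+O(\abs{g}^2)$ is the correct one, but inserting it into the first component of $\mathfrak{F}$ yields $+\int \abs{\partial_x w_{h_0}}^2 \diff x$ as the coefficient of $y$, not the minus sign you claim to read off; only the $\phi$-entry comes out as $-\int \abs{e_1 \wedge w_{h_0}}^2 \diff x$. So your assertion that the expansion produces the four entries exactly as displayed in the statement does not follow from your own computation. This sign tension is in fact inherited from the paper: its stated identities $\partial_y((-g).m)=(-g).\partial_x m$ and $\partial_\phi((-g).m)=(-g).(e_1\wedge m)$ cannot both yield the two displayed minus signs either (and the second identity is itself off by a sign, since $\partial_\phi R_{-\phi}v = -\,e_1 \wedge R_{-\phi}v$). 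The discrepancy is harmless for everything downstream, because the implicit function theorem behind Lemma \ref{lem:modulation} only needs the Jacobian to be diagonal with nonzero entries; still, you should either correct the sign of the $y$-entry in your write-up or remark explicitly that the signs in the statement are immaterial to the conclusion.
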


\begin{proof}
    Since $\partial_y ((-g).m) = (-g).\partial_x m$ and $\partial_\phi ((-g).m) = (-g).(e_1 \wedge m)$, we get
    \begin{equation*}
        \partial_y \mathfrak{F} (w_{h_0}, (0,0)) = 
            \begin{pmatrix}
                - \int \abs{\partial_x w_{h_0}}^2 \diff x \\
                - \int \partial_x w_{h_0} \cdot (e_1 \wedge  w_{h_0})
            \end{pmatrix},
        \qquad
        \partial_\phi \mathfrak{F} (w_{h_0}, (0,0)) = 
            \begin{pmatrix}
                - \int \partial_x w_{h_0} \cdot (e_1 \wedge  w_{h_0}) \\
                - \int \abs{e_1 \wedge  w_{h_0}}^2 \diff x
            \end{pmatrix}.
    \end{equation*}
    On the other hand, we know that $\partial_x w_{h_0} = \partial_x \theta_{h_0} \, n_{h_0}$ and $e_1 \wedge w_{h_0} = \sin \theta_{h_0} \, e_3$, so that $\partial_x w_{h_0} \cdot (e_1 \wedge  w_{h_0}) \equiv 0$ on $\mathbb R$. Last, we know that $\int \abs{\partial_x w_{h_0}}^2 \diff x > 0$ and $\int \abs{e_1 \wedge  w_{h_0}}^2 \diff x > 0$, which leads to the conclusion.
\end{proof}

Last before proving Lemma \ref{lem:modulation}, we show that $g.f$ is close to $f$ when $g$ is close to $(0,0)$.

\begin{lem} \label{lem:diff_g_w_h_0}
    There exists $C > 0$ such that, for any $g \in G$, there holds
    \begin{equation*}
        \norm{g.w_{h_0} - w_{h_0}}_{H^1} \leq C \abs{g}.
    \end{equation*}
\end{lem}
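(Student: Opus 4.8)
The plan is to exploit the fact that both $w_{h_0}$ and any $g.w_{h_0}$ share the same limit $e_1$ at $\pm\infty$, so that although $w_{h_0}\notin H^1$, the difference $g.w_{h_0}-w_{h_0}$ lies in $H^1$ and can be controlled by genuine $H^1$ estimates. Write $g=(y,\phi)$ and set $v\coloneqq w_{h_0}-e_1$. By Lemma \ref{lem:w_h_0_exp}, $v\in H^k$ for every $k$, and in particular $v\in H^2$. Since $R_\phi$ is a rotation about the $e_1$-axis it fixes $e_1$, and the translation $\tau_y$ fixes the constant function $e_1$; hence
\begin{equation*}
    g.w_{h_0}=R_\phi\tau_y(e_1+v)=e_1+R_\phi\tau_y v, \qquad\text{so}\qquad g.w_{h_0}-w_{h_0}=R_\phi\tau_y v-v.
\end{equation*}
This is the conceptual crux: the infinite-energy constant part cancels exactly, reducing the claim to an $H^1$ estimate on $v$.

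Next, I would split by the triangle inequality into a rotation and a translation contribution:
\begin{equation*}
    \norm{R_\phi\tau_y v-v}_{H^1}\leq\norm{(R_\phi-I)\tau_y v}_{H^1}+\norm{\tau_y v-v}_{H^1}.
\end{equation*}
For the rotation term, note that $R_\phi$ acts pointwise on the target $\mathbb{R}^3$ and commutes with $\partial_x$, so that $\norm{(R_\phi-I)u}_{H^1}\leq\norm{R_\phi-I}_{\mathrm{op}}\norm{u}_{H^1}$; since $\tau_y$ is an $H^1$-isometry, this gives $\norm{(R_\phi-I)\tau_y v}_{H^1}\leq\norm{R_\phi-I}_{\mathrm{op}}\norm{v}_{H^1}$. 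Because $\norm{R_\phi-I}_{\mathrm{op}}=2\abs{\sin(\phi/2)}$ is $2\pi$-periodic in $\phi$ and bounded by $\inf_k\abs{\phi+2k\pi}$, this contribution is $\lesssim\inf_k\abs{\phi+2k\pi}\,\norm{v}_{H^1}$.

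For the translation term, I would use the fundamental theorem of calculus, $v(x-y)-v(x)=-\int_0^y\partial_x v(x-s)\diff s$, together with Minkowski's integral inequality, to get $\norm{\tau_y v-v}_{L^2}\leq\abs{y}\norm{\partial_x v}_{L^2}$; applying the same bound to $\partial_x v$ yields $\norm{\partial_x(\tau_y v-v)}_{L^2}\leq\abs{y}\norm{\partial_{xx}v}_{L^2}$, whence $\norm{\tau_y v-v}_{H^1}\leq\abs{y}\norm{v}_{H^2}$, which is finite since $v\in H^2$. Combining the two bounds gives
\begin{equation*}
    \norm{g.w_{h_0}-w_{h_0}}_{H^1}\leq C\Bigl(\inf_k\abs{\phi+2k\pi}+\abs{y}\Bigr)=C\abs{g},
\end{equation*}
with $C$ depending only on $\norm{v}_{H^2}$, which is the claim. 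No serious obstacle is expected here, the estimates being routine; the only point requiring care is the initial reduction, namely observing that $R_\phi$ and $\tau_y$ both preserve the constant $e_1$ so that the difference genuinely lies in $H^1$ and equals $R_\phi\tau_y v - v$ with $v\in H^2$, before any quantitative estimate is attempted.
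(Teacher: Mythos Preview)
Your proof is correct and follows essentially the same approach as the paper: split the gauge action into its translation and rotation parts, bound each separately (translation via the fundamental theorem of calculus and Minkowski, rotation by a direct operator bound), and combine. The only cosmetic difference is that you first subtract the constant $e_1$ to reduce to a genuine $H^2$ function $v=w_{h_0}-e_1$ and then bound $\norm{R_\phi-I}_{\mathrm{op}}$, whereas the paper works directly with the explicit formula $R_\phi w_{h_0}-w_{h_0}=\sin\theta_{h_0}\,(0,\cos\phi-1,\sin\phi)^\top$; your version handles the $2\pi$-periodicity of $\phi$ more explicitly, but the substance is the same.
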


\begin{proof}
    First, we know that, for any $f \in \mathcal{H}^2$ and $y \in \mathbb R$, there holds
    \begin{equation*}
        \norm{\tau_y.f - f}_{H^1} \leq C \abs{y} \norm{\partial_x f}_{H^1}.
    \end{equation*}
    Moreover, using the expression of $w_{h_0}$ in terms of $\theta_{h_0}$, there holds
    \begin{equation*}
        R_\phi . w_{h_0} - w_{h_0} = \sin \theta_{h_0}
            \begin{pmatrix}
                0 \\
                \cos \phi - 1 \\
                \sin \phi
            \end{pmatrix},
    \end{equation*}
    so that
    \begin{equation*}
        \norm{R_\phi . w_{h_0} - w_{h_0}}_{H^1} \leq C \abs{\phi}.
    \end{equation*}
    Therefore,
    \begin{align*}
        \norm{g.w_{h_0} - w_{h_0}}_{H^1} &\leq \norm{\tau_y R_\phi.w_{h_0} - R_\phi.w_{h_0}}_{H^1} + \norm{R_\phi.w_{h_0} - w_{h_0}}_{H^1} \\
            &\leq C \abs{y} \norm{w_{h_0}}_{H^1} + C \abs{\phi} \leq C \abs{g}. \qedhere
    \end{align*}
\end{proof}

\begin{proof}[Proof of Lemma \ref{lem:modulation}]
    By Lemma \ref{lem:invertible_diff}, the implicit function theorem gives some $\tilde \delta_1 > {0}$ {and $C > 0$} such that, for all ${m} \in L^\infty$ satisfying
    \begin{equation*}
        \norm{m - w_{h_0}}_{L^\infty} < \tilde \delta_1,
    \end{equation*}
    there exists a unique $g \in G$ satisfying $\abs{g} {\, \leq \,} C \norm{m - w_{h_0}}_{L^\infty}$ such that
    \begin{equation*}
        \int m \cdot g.\partial_x w_{h_0} \diff x = \int m \cdot (e_1 \wedge g.w_{h_0}) \diff x = 0.
    \end{equation*}
    Then, Lemma \ref{lem:obv_prop_F} implies that $\eta \coloneqq (-g).m - w_{h_0}$ satisfies
    \begin{equation*}
        \int \eta \cdot \partial_x w_{h_0} \diff x = \int \eta \cdot (e_1 \wedge w_{h_0}) \diff x = 0.
    \end{equation*}
    Now, we use the embedding of $H^1$ into $L^\infty$, that is the existence of $C_S > 0$ such that for all $f \in H^1$,
    \begin{equation*}
        \norm{f}_{L^\infty} {\, \leq \,} C_S \norm{f}_{H^1}.
    \end{equation*}
    Therefore, a similar conclusion can be reached if $m - w_{h_0} \in H^1$ satisfies \eqref{eq:m_close_m_h_0} for $\delta_1 \coloneqq \frac{\tilde \delta_1}{C_S}$.
    Last, in such a case, we can estimate $\norm{\eta}_{H^1}$ as follows:
    \begin{align*}
        \norm{\eta}_{H^1} &= \norm{m - g.w_{h_0}}_{H^1} \\
            &\leq \norm{m - w_{h_0}}_{H^1} + \norm{g.w_{h_0} - w_{h_0}}_{H^1} \\
            &\leq \norm{m - w_{h_0}}_{H^1} + C \abs{g} \\
            &\leq \norm{m - w_{h_0}}_{H^1} + C \norm{m - w_{h_0}}_{L^\infty} \\
            &\leq (1 + C C_S) \norm{m - w_{h_0}}_{H^1},
    \end{align*}
    where we used Lemma \ref{lem:diff_g_w_h_0} from the second to the third line.
\end{proof}

\bibliographystyle{abbrv}
\bibliography{sample}

\end{document}